\documentclass[12pt]{amsart}
\usepackage{amssymb, amscd, amsthm, amsfonts, mathrsfs, tikz-cd}
\usepackage{amsmath}
\usepackage{tikz-cd}

\usepackage{etoolbox}
\makeatletter
\patchcmd{\@settitle}{\MakeUppercase}{\relax}{}{}
\patchcmd{\@setauthors}{\MakeUppercase}{\relax}{}{}
\patchcmd{\@setauthors}{\large}{\large}{}{} 
\makeatother

\usepackage{graphicx}
\usepackage{indentfirst}
\usepackage{thmtools,mathtools}
\usepackage{thm-restate}
\usepackage[colorlinks, bookmarks=true, backref=page]{hyperref}
\hypersetup{
	colorlinks,
	citecolor=blue,
	linkcolor=blue,
	urlcolor=blue}
\usepackage{cleveref}
\usepackage{cite}
\usepackage{verbatim}
\usepackage{chngcntr}

\usepackage{autonum}

\numberwithin{equation}{section}

\newtheorem{theorem}{Theorem}[section]
\newtheorem{cor}[theorem]{Corollary}
\newtheorem{lem}[theorem]{Lemma}
\newtheorem{q}[theorem]{Question}
\newtheorem{prop}[theorem]{Proposition}
\theoremstyle{definition}
\newtheorem{exa}[theorem]{Example}
\newtheorem{defi}[theorem]{Definition}
\newtheorem{rem}[theorem]{Remark}

\DeclareMathOperator{\Isom}{Isom}

\DeclareMathOperator{\sff}{\mathrm{I\!I}}

\DeclareMathOperator{\arccosh}{arcosh}

\newcommand{\gl}{\tn{GL}}
\newcommand{\spl}{\tn{SL}}
\newcommand{\so}{\tn{SO}}
\newcommand{\cg}{\tn{CG}}
\newcommand{\almostgeo}{nearly geodesic }
\newcommand{\nege}{nearly geodesic }
\newcommand{\negen}{nearly geodesic}

\newcommand{\N}{\mathbb{N}}

\newcommand{\ag}{asymptotically geodesic }
\newcommand{\agn}{asymptotically geodesic}

\newcommand{\hms }{Hamenstädt }

\newcommand{\gps}{Gromov--Pyatetski-Shapiro }
\newcommand{\cat}{\tn{CAT}}
\newcommand{\oto}{of type I }
\newcommand{\oton}{of type I}
\newcommand{\nar}{non-arithmetic }

\newcommand{\vl}{Vladimir }
\newcommand{\mkv}{Markovi\'c }

\newcommand{\poi}{$\pi_1$-injective }
\newcommand{\poin}{$\pi_1$-injective}

\newcommand{\cbility}{commensurability }
\newcommand{\kms}{Kahn--Markovi\'c--Smilga }

\newcommand{\catzero}{\tn{CAT(0)} }

\newcommand{\km}{Kahn--Markovi\'c }
\newcommand{\totallygeodesic}{totally geodesic }

\newcommand{\area}{\textnormal{Area}}

\newcommand{\inj}{\textnormal{inj}}

\newcommand{\neta}{N_{\eta}}

\newcommand{\ch}{\mathcal{CH}}
\newcommand{\gnmo}{\mathcal{G}_{n}}

\newcommand{\SONO}{\tn{SO}(n,1)^{\circ}}
\newcommand{\SOPO}{\tn{SO}(n+1,1)^{\circ}}

\newcommand{\mhat}{\hat{M}}

\newcommand{\tx}{\tilde{x}}
\newcommand{\tpj}{\widetilde{p_j}}

\newcommand{\tpaij}{\widetilde{\Pi_j}}

\newcommand{\txj}{\widetilde{x_j}}
\newcommand{\tpai}{\widetilde{\Pi}}

\newcommand{\qf}{quasi-Fuchsian }

\newcommand{\gt}{\mathcal{G}_2}
\newcommand{\gtm}{\mathcal{G}_2M}

\newcommand{\tgamma}{\tilde{\gamma}}

\newcommand{\tsj}{\tilde{S}_j}
\newcommand{\tsi}{\tilde{S}_i}
\newcommand{\ts}{\widetilde{S}}
\newcommand{\ws}{\widetilde{S}}
\newcommand{\psltr}{PSL_2(\R)}

\newcommand{\gm}{\mathcal{G}_{n}}

\newcommand{\tm}{\widetilde{M}}

\newcommand{\hy}{\mathbb{H}^3}

\newcommand{\image}{\textnormal{Im}}

\newcommand{\Q}{\mathbb{Q}}
\newcommand{\R}{\mathbb{R}}

\newcommand{\tn}{\textnormal}

\newcommand{\T}{\mathbb{T}}

\newcommand{\vol}{\textnormal{Vol}}
\newcommand{\Z}{\mathbb{Z}}

\newcommand{\lifs}{L^{\infty}(S)}

\newcommand{\pr}{\tn{Pr}}

\newcommand{\hs}{\mathbb{H}^{n+1}}

\newcommand{\im}{\tn{Im}}

\newcommand{\dist}{\textbf{\tn{d}}}

\usepackage{lipsum}
\let\OLDthebibliography\thebibliography
\renewcommand\thebibliography[1]{
	\OLDthebibliography{#1}
	\setlength{\parskip}{1.5pt}
	\setlength{\itemsep}{0pt plus 0.25ex}
}

\title[\large{Asymptotically geodesic hypersurfaces and $\pi_1$ of hyperbolic manifolds}]{\Large Asymptotically geodesic hypersurfaces and the fundamental groups of hyperbolic manifolds}

\author{\large Xiaolong Hans Han}

\address{Center for Mathematics and Interdisciplinary Sciences, Fudan University, Shanghai, 200433, China\newline\indent
Shanghai Institute for Mathematics and Interdisciplinary Sciences (SIMIS), Shanghai, 200433, China}
\email{xhh@simis.cn}
	\author{\large Ruojing Jiang}

\address{Massachusetts Institute of Technology, Department of Mathematics, Cambridge, MA 02139}
\email{ruojingj@mit.edu}

\date{\today}
\begin{document}

\begingroup
\def\uppercasenonmath#1{} 
\maketitle
\endgroup

\begin{abstract}
 We consider closed hypersurfaces smoothly immersed in hyperbolic manifolds up to homotopy and commensurability. 
	We prove that if a closed hyperbolic manifold $M$ contains a sequence of \emph{\agn} hypersurfaces, then $\pi_1(M)$ is virtually special and hence embeds in $\spl_d(\Z)$ for some $d\in \N$. If $M^{n+1}$ ($n\geq 2$) is, in addition, arithmetic of type I, we constructs a sequence 
    of hypersurfaces  which are \ag (but not  totally geodesic), strongly filling, and equidistributing in the $n$-Grassmann bundle over $M$. This partially answers a question of Al Assal--Lowe. As a corollary, for each cocompact arithmetic lattice $\Gamma$ of $\SOPO$ of type I, there exist infinitely many arithmetic and infinitely many non-arithmetic cocompact lattices $H$ of $\SONO$ that admit monomorphisms $\iota_*\colon H\to \Gamma$ which do not extend to a Lie group homomorphism from  $\SONO$ into $\SOPO$.
\end{abstract}

\section{Introduction}
Let $M$ be a closed orientable hyperbolic manifold. All hypersurfaces $S\hookrightarrow M$ are assumed to be closed, orientable, and smoothly immersed. 
We consider hypersurfaces up to homotopy and commensurability.
Rubinstein--Sageev \cite{rsFillingSurfaces} define a hypersurface $S\hookrightarrow M$ to be \emph{strongly filling} if for any $p\neq q \in \partial_{\infty} \hs$ the sphere at infinity, there exists a lift $\tilde{S} \subset \hs$ of $S$ such that $p,q$ belong to different components of $\partial_{\infty} \hs - \partial_{\infty}\tilde{S}$. If $S$ is strongly filling, $S$ is filling. A sequence of hypersurfaces $S_i\hookrightarrow M$ is \emph{asymptotically geodesic } if their second fundamental forms $\sff_{S_i}$ satisfy $\Vert \sff_{S_i}\Vert_{L^\infty(S_i)} \to 0$. 
The superscript $G^\circ$ denote the identity component of a group. Al Assal--Lowe [\citenum{alAsymptoticallyGeodesicsurfaces}, Question 2] ask whether hyperbolic manifolds of dimension $\geq 4$ contains asymptotically geodesic hypersurfaces that are not totally geodesic. 

\begin{theorem}\label{arithmetic of type I contains nice hypersurfaces}
Let $\Gamma$ be a cocompact arithmetic lattice of $\SOPO$ of type I and $M= \Gamma\backslash \hs$. Then there exists a sequence $S_i$ of closed hyperbolic manifolds of dimension $n$ which admit \poi immersions $\iota\colon S_i \hookrightarrow M$ such that 
\begin{enumerate}
	\item $\iota(S_i)$ is strongly filling, asymptotically geodesic, and not homotopic to totally geodesic hypersurfaces; 
	\item $\iota(S_i)$ is equidistributing in the Grassmann bundle $\gm M$. 
\end{enumerate}
Moreover, we can take $\pi_1(S_i)$ as arithmetic or non-arithmetic lattices of $\SONO$. 
\end{theorem}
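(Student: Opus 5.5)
Arithmetic lattices of type I in $\SOPO$ come from quadratic forms $f$ over a totally real field $k$. Such an $M$ therefore contains infinitely many closed, totally geodesic, immersed hypersurfaces $N_j$, one for each rational hyperplane, i.e. each anisotropic $k$-vector $v$ with $f|_{v^\perp}$ of signature $(n,1)$. Passing to finite covers, we may assume these are embedded and that the manifold is as large as needed. The plan is a cut-and-paste, or bending-with-small-angle, construction in the spirit of Gromov--Piatetski-Shapiro and the hybrid/inbreeding constructions, combined with equidistribution of totally geodesic hypersurfaces.

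\textbf{Construction.} Take two totally geodesic hypersurfaces $N, N'$ in $M$ meeting along a totally geodesic codimension-two submanifold $\Sigma$ at a small dihedral angle $\theta$. Small angles are available because arithmeticity gives a dense set of commensurator directions. In a suitable finite cover $\hat M$, cut $N$ and $N'$ along $\Sigma$ and glue half of $N$ to half of $N'$, after arranging that $\Sigma$ separates both. This produces a piecewise totally geodesic closed $n$-manifold $S$ with a corner of angle $\pi-\theta$ along $\Sigma$. Its fundamental group is an amalgam of arithmetic pieces. If $N, N'$ are non-commensurable, the glued group is a non-arithmetic hybrid lattice of $\SONO$ (Gromov--Piatetski-Shapiro). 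If the two halves come from commensurable pieces glued isometrically, it stays arithmetic. Either way $S$ itself carries a hyperbolic structure, since the halves have totally geodesic boundary $\Sigma$.

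\textbf{Smoothing.} Next, smooth the corner in a collar of width $r$ around $\Sigma$. Replace it by a hypersurface whose normal turns by $\theta$ across the collar, giving $\Vert\sff\Vert\lesssim \theta/r$. Using a large normal injectivity radius for $\Sigma$ (obtained by passing to congruence covers), we can take $r$ large. Letting $\theta_i\to 0$ and $r_i\to\infty$ gives asymptotically geodesic $S_i$.

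\textbf{Required properties.}
\begin{itemize}
\item \emph{$\pi_1$-injectivity and the identification $S_i\cong$ closed hyperbolic manifold.} These follow because a piecewise geodesic hypersurface with small bending and long flat pieces is quasi-isometrically embedded. This is a local-to-global argument: its lift is a quasi-plane close to a genuine plane.
\item \emph{Not homotopic to a totally geodesic hypersurface.} The induced representation $\pi_1(S_i)\to\Gamma$ is a bending deformation with nonzero angle. Its limit set is a topological sphere that is not round. Alternatively, if it preserved a hyperplane, Mostow rigidity would force it to be Fuchsian, contradicting the nontrivial bend along $\Sigma$.
\item \emph{Equidistribution in $\gm M$.} Choose $N_i, N'_i$ from sequences of totally geodesic hypersurfaces which themselves equidistribute. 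This holds by Mozes--Shah / Ratner-type results, since there are infinitely many of them in the arithmetic case. Make sure the collar region has volume fraction tending to zero.
\item \emph{Strong filling.} Combine equidistribution with the near-planarity of lifts: for any $p\neq q$ at infinity, some lift of $S_i$ has boundary a near-round sphere separating them. Equidistributing planes approximate every round sphere. For a single $S_i$ rather than a limit, use that $S_i$ already contains a full totally geodesic piece whose lifts are dense in the space of spheres, since its limit sets under $\Gamma$ form a dense set.
\end{itemize}

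\textbf{Main obstacle.} I expect the hardest step to be the combinatorial/arithmetic one: finding, in finite covers, intersecting totally geodesic hypersurfaces with arbitrarily small angle and with $\Sigma$ separating both, while keeping a uniformly large embedded collar. I would first try using the commensurator: pick $g$ close to the identity and set $N'=gN$, then apply separability of geometrically finite subgroups (virtual specialness) to get embeddedness and separation in a finite cover.
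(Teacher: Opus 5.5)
Your architecture is the paper's: cut and reglue two closed totally geodesic hypersurfaces along a codimension-two locus with bending angle tending to $0$, smooth in a collar, and use Gromov--Piatetski-Shapiro and equidistribution of totally geodesic hypersurfaces. Several of your sub-steps are valid alternatives to the paper's:
\begin{itemize}
\item commensurator density instead of Lemma~\ref{lem_angle density} for small angles (for the non-arithmetic case apply $g$ to a hypersurface from another commensurability class, since $gN$ is commensurable with $N$);
\item a local-to-global or nearly-geodesic argument instead of the convex-hull argument of Proposition~\ref{prop_pi1 injectivity};
\item non-roundness of the limit set instead of the area comparison in Proposition~\ref{non totally geodesic}. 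The limit set of $\iota_*\pi_1(S_i)$ contains the limit sets of the two piece subgroups, and neither lies in a round $(n-2)$-sphere, so no round sphere contains both. This also covers $n=2$, where your Mostow variant does not apply.
\end{itemize}
Two steps, however, fail as written.

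\textbf{Equidistribution.} Your $S$ is $A^+\cup_\Sigma B^+$ with $\hat N=A^+\cup_\Sigma A^-$. Up to the collar, $\mu_S$ is therefore a convex combination of $\mu_{A^+}$ and $\mu_{B^+}$. Equidistribution of $N_i,N_i'$ says nothing about a half: $\mu_{\hat N}=t\mu_{A^+}+(1-t)\mu_{A^-}$ can converge to the Liouville measure while $\mu_{A^+}$ does not.

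Nor is ``arranging that $\Sigma$ separates both'' routine. Suppose $\Sigma$ is connected with $[\Sigma]\neq 0$ in $H_{n-1}(\hat N;\mathbb{Q})$, which is exactly what virtual homological injectivity (Theorem~\ref{totally geodesic hypersurface virtual homology}) produces. Then no lift of $\Sigma$ to any finite cover separates: a separating lift is null-homologous, while its pushforward is a nonzero multiple of $[\Sigma]$.

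The paper does the opposite. It makes $L=\hat\Sigma_1\cap\hat\Sigma_2$ connected and \emph{nonseparating} in both, and reglues crosswise. All of $\hat\Sigma_1$ and $\hat\Sigma_2$ are then used, so up to the collar $\mu_{S_i}$ is a convex combination of $\mu_{\Sigma_{i,1}}$ and $\mu_{\Sigma_{i,2}}$. The collar is made negligible by passing to covers of degree $d_i$ in which $L$ lifts with bounded degree $c_i$ (Proposition~\ref{prop_equidistribution}). Your separating picture can only be rescued through the double cover dual to a nonseparating $L$, with a two-component bending locus, and that is the paper's construction.

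\textbf{Strong filling.} Your fix for an individual $S_i$ is false. The lifts of a closed immersed totally geodesic hypersurface form a locally finite family, hence a closed discrete subset of the space of hyperplanes, not a dense one. Moreover, an embedded one is disjoint from closed geodesics in its complement, so no lift separates their endpoints; it is not strongly filling itself.

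What works is your first sentence made uniform, which is the content of Proposition~\ref{ag implies eventually strongly filling}:
\begin{itemize}
\item Translate $[p,q]$ by $\Gamma$ so that it meets a compact fundamental domain.
\item Once $S_i$ is $\epsilon_i$-dense in $\mathcal{G}_nM$ with $\|\sff_{S_i}\|\le k_i$, the cone estimate of Lemma~\ref{controlling distance between k-geodesic and geodesic} gives a lift whose boundary is $C^0$-close, as a parametrized sphere, to the boundary of the hyperplane orthogonal to $[p,q]$.
\item Hence that lift separates $p$ from $q$, uniformly in $(p,q)$ for $i\ge i_0$.
\end{itemize}
Discarding finitely many $S_i$ is all the theorem needs.

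Finally, ``commensurable pieces glued isometrically stay arithmetic'' is false in general. The inbreeding constructions of Agol and Belolipetsky--Thomson glue pieces of a single arithmetic manifold and obtain non-arithmetic manifolds. The arithmetic case therefore needs the gluing isometry to come from the arithmetic group. The paper also asserts this case without further argument.
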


There are infinitely many arithmetic lattices of $\so(2n+1,1)^{\circ}$ \oton; an arithmetic lattice of $\so(2n,1)^{\circ}$ must be \oto (see [\citenum{vsDiscreteGroupsConstantCurvature}, p.221]). Results in \Cref{arithmetic of type I contains nice hypersurfaces} also generalize to non-cocompact arithmetic manifolds which contain a closed totally geodesic hypersurface (such manifolds only exist up to dimension $4$ by Meyer's theorem on quadratic forms).  
We are grateful for Ben Lowe's suggestion for the following. 
\begin{cor}\label{non superrigid}
	Let  $\Gamma$ be a cocompact arithmetic lattice of $\SOPO$ of type I (with $n\geq 2$). Then there exist infinitely many arithmetic and infinitely many non-arithmetic cocompact lattices $H$ of $\SONO$ that admit injective homomorphisms $\iota_*\colon H\to \Gamma$ which do not extend to a Lie group homomorphism of $\SONO$ into $\SOPO$. 
\end{cor}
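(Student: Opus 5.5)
We derive the corollary directly from \Cref{arithmetic of type I contains nice hypersurfaces}. That theorem gives a sequence of closed hyperbolic $n$-manifolds $S_i$ with $\pi_1$-injective immersions $\iota\colon S_i \hookrightarrow M=\Gamma\backslash\hs$. Each $\pi_1(S_i)$ is a cocompact lattice $H_i$ of $\SONO$, and the theorem lets us choose the $H_i$ all arithmetic or all non-arithmetic. The induced maps $\iota_*\colon H_i\to\Gamma$ are injective because the immersions are $\pi_1$-injective.

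There are two things to check. First, infinitely many of the $H_i$ must be pairwise distinct, say up to isomorphism, or up to conjugacy in $\SONO$, which is equivalent by Mostow rigidity since $n\ge 2$. Second, no $\iota_*$ extends to a Lie group homomorphism $\SONO\to\SOPO$.

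\textbf{Non-extension.} Suppose $\rho\colon\SONO\to\SOPO$ is a Lie group homomorphism with $\rho|_{H}=\iota_*$. Since $\iota_*$ is injective and $H$ is Zariski dense in the simple group $\SONO$, the kernel of $\rho$ is a proper normal subgroup, hence finite and central. So $\rho$ is nontrivial with discrete kernel. The Lie algebra map $d\rho\colon\mathfrak{so}(n,1)\to\mathfrak{so}(n+1,1)$ is then injective.

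Up to conjugation, such a $\rho$ has image the stabilizer $\SONO$ of a totally geodesic copy of $\mathbb{H}^n\subset\hs$. Indeed, the image is a noncompact simple subgroup of real rank one, and the representation of $\mathfrak{so}(n,1)$ on $\R^{n+2}$ must be the standard one plus a trivial summand, since that is the only $(n+2)$-dimensional real representation preserving a Lorentzian form with nontrivial image. Consequently $\iota_*(H)=\rho(H)$ stabilizes the hyperplane $\mathbb{H}^n$, and $\rho$ induces an $H$-equivariant isometric embedding $\mathbb{H}^n\to\hs$.

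It follows that the immersion $\iota$ is homotopic to the totally geodesic immersion $H\backslash\mathbb{H}^n\to M$. To see this, take the straight-line homotopy in $\hs$ between the equivariant lift of $\iota$ and the totally geodesic equivariant map. This homotopy is $H$-equivariant because both maps intertwine the same action $\iota_*$. This contradicts clause (1) of \Cref{arithmetic of type I contains nice hypersurfaces}.

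\textbf{Infinitely many distinct lattices.} Since the family is asymptotically geodesic and equidistributing, the volumes $\vol(S_i)$ tend to infinity. Otherwise only finitely many $S_i$ would occur up to isometry, by Wang's finiteness theorem for $n\ge 3$ together with a bounded-area argument for $n=2$. Those finitely many hypersurfaces would then carry finitely many fixed non-geodesic shapes, contradicting $\Vert\sff_{S_i}\Vert_{L^\infty}\to 0$.

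More directly, $\Vert\sff_{S_i}\Vert_{L^\infty}\to0$ while no $S_i$ is homotopic to a totally geodesic hypersurface. If infinitely many $\iota_*$ had the same image subgroup up to conjugacy in $\Gamma$, the corresponding immersions would be homotopic. Their minimal-norm representatives would then have second fundamental forms bounded below, by compactness of a single homotopy class away from the totally geodesic case. So we pass to a subsequence with pairwise non-conjugate images. Since $\Gamma$ contains only finitely many conjugacy classes of subgroups isomorphic to a fixed surface or manifold group that are quasi-Fuchsian with uniformly bounded data, we obtain infinitely many non-isomorphic $H_i$ after refining once more. Running this once with arithmetic $H_i$ and once with non-arithmetic $H_i$ gives both halves of the corollary.

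\textbf{Main obstacle.} The delicate step is the distinctness claim. The cleanest route is to note that the construction behind \Cref{arithmetic of type I contains nice hypersurfaces} makes $\vol(S_i)\to\infty$, by gluing ever larger pieces. With that in hand, distinctness follows immediately from volume, and the general compactness argument above is not needed.
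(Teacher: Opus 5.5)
Your proposal is correct and follows the same skeleton as the paper. An extension of $\iota_*$ is injective with injective differential. Its image preserves a totally geodesic copy of $\mathbb{H}^n$, which gives an $H$-equivariant isometric embedding $\mathbb{H}^n\to\hs$. That embedding produces a totally geodesic immersion homotopic to $\iota$, contradicting clause (1) of \Cref{arithmetic of type I contains nice hypersurfaces} (that is, \Cref{non totally geodesic}).

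\textbf{Middle step.} Here your route genuinely differs.
\begin{itemize}
\item The paper conjugates the extension so that it intertwines the Cartan involutions. Then $\alpha(\SONO)$ is $\theta'$-stable, and its orbit of the basepoint is a totally geodesic symmetric subspace. This works for any embedding of semisimple groups and needs no case analysis.
\item You instead classify the $(n+2)$-dimensional real representations of $\mathfrak{so}(n,1)$ that preserve a Lorentzian form. This is more concrete and hands you the invariant hyperplane directly.
\item Your explicit $H$-equivariant straight-line homotopy supplies a step the paper leaves implicit.
\end{itemize}
The classification is true, but you only assert it, and it needs the low-dimensional exceptions checked:
\begin{itemize}
\item For $n=2$, $\mathrm{Sym}^3\R^2$ preserves only a symplectic form, and $\R^2\oplus\R^2$ only forms of signature $(2,2)$.
\item For $n=3$, $\mathbb{C}^2$ viewed as a real $4$-dimensional representation carries no invariant symmetric form.
\item For $n=6,7$, the (half-)spin representations of complex dimension $8$ are not defined over $\R$.
\end{itemize}

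\textbf{Distinctness.} The paper simply appeals to the construction, and your closing remark is the right way to make this precise. In the proof of \Cref{prop_equidistribution} the covering degrees $d_i$ are at your disposal. So you may arrange $\vol(S_i)\geq d_i\,\vol(\hat{\Sigma}^0_{i,j})\to\infty$, and volume is an isomorphism invariant of the lattices (Mostow rigidity, or Gauss--Bonnet when $n=2$).

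You should discard your first compactness argument, which is wrong as written, for three reasons.
\begin{itemize}
\item Wang's finiteness theorem needs dimension at least $4$. For $n=3$, hyperbolic Dehn filling gives infinitely many closed hyperbolic $3$-manifolds of bounded volume.
\item Bounded area does not give finitely many isometry types of surfaces.
\item In any case, finitely many abstract types of $S_i$ would not bound the number of homotopy classes of immersions $S_i\to M$, so no contradiction with $\Vert\sff_{S_i}\Vert_{L^\infty}\to 0$ follows.
\end{itemize}
In your second argument, the step via \Cref{zeros of spectrum} is sound: infinitely many $S_i$ in one class would force $\sff_{[S]}=0$, hence a totally geodesic representative. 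However, the finiteness of conjugacy classes of subgroups ``with uniformly bounded data'' is too vague to carry weight on its own.
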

Delzant--Gromov \cite{dgCutKahler} proves that complex hyperbolic lattices do not contain quasi-convex codimension-$1$ subgroups; \cite{ckSuperrigidity}  establishes the superrigidity of quaternionic and Cayley hyperbolic lattices which forbids non totally geodesic immersion of hypersurfaces as in \Cref{non superrigid}. By Margulis' Superrigidity Theorem \cite{mgDiscreteSubgroupsNonpositive}, if $\Gamma$ is an irreducible lattice in a connected, center-free, higher--rank semisimple Lie group $G$ with no compact factors, then any homomorphism $\rho\colon \Gamma\to GL_n(\mathbb{R})$ with unbounded, Zariski--dense image virtually extends to a continuous representation of $G$.

 Haglund--Wise [\citenum{hwSpecialCubeComplex}] pioneers the study of \emph{special} cube complexes and define a group $G$ to be \emph{special} if $G$ acts freely and cocompactly on a CAT(0) cube complex $X$, such that the quotient $G\backslash X$ is special. [\citenum{hwSpecialCubeComplex}, Theorem 1.1] points out that a special group $G$ embeds in $\tn{SL}_d(\Z)$ for some $d \in \N$. 
A group $G$ (resp. manifold $M$) \emph{virtually} satisfies a property $P$ if there is a finite-index subgroup $H$ of $G$ (resp. finite cover $\hat{M}$ of $M$) which satisfies $P$. A submanifold $S\hookrightarrow M$ homologically injects if $H_k(S;\Z)\rightarrow H_k(M;\Z)$ is injective for all integer $k\geq 0$.
Our second theorem is the following. 
\begin{theorem}\label{asymptotically geodesic implies virtually special}
	If a closed hyperbolic manifold $M$ contains a sequence of \ag hypersurfaces, then $\pi_1(M)$ acts properly and cocompactly on a \catzero cube complex with one family of hyperplanes, and $\pi_1(M)$ is virtually special. Moreover, every closed hypersurface $S$ with $\sff_{[S]}<1$ is virtually embedded and virtually homologically injects. 
\end{theorem}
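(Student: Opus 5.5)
We follow the Bergeron--Wise cubulation criterion: if for every pair of distinct points $p\neq q\in\partial_\infty\hs$ some quasi-convex codimension-one subgroup of $\pi_1(M)$ has a limit set (a topological $(n-1)$-sphere) separating $p$ from $q$, then $\pi_1(M)$ acts properly and cocompactly on a \catzero cube complex. Since $\pi_1(M)$ is hyperbolic, Agol's theorem then gives virtual specialness, and \cite{hwSpecialCubeComplex} gives the embedding in $\tn{SL}_d(\Z)$. So the plan has three steps: first produce quasi-convex codimension-one subgroups from the asymptotically geodesic sequence, then verify the separation criterion, and finally handle the ``one family of hyperplanes'' refinement and the statement about hypersurfaces with $\sff_{[S]}<1$.

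\emph{Step 1: quasi-convexity.} A hypersurface $S$ with $\Vert\sff_S\Vert_{L^\infty}<1$ has lifts $\tilde S\subset\hs$ that are properly embedded discs. Comparing with horospheres (principal curvatures $<1$), each such lift is a graph over a totally geodesic hyperplane via the normal exponential map. Hence the lift is quasi-isometrically embedded, $\pi_1(S)\to\pi_1(M)$ is injective with quasi-convex image, and $\partial_\infty\tilde S$ is a topological $(n-1)$-sphere. Moreover, $\tilde S$ lies within distance $\epsilon(\Vert\sff\Vert)$ of a totally geodesic hyperplane near any of its points, with $\epsilon\to0$ as $\Vert\sff\Vert\to0$.

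\emph{Step 2: separation, the main obstacle.} Let $p\neq q$, and let $\gamma$ be the geodesic joining them. We must find a lift of some $S_i$ whose boundary sphere separates $p$ from $q$. This requires a lift of $S_i$ crossing $\gamma$ transversally at a definite angle, with $\Vert\sff_{S_i}\Vert$ small enough that the lift stays close to its tangent hyperplane on a scale larger than the angle defect. Such a lift is then a graph over that hyperplane, and its boundary sphere is close to the hyperplane's boundary sphere, so it separates $p$ from $q$.

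The difficulty is guaranteeing a good crossing for an arbitrary direction. My first attempt is the following. The $S_i$ have area tending to infinity, since an asymptotically geodesic sequence that is not eventually totally geodesic cannot stay in a compact family. Pass to a subsequence whose tangent-plane measures converge in $\gm M$. The limit measure is invariant under the $\tn{SO}(n,1)$-frame flow, so it is supported on totally geodesic planes. By Ratner/Shah-type rigidity (or Mozes--Shah), it is a combination of closed totally geodesic hypersurfaces and the Haar measure. In either case the limit yields either a closed totally geodesic hypersurface or dense planes. Since all limit planes are totally geodesic, the $\pi_1(M)$-translates of the limit hyperplanes form a dense or locally finite invariant family. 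If the family is dense, the geodesic $\gamma$ meets some hyperplane transversally near its midpoint, and a nearby piece of $S_i$ with large $i$ inherits this crossing. If the family consists of closed totally geodesic hypersurfaces, Bergeron--Wise style arguments (separation via hyperplanes) apply directly. The case in which $\gamma$ lies nearly inside the lifted hyperplanes is handled by translating along $\gamma$ and using density of the orbit of hyperplanes in the space of hyperplanes crossing $\gamma$.

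\emph{Step 3: refinements.} The single family of hyperplanes comes from using translates of one hypersurface, or one sufficiently far $S_i$, which suffices by a compactness argument over pairs $(p,q)$ on the compact quotient. For a given $S$ with $\sff_{[S]}<1$, $\pi_1(S)$ is quasi-convex by Step 1. Quasi-convex subgroups of virtually special hyperbolic groups are separable (Haglund--Wise), and separability yields virtual embedding via Scott's criterion. Virtual retractions onto quasi-convex subgroups then give virtual homological injectivity.
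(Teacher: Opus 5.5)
\textbf{Genuine gap in Step 2.} Your overall architecture matches the paper: quasi-convexity from the El Emam--Seppi picture, the Bergeron--Wise boundary criterion, Agol's theorem, then separability via Scott's criterion and virtual retractions. The problem is the degenerate branch of your measure dichotomy in Step 2.

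Weak-$*$ limits of the area measures $\mu_{S_i}$ do not see pieces of small area. So nothing in your argument prevents the limit from being $\sum_{k=1}^m c_k\,\mu_{\Sigma_k}$ for finitely many closed totally geodesic hypersurfaces $\Sigma_k$, even when the $S_i$ are pairwise non-commensurable. For example, glue a very high-degree cover of a fixed $\Sigma$ to a bounded piece of another hypersurface meeting it at angle near $\pi$.

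In that branch you write that ``Bergeron--Wise style arguments apply directly'', but they do not:
\begin{itemize}
\item Bergeron--Wise use infinitely many totally geodesic hypersurfaces, supplied by the commensurator of an arithmetic lattice. A non-arithmetic $M$ has only finitely many (Bader--Fisher--Miller--Stover).
\item Finitely many closed totally geodesic hypersurfaces need not separate all pairs of boundary points. For a single embedded $\Sigma$, the endpoints of a geodesic inside a lift $\widetilde{\Sigma}$ are separated by no lift.
\item The subgroup you need, both here and for the ``one family of hyperplanes'' conclusion, is $\pi_1(S_i)$, not $\pi_1(\Sigma_k)$. In this branch you say nothing about lifts of $S_i$.
\end{itemize}
Relatedly, your argument never uses the hypothesis that the $S_i$ are pairwise distinct up to homotopy and commensurability. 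That hypothesis is indispensable: a constant sequence equal to one embedded totally geodesic hypersurface has $\Vert\sff\Vert\to 0$ but is not strongly filling.

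\textbf{The missing idea.} The paper works with supports instead of measures (\Cref{ag implies asymptotically dense}).
\begin{itemize}
\item The sets $\mathcal{G}_nS_i$ are closed and connected, so a subsequence Hausdorff-converges to a closed connected $C\subset\mathcal{G}_nM$.
\item The $k$-cone estimate of \Cref{controlling distance between k-geodesic and geodesic} shows that $C$ is saturated by Grassmann lifts of totally geodesic planes, and that boundary spheres of the corresponding lifts converge.
\item By Ratner--Shah, $C$ is then either all of $\mathcal{G}_nM$ or $\mathcal{G}_n\Sigma$ for a single closed totally geodesic $\Sigma$; connectedness excludes several.
\item In the latter case $S_i$ eventually lies in a thin Grassmannian neighborhood of $\Sigma$. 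Hence it is commensurable to $\Sigma$, contradicting distinctness.
\end{itemize}

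Hausdorff density is also exactly what makes your ``compactness over pairs $(p,q)$'' in Step 3 rigorous. Argue by contradiction with witnesses $(p_j,q_j)$, move a point of the geodesic $\gamma_j$ into a compact fundamental domain, and pass to a limit geodesic. Then density gives lifts of $S_j$ converging to a hyperplane orthogonal to that geodesic (\Cref{ag implies eventually strongly filling}). This yields a single strongly filling $S_i$, and the rest of your plan then goes through.
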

We denote by $[S]$ the class of hypersurfaces homotopic and commensurable to $S$ in $M$. 
\begin{defi}[The principal curvature spectrum]\label{principal curvature spectrum}
	Let $\Lambda_M$ be the functional defined on the set of classes of closed, smoothly immersed hypersurfaces in $M$ by
	\begin{equation}
	\Lambda_M\colon	[S]\mapsto   \inf\limits_{S'\in[S]} \|\sff_{S'}\|_{L^{\infty}}\eqqcolon \sff_{[S]}.
	\end{equation}
We call the image $\im(\Lambda_M)$ \emph{the principal curvature spectrum} of $M$. 
\end{defi}
\Cref{principal curvature spectrum} is implicit in Al Assal-Lowe \cite{alAsymptoticallyGeodesicsurfaces}, and its Question 2 asks about whether there exists a gap at $0$ of $\im(\Lambda_M)$. 

By Thurston's observation [\citenum{lcSmallCurvatureSurface}, Theorem 5.1], if $\sff_{[S]}<1$, then every representative of $[S]$ is \poin. \Cref{zeros of spectrum} proves that if $\sff_{[S]}=0$, then $[S]$ contains a totally geodesic representative. 
 \begin{cor}\label{spectrum and strongly filling}
 	For any closed hyperbolic manifold $M$, there exists a constant $C_M< 1$  such that if $[S]$ satisfies $0<\sff_{[S]}<C_M$, then $[S]$ is strongly filling. Thus if
 	$\image(\Lambda_M) \cap (0,C_M]$ is non-empty, the conclusions of \Cref{asymptotically geodesic implies virtually special} hold. 
 \end{cor}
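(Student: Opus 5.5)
We argue by contradiction and compactness, using the rigidity contained in \Cref{zeros of spectrum}. First we describe what a failure of strong filling means for one lift. Suppose $[S]$ is not strongly filling. Then for some representative $S'$ and some pair $p\neq q\in\partial_\infty\hs$, no lift $\tilde S'$ separates $p$ from $q$. If $\|\sff_{S'}\|_{L^\infty}<1$, each lift $\tilde S'$ is a properly embedded quasi-geodesic hypersurface. Its limit set $\partial_\infty\tilde S'$ is a topological $(n-1)$-sphere that bounds two open discs, and it stays Hausdorff-close to the boundary of a totally geodesic hyperplane, with the error controlled by $\|\sff_{S'}\|_{L^\infty}$. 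This follows from the standard comparison with equidistant hypersurfaces, as in Thurston's observation.

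We now set up the contradiction. Assume no such $C_M$ exists. Then there are classes $[S_i]$ with $0<\sff_{[S_i]}\to 0$, none strongly filling, and we pick representatives $S_i$ with $\|\sff_{S_i}\|_{L^\infty}\to 0$. For each $i$ choose bad points $p_i\neq q_i$ and join them by a geodesic $\gamma_i$. Since $\Gamma=\pi_1(M)$ acts cocompactly, we may translate so that $\gamma_i$ passes through a fixed compact fundamental domain $D$. The lifts of $S_i$ are uniformly quasi-geodesic hypersurfaces, so they meet $D$ in sets that converge in $C^1$ to pieces of totally geodesic hyperplanes. After passing to a subsequence, the geodesics $\gamma_i$ converge to a geodesic $\gamma$ with distinct endpoints $p\neq q$. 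Here the normalization through $D$ is exactly what prevents $p_i$ and $q_i$ from collapsing to a single point.

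The key step is to show that the $S_i$ become dense in $\gm M$ in a way that produces a nearly geodesic lift crossing $\gamma_i$ transversally near $D$. Such a lift would separate $p_i$ from $q_i$ for large $i$, contradicting the choice of $p_i,q_i$.

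The tool for this is the limit behaviour of the $S_i$. Take the limit of the $S_i$ in $\gm M$ as a closed set, or as a limit of normalized area measures. By the $C^1$ convergence above, this limit is a closed set that is invariant under the $\SONO$-action on $\gm M$. Ratner's theorem, or the Mozes--Shah closure argument, says such a set is either all of $\gm M$ or a finite union of closed totally geodesic hypersurfaces.

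\begin{itemize}
\item If the limit is all of $\gm M$, then some lift of $S_i$ passes near a point of $\gamma$ in $D$ with tangent plane nearly orthogonal to $\gamma$. Being nearly geodesic, that lift separates the endpoints of $\gamma_i$, which gives the contradiction.
\item If the limit is a finite union of totally geodesic hypersurfaces $T$, then the $S_i$ are $C^1$-close to covers of $T$. For large $i$ they are homotopic into $T$ through a nearest-point projection argument, so $[S_i]$ would have a totally geodesic representative, contradicting $\sff_{[S_i]}>0$.
\end{itemize}

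Setting $C_M$ to be the threshold at which the contradiction starts, and taking it smaller than $1$, proves the first assertion. The second assertion then follows by feeding the strongly filling $[S]$ into the proof of \Cref{asymptotically geodesic implies virtually special}. That proof needs only one strongly filling quasi-convex codimension-one subgroup, and then applies the Bergeron--Wise criterion via Sageev's construction.

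The main obstacle is the second case of the dichotomy: that $C^1$-closeness to a totally geodesic hypersurface forces a homotopy into it. There the $S_i$ might wrap around $T$ with a small but nonzero twisting that is not homotopically trivial. To handle this we would show that nearest-point projection to the $\epsilon$-neighbourhood of $T$ gives a local diffeomorphism $S_i\to T$ that is a covering map. This would make $\pi_1(S_i)$ conjugate into $\pi_1(T)$, so that the limit set of $\tilde S_i$ is a round sphere and $[S_i]$ contains a totally geodesic representative. We would invoke \Cref{zeros of spectrum} in the same spirit to conclude.
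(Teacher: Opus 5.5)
Your proposal is correct and is essentially the paper's argument. The paper deduces the corollary by contradiction from \Cref{ag implies eventually strongly filling}, whose proof via \Cref{ag implies asymptotically dense} uses the same ingredients you use: the Ratner--Shah dichotomy for the Hausdorff limit of the Gauss lifts, convergence of $\partial\widetilde{S}_i$ to $\partial P$ in the dense case, and then \Cref{strongly filling applications} applied to a strongly filling nearly geodesic class. The only difference is how the closed-orbit case is excluded: you homotope $S_i$ onto a cover of $T$ and contradict $\sff_{[S_i]}>0$, while the paper uses eventual commensurability with $P$ against pairwise distinctness of the classes (distinctness is automatic here, since a class recurring infinitely often would have $\sff_{[S]}=0$).
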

If $M$ is a closed hyperbolic $3$-manifold (resp. compact arithmetic of type I), it follows from \cite{kmImmersingAlmostGeodesic,saMinimalDiscsHyperbolicSpace} (resp. \Cref{arithmetic of type I contains nice hypersurfaces}) that $0$ is an accumulation point of $\image(\Lambda_M).$  
	
In establishing the $\Z$-linearity of $\pi_1(M)$ in \Cref{asymptotically geodesic implies virtually special}, it becomes clear that some of the basic group theoretic properties hold not only for \ag hypersurfaces but also for \emph{\nege} hypersurfaces, i.e., $S \hookrightarrow M$ such that $\|\sff_S\|_{\lifs}<1$. Building on [\citenum{esGaussMapEquivariantImmersion}, Section 4], we prove that for a closed \nege hypersurface $S\hookrightarrow M$, $\pi_1(S)\rightarrow \pi_1(M)$ is injective, quasi-convex, and codimension-$1$, in \Cref{nearly geodesic hypersurfaces}. 
This generalizes \emph{nearly Fuchsian} surfaces in hyperbolic $3$-manifolds, initiated by Uhlenbeck \cite{ukclosedminSurfaceHyperbolic}, for whose many exciting developments we refer to \cite{hlsUniqueness} and \cite{esGaussMapEquivariantImmersion} and the references therein. 

\subsection{Outlines of proofs}
An ingredient of the proof of \Cref{arithmetic of type I contains nice hypersurfaces} is the following. The proof boils down to the polynomial divergence of unipotent flows in $\SONO$ (Ratner, Shah  \cite{rmRatner'stheorem,snRatnerTheorem} and Mozes--Shah \cite{msErgodicInvariantMeasure}).  
\begin{prop}\label{1.2}
	Let $N$ be a finite-volume hyperbolic $(n+1)$-manifold with $n\geq 2$. Then all but finitely many pairs of closed totally geodesic hypersurfaces $\Sigma_1, \Sigma_2$ pairwise intersect into a codimension-$2$ submanifold which is strongly filling in $\Sigma_1$ or $\Sigma_2$. 
\end{prop}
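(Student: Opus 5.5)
The plan is to argue by contradiction using equidistribution of closed orbits of the subgroups $H=\so(n-1,1)^\circ\subset L=\SONO\subset G=\SOPO$ on $G/\Gamma$, where $N=\Gamma\backslash\hs$.

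\textbf{Step 1 (geometric criterion).} A lift of $C=\Sigma_1\cap\Sigma_2$ to $\tilde\Sigma_1\cong\mathbb{H}^n$ is a union of totally geodesic hyperplanes. Fix $\epsilon_0>0$ and a compact core $K\subset\Sigma_1$ (all of $\Sigma_1$ if it is closed). Given $p\neq q\in\partial_\infty\mathbb{H}^n$, the geodesic joining them projects to a geodesic which meets $K$, since geodesics cannot stay in a cusp. Suppose some lift of $C$ contains a hyperplane passing within $\epsilon_0$ of a point of this geodesic and making angle at least $\pi/2-\epsilon_0$ with it. By elementary hyperbolic geometry, that hyperplane separates $p$ from $q$. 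So it suffices to show the following: if the codimension-$1$ Grassmann lift of $C$ in $\mathcal{G}_{n-1}\Sigma_1$ is $\epsilon_0$-dense over $K$, then $C$ is strongly filling in $\Sigma_1$.

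\textbf{Step 2 (reduction).} Suppose infinitely many pairs $(\Sigma_1^k,\Sigma_2^k)$ are bad, i.e.\ $C_k=\Sigma_1^k\cap\Sigma_2^k$ is strongly filling in neither. Either one hypersurface, say $\Sigma_1$, is repeated infinitely often, or both vary. In the first case the $\Sigma_2^k$ are infinitely many distinct closed $L$-orbits. By Ratner's measure classification and Mozes--Shah \cite{msErgodicInvariantMeasure}, applied to the unipotently generated group $L$ (here $n\ge2$ is used), together with non-divergence from Shah's polynomial estimates \cite{snRatnerTheorem}, their normalized measures converge to Haar measure on $G/\Gamma$. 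The only intermediate groups between $L$ and $G$ are $L$ and $G$, so no mass escapes to a proper closed orbit. Restricting to the fixed closed $L$-orbit of $\Sigma_1$, the intersections $C_k$, viewed as $H$-orbits inside $L/(L\cap\Gamma)$, then become equidistributed in $\mathcal{G}_{n-1}\Sigma_1$. Their distinctness follows from distinctness of the $\Sigma_2^k$, up to finitely many coincidences. Step 1 then gives a contradiction for large $k$.

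\textbf{Step 3 (both vary: the main obstacle).} Now the $C_k$ are infinitely many distinct closed $H$-orbits. Mozes--Shah gives that any weak limit is an algebraic probability measure invariant under a group $F\supseteq H$. The intermediate subgroups are $H$, $H\times\so(2)$, conjugates of $L$, and $G$. The delicate point is to exclude limits concentrating on finitely many closed $L$-orbits without equidistributing inside each, and more importantly to upgrade density in $G/\Gamma$ to density \emph{inside the varying} orbit $L\,g_k\Gamma$ of $\Sigma_1^k$, uniformly over $\Sigma_1^k$.

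I would try the following. Lift to frames $(x, T_x C_k, \nu)$ with $\nu$ the normal of $C_k$ inside $\Sigma_1^k$; these form a closed orbit of $H$ (or $H\times\so(2)$) sitting inside the $L$-orbit of $\Sigma_1^k$. Then apply the linearization technique of Dani--Margulis/Mozes--Shah to the pair $H\subset L$ simultaneously. The aim is to show that if the $H$-orbits fail to be $\epsilon_0$-dense in their ambient $L$-orbits, then they accumulate near a tube of a proper intermediate subgroup. Such a tube corresponds to only finitely many totally geodesic hypersurfaces of bounded area, which contradicts distinctness of the pairs. Combining this with Step 1 would then finish the proof.
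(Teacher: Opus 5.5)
\textbf{There is a genuine gap: Step 3 is never carried out.} You flag it yourself as the main obstacle, but the simultaneous linearization for $H\subset L$ inside the varying orbits $Lg_k\Gamma$ is only stated as an aim. As written, the case where both hypersurfaces vary is unproved.

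Step 2 also overreaches. Equidistribution of the $L$-orbits of $\Sigma_2^k$ in $G/\Gamma$ does not by itself give equidistribution of the $H$-orbits $C_k$ inside the fixed orbit of $\Sigma_1$. Moreover, for $n=2$ the group $H=\mathrm{SO}(1,1)^\circ$ is not unipotently generated, so Ratner and Mozes--Shah say nothing about it. Step 1 only needs density, though. Density of $C_k$ in $\mathcal{G}_{n-1}\Sigma_1$ over the thick part follows from $\delta_k$-density of $\mathcal{G}_n\Sigma_2^k$ in $\mathcal{G}_nN$ by transversality: a sheet of $\Sigma_2^k$ that is $\delta_k$-close to $(x,W\oplus\nu_x)$ cuts $\Sigma_1$ near $x$ along a plane near $W$. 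That argument is purely local and does not care whether $\Sigma_1$ is fixed. So the ``uniformity over $\Sigma_1^k$'' you worry about is not a real obstruction, but you do not notice this.

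\textbf{The missing idea, which is how the paper argues, removes Step 3 and the case split entirely.} You only need one member of the pair to be strongly filling in $N$. Closed totally geodesic hypersurfaces trivially form an asymptotically geodesic sequence. So by Proposition~\ref{ag implies eventually strongly filling}, all but finitely many of them are strongly filling in $N$; its proof uses exactly the Ratner--Shah/Mozes--Shah density input you already invoke.

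Now suppose $\Sigma_2$ is strongly filling in $N$, and take $p\neq q\in\partial_\infty\tilde\Sigma_1$. Some lift $\tilde\Sigma_2$ separates $p$ and $q$ in $\partial_\infty\mathbb{H}^{n+1}$. Hence $\tilde\Sigma_2\neq\tilde\Sigma_1$, and $\tilde\Sigma_2$ crosses the geodesic $[p,q]\subset\tilde\Sigma_1$ transversally in one point. Therefore $\tilde\Sigma_1\cap\tilde\Sigma_2$ is a totally geodesic hyperplane of $\tilde\Sigma_1$ crossing $[p,q]$. Its boundary $\partial_\infty\tilde\Sigma_1\cap\partial_\infty\tilde\Sigma_2$ separates $p$ from $q$ in $\partial_\infty\tilde\Sigma_1$.

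It follows that $\Sigma_1\cap\Sigma_2$ is nonempty, has codimension $2$, and is strongly filling in $\Sigma_1$. The only possible bad pairs are those with both members in the finite exceptional set. No equidistribution of codimension-$2$ orbits, no intermediate-subgroup analysis for $H$, and no uniformity over a varying $\Sigma_1^k$ is needed.
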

By Bader-Fisher--Miller--Stover \cite{bfmsTotallyGeodesicSubmanifolds} (for $n\geq 2$) and Margulis--Mohammadi \cite{mmArithmeticityHyperbolic} (for $n=2$), if a hyperbolic manifold $M^{n+1}$ contains infinitely totally geodesic hypersurfaces, then $M$ is arithmetic. 

The proof of \Cref{arithmetic of type I contains nice hypersurfaces} starts by showing that we can take a pair of intersecting totally geodesic hypersurfaces $\Sigma_1$ and $\Sigma_2$ (or a single self-intersecting hypersurface) which contains an intersection locus whose angle is close to $\pi$. By lifting $\Sigma_1$ and $\Sigma_2$ to a finite cover $\hat{M}$, we obtain embedded $\hat{\Sigma}_1$ and $\hat{\Sigma}_2$ with intersection angle around $\pi$ along $L=\hat{\Sigma}_1\cap\hat{\Sigma}_2$ (by [\citenum{bhwHyperplaneSection}, Corollary 1.6]). We cut $\hat{\Sigma}_1$ and $\hat{\Sigma}_2$ along $L$ and reglue them to form a single pleated hypersurface $\hat{S}_0$ with small bending angle. 
We then smooth $\hat{S}_0$ along $L$ to obtain a closed hypersurface $ S$ immersed in $M$. As the angle between $\hat{\Sigma}_1$ and $\hat{\Sigma}_2$ tends to $\pi$, the resulting hypersurfaces have principal curvatures tending to $0$, and hence form a sequence of asymptotically geodesic hypersurfaces. Moreover, we show that these hypersurfaces equidistribute in the Grassmann bundle of $M$. This follows from the equidistribution of totally geodesic hypersurfaces together with the fact that the smoothing procedure produces hypersurfaces whose tangent planes remain close to those of the totally geodesic hypersurfaces. To establish that the hypersurfaces constructed are not homotopic to totally geodesic ones, \Cref{totally geodesic area-minimizing} proves that a totally geodesic immersion $S\hookrightarrow M$ of closed hyperbolic manifold $S$ is area-minimizing in its homotopy class, but our smoothing procedure strictly decreases the area of the pleated hypersurface as in [\citenum{kmsGeometricallyTopologicallyRandomsurface}, Figure 2]. 

An outline of the proof of \Cref{asymptotically geodesic implies virtually special} is the following. If $S_i$ are asymptotically geodesic hypersurfaces, lifting $S_i$ to $\tsi$ in $\hs$ and using $k$-cones centered at $(\tilde{x}_i,\tilde{\Pi}_i)\in \gm \tsi$ (which trap $\tilde{S}_i$ in its interior), we prove that $S_i$ must be asymptotically dense in $\gm M$. 

Consequently, 
 all but finitely many $S_i$ are strongly filling (proved in Proposition~\ref{ag implies eventually strongly filling}). Then the conjugates of a single $\{\pi_1(S_i)\}$ separates any pair of distinct boundary points of $\pi_1(M)$. This yields a proper cocompact action of $\pi_1(M)$ on a CAT(0) cube complex by [\citenum{bwBoundaryCriterionCubulation}, Theorem 1.4], and $\pi_1(M)$ is therefore virtually special by \cite{aiVirtualHakenConj}. 
Thus $\pi_1(M)$ embeds into \emph{right-angled Artin groups (RAAGs)} (a right-angled Artin group is a group presented by generators corresponding to the vertices of a finite graph, with commutation relations only along the edges). Hence, $\pi_1(M)$ is linear over $\mathbb Z$. By adapting ideas in the proof of [\citenum{bhwHyperplaneSection}, Theorem 1.2], all quasi-convex subgroups are virtual retract and hence virtually homologically inject. 

	\subsection{Some histories and motivations}\label{history}
	We give a brief account on results on \ag hypersurfaces, superrigidity, and immersions of hyperbolic manifolds that are relevant to the paper. The proofs of the main theorems rely crucially on the works on special cube complex and virtual retracts in Sageev \cite{smCubeComplex,smCodim1Subgroups}, Haglund-Wise \cite{hwSpecialCubeComplex}, Bergeron-Wise \cite{bwBoundaryCriterionCubulation}, Bergeron-Haglund-Wise \cite{bhwHyperplaneSection}, and Agol \cite{aiVirtualHakenConj}, and we provide much more details in \Cref{from_asy_geod_to_cube_cpl}. 
	
	Leininger  [\citenum{lcSmallCurvatureSurface}, Theorem 4.1] constructs a sequence of \ag surfaces $S_i$ \emph{embedded} in different manifolds $M_i$. Fix a closed hyperbolic $3$-manifold $M^3$. The first author \cite{hxhNearlyGeodesicFilling} proves that \ag surfaces $S_i$ in  $M^3$ are eventually strongly filling and thus must be self-intersecting; Al Assal--Lowe \cite{alAsymptoticallyGeodesicsurfaces} and \cite{hxhNearlyGeodesicFilling} prove that \ag minimal $S_i \hookrightarrow M^3$ are asymptotically dense $\gt S_i \to \gt M^3$. For any $\epsilon>0$, \km \cite{kmImmersingAlmostGeodesic} construct infinitely many $(1+\epsilon)$-\qf surfaces in $M^3$. Building on \cite{kmImmersingAlmostGeodesic} and \hms  \cite{huIncompressibleSurfacesRankOne}, Calegari-Marques-Neves  [\citenum{cmnCountingminimal}, Theorem 4.2] construct in $M^3$ a sequence of asymptotically geodesic surfaces whose limiting measure has a nontrivial Lebesgue component. Labourie \cite{flAsymptoticCounting} shows we can find such surfaces (not necessarily connected) that equidistribute.
	Lowe-Neves \cite{lnMinSurEntropy} show that the surfaces can be taken to be connected.  \kms \cite{kmsGeometricallyTopologicallyRandomsurface} show that sequences of geometrically random asymptotically geodesic surfaces have a nonzero Lebesgue component for any weak-* limit. 
	Al Assal \cite{afLimitsAsymptoticalFuchsian} proves that the limiting measures induced by \ag surfaces in $M^3$ is exactly the space of $\psltr$-invariant measures on $\gtm^3$. Filip--Fisher--Lowe \cite{fflFiniteTotalGeo} proves that a closed negatively curved analytic Riemannian manifold which contains infinitely many totally geodesic hypersurfaces are arithmetic. 
	
	 Let \(S = H\backslash G/K\), \(M = \Gamma \backslash G'/K'\) be symmetric spaces, with \(G,G'\) semisimple Lie groups, center–free, no compact factors, where \(K \leq  G\), \(K'\leq G'\) are maximal compact subgroups, and $H$ and $\Gamma$ are irreducible lattices in $G$ and $G'$, respectively. Margulis superrigidity theorem implies that, if, in addition, $G$ is not isogenous to any group that is of the form $\SONO \times  K$ or $ \tn{SU} (m,1) \times  K$ ($K$ is compact), then for any homomorphism $\phi\colon H  \to  G'$ with $\phi (H)$ Zariski dense in $G'$, $\phi$  extends to a continuous homomorphism $\phi  \colon  G \to  G'$. If there is a continuous embedding $\alpha\colon G \to G'$ extending an injective homomorphism $\iota_*\colon H \to \Gamma$, as outlined in [\citenum{mdArithmeticGroups}, 16.2(iii)], then $\iota\colon S\to M$ corresponds to totally geodesic immersion (more detail is provided in \Cref{proof of 1.2}). This generalizes Mostow's rigidity theorem where $\iota_*$ is an isomorphism and $\iota$ is an isometry. 
	  Margulis arithmeticity theorem \cite{mgSuperrigidity1,mgSuperrigidity2} proves that an irreducible lattice in a semisimple Lie group of real rank at least $2$ is always arithmetic. 

	Corlette \cite{ckSuperrigidity} extends the superrigidity to lattices in quaternionic and Cayley hyperbolic spaces. Johnson--Millson [\citenum{jmDeformationHyperbolic}, 3] show that hyperbolic lattices $H$ in $\SONO$ are not necessarily superrigid in $\SOPO$ by constructing families of deformations.  \Cref{non superrigid} says that superrigidity of hyperbolic lattices $H <\SONO$ in $\SOPO$ can fail for representations who image is in addition required to be contained in a lattice in $\SOPO$, even for representations that are arbitrarily close to being totally geodesic. Delzant--Gromov in [\citenum{dgCutKahler}, Corollary] prove that complex hyperbolic lattices (that are not commensurable to surface subgroups) cannot have codimension-$1$ quasiconvex subgroups. 
	
	Lackenby \cite{lmSurfaceSubgroupsTor} proves that any finitely generated, Kleinian group that contains a finite, non-cyclic subgroup either is finite or virtually free or contains a surface subgroup. This implies that arithmetic Kleinian group contains a surface subgroup. 
	Bergeron--Haglund--Wise [\citenum{bhwHyperplaneSection}, Proposition 9.1] show that  a \gps non-arithmetic cocompact lattice in $\SONO$ virtually embeds as a quasiconvex subgroup of a standard cocompact arithmetic lattice in $\SOPO$. Kolpakov--Reid--Slavich \cite{krsEmbedArithmetic} proves that any arithmetic hyperbolic $n$-manifold of type I can either be embedded into an arithmetic hyperbolic $(n + 1)$-manifold as a totally geodesic submanifold or its universal mod $2$ abelian cover can. 
	Kolpakov--Riolo--Slavich \cite{krsEmbedNonarithmetic} shows that many Gromov--Pyatetski-Shapiro and Agol--Belolipetsky--Thomson inter-bred \nar hyperbolic manifolds embed into higher-dimensional hyperbolic manifolds as codimension-one totally geodesic submanifolds. 

	\Cref{asymptotically geodesic implies virtually special} gives a new proof of [\citenum{bwBoundaryCriterionCubulation}, Theorem 6.2], since arithmetic manifolds \oto contains a sequence of totally geodesic hypersurfaces. Moreover, in the proof of [\citenum{bwBoundaryCriterionCubulation},  Theorem 6.2] which use commensurators, there is no uniform, effective upper bound on the number of subgroups needed to fulfill [\citenum{bwBoundaryCriterionCubulation}, Theorem 1.4], since the proof relies on the compactness of the triple product of $\partial \pi_1(M)$. 
	
	We refer to the introduction of \cite{hxhNearlyGeodesicFilling} for more motivations and history on filling hypersurface and rigidity of unipotent flows. 
	
\textbf{Outline. }	
In \Cref{preliminaries}, we review arithmetic manifolds and totally geodesic hypersurfaces. We then provide some basic backgrounds on cube complex, virtual retracts, strongly filling subgroups, and virtual specialness in  \Cref{from_asy_geod_to_cube_cpl}.
In \Cref{ggt}, we define \nege hypersurfaces in hyperbolic manifolds, generalizing almost Fuchsian surfaces. We also define \ag hypersurfaces, and prove that in a closed hyperbolic manifold, they are asymptotically dense and eventually strongly filling \Cref{ag implies eventually strongly filling}. This establishes \Cref{asymptotically geodesic implies virtually special} and \Cref{spectrum and strongly filling} with the results in \Cref{from_asy_geod_to_cube_cpl}. 
In \Cref{constructions}, we construct non-arithmetic \ag hypersurfaces in arithmetic manifolds of type I, and establish \Cref{arithmetic of type I contains nice hypersurfaces}. 

\tableofcontents

\section{Preliminaries}\label{preliminaries}
If $M$ is a smooth manifold, the Grassmann $d$-plane bundle $\mathcal{G}_dM$ is the set $\{(x, \Pi)|x\in M, \Pi \subset T_xM \tn{ is a }d\tn{-dimensional subspace}\}$, equipped with the natural topology induced from the tangent bundle $TM$. If $M$ has $(n+1)$-dimension, then there is a double covering map from the unit tangent bundle $T^1M $ to $\gm M$. If $M$ is a Riemannian manifold, there is a natural metric on $\gm M$ compatible with the Riemannian metric on $M$, which measures the proximity of the positions and directions of two elements in $\gm M$. For more detail, see e.g., [\citenum{hxhNearlyGeodesicFilling}, 2.2]. 

\subsection{Arithmetic manifolds}
References for arithmetic manifolds include e.g. [\citenum{vsDiscreteGroupsConstantCurvature}, Chapter 6] and \cite{mdArithmeticGroups}. 
Let $K\subset \R$ be a totally real algebraic number field, and $R_K$ the ring of its integers. A non-degenerate quadratic form
$$f(x)=\sum_{i,j=0}^{n}a_{ij}x_ix_j (a_{ij}=a_{ji}\in K)$$
is \emph{admissible} if its negative index is $1$, and for any non-identity embedding $\sigma\colon K \rightarrow \R$, the quadratic form 
$$f^{\sigma}(x)=\sum_{i,j=0}^{n} a_{ij}^{\sigma}x_ix_j$$
is positive definite. Then the group $O'(f,R_K)$ of linear transformations with coefficients in $R_K$ preserving the form $f$ and mapping each connected component of the cone $C=\{x\in \R^{n+1}: f(x)<0\}$ onto itself is a discrete group of isometries of $\mathbb{H}^n$. According to the general theory of arithmetic discrete groups, if $K=\Q$, then $O'(f,R_K)\backslash\mathbb{H}^n$ is finite-volume non-compact; otherwise it is compact. 

Instead of considering the standard lattice $R_K^{n+1}\subset K^{n+1}$, if we consider linear transformations preserving an arbitrary lattice $L\subset K^{n+1}$, the resulting group is commensurable with $O'(f,R_K)$. Considering the lattice $L$ as a quadratic $R_K$-module, with scalar product defined by the form $f$, we denote this group by $O'(L)$. We call $O'(L)$ an \emph{arithmetic lattice \oton}, and $M=O'(L)\backslash \mathbb{H}^n$ an \emph{arithmetic manifold \oton}. 

It is well-known to the experts that an arithmetic real hyperbolic manifold which contains a totally geodesic hypersurface is \oto (following from Tits's classification of algebraic groups, see [\citenum{bhwHyperplaneSection}, 1.1]). 

\begin{lem}[\citenum{vsDiscreteGroupsConstantCurvature}, p.221]
	All arithmetic lattices of $\so(2n,1)$ are \oton. 
\end{lem}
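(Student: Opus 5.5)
The plan is to use the classification of arithmetic lattices in $\so(m,1)$ for $m$ even, so here $m=2n$ and the lattice acts on $\mathbb{H}^{2n}$. By the Weil/Tits classification, an arithmetic lattice $\Gamma<\so(m,1)$ is commensurable, up to conjugation, with $\mathbf{G}(R_K)$. Here $K$ is a totally real number field and $\mathbf{G}$ is an absolutely almost simple $K$-group with $\mathbf{G}(K\otimes_{\sigma}\R)$ isogenous to $\so(m,1)$ at the identity embedding $\sigma=\mathrm{id}$ and compact at every other real place. The first step is to record which $K$-forms of type $B$ (when $m=2n$, $\so(2n+1,\mathbb{C})$ has type $B_n$) exist at all. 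We then check that each of them comes from an admissible quadratic form as in the definition above.

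For type $B_n$ the Dynkin diagram has no nontrivial automorphisms, so every $K$-form is inner. Moreover, since $B_n$ is adjoint and simply connected modulo a center of order $2$, the Galois-cohomological classification gives the following. Every $K$-form of the split adjoint group $\so_{2n+1}$ arises from $H^1(K,\so_{2n+1})$, and this set classifies nondegenerate quadratic forms of dimension $2n+1$ over $K$ up to similarity. Equivalently, $\mathrm{Aut}(\so_{2n+1})=\so_{2n+1}$, and $H^1(K,\mathrm{O}_{2n+1})$ classifies quadratic forms. Hence $\mathbf{G}$ is isogenous to $\so(f)$ for some nondegenerate quadratic form $f$ over $K$ in $2n+1=m+1$ variables.

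The second step is to translate the archimedean conditions. $\so(f^{\sigma})(\R)$ is isomorphic to $\so(p,q)$, where $(p,q)$ is the signature of $f^{\sigma}$. The identity embedding must give $\so(2n,1)$, so after rescaling $f$ by $\pm1$ its negative index is $1$. At every other place the group must be compact, which forces $f^{\sigma}$ to be definite. Multiplying $f$ by a suitable element of $K^\times$ adjusts the signs so that each $f^\sigma$ is positive definite. Such an element exists by weak approximation, or by noting that similarity does not change $\so(f)$. Thus $f$ is admissible.

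The last step is to go from the algebraic group to the lattice. Any arithmetic subgroup of $\so(f)(K)$ is commensurable with $O'(f,R_K)$, or with $O'(L)$ for a lattice $L$. Hence $\Gamma$ is commensurable with $O'(L)$ and is of type I. The main obstacle is the first step, namely justifying that no exotic forms (for example quaternionic or triality ones) occur in odd ambient dimension $2n+1$. I would cite Tits's classification, or the cohomological argument above, rather than reprove it; the paper itself simply cites [\citenum{vsDiscreteGroupsConstantCurvature}, p.221].
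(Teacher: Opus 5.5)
Your proposal is correct. The paper gives no argument for this lemma; it only cites Vinberg--Shvartsman. What you have written is the standard classification argument behind that citation:
\begin{itemize}
\item An absolutely almost simple $K$-group of type $B_n$ has adjoint group a $K$-form of $\mathrm{SO}_{2n+1}$.
\item Since $\mathrm{Aut}(\mathrm{SO}_{2n+1})=\mathrm{SO}_{2n+1}$, these forms are classified by $H^1(K,\mathrm{SO}_{2n+1})$, that is, by $(2n+1)$-dimensional quadratic forms up to similarity.
\item The archimedean conditions, after rescaling, then make the form admissible.
\end{itemize}
Compared with the bare reference, your route shows exactly where the parity is used. For $\mathrm{SO}(2n+1,1)$ (type $D_{n+1}$), the automorphism group is $\mathrm{PGO}_{2n+2}$, with triality added when $n+1=4$. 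Its $H^1$ classifies central simple algebras with orthogonal involution rather than quadratic forms. That is the source of the type II lattices (and the triality exception) in the paper's Remark. In odd degree $2n+1$ such an algebra is forced to be split.

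A few points should be tightened.
\begin{itemize}
\item In the last step, $\mathbf{G}$ is only isogenous to $\mathrm{SO}(f)$; it could be $\mathrm{Spin}(f)$. Before comparing with $O'(f,R_K)$, you need the fact that a surjective $K$-morphism carries arithmetic subgroups to arithmetic subgroups (Borel--Harish-Chandra).
\item That $K$ is totally real needs a line of justification: a complex place would contribute a noncompact factor $\mathbf{G}(\mathbb{C})$.
\item The existence of $\lambda\in K^\times$ with prescribed signs at all real places comes from weak approximation. The observation that similarity does not change $\mathrm{SO}(f)$ only explains why rescaling is harmless, not why such a $\lambda$ exists.
\item The ``main obstacle'' you flag is already settled by your own cohomological step. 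That step leaves no room for exotic forms. It also covers $n=1$, where $B_1=A_1$ and ternary forms correspond to quaternion algebras.
\end{itemize}
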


\begin{exa}[Infinitely many cocompact arithmetic lattices of $\so(n,1)$ \oton]
	Let $d$ be a squarefree positive integer. Consider the admissible quadratic form
	$$f(x)=x_0^2+x_1^2\cdots +x_{n-1}^2-\sqrt{d}x_{n}^2$$
	over the totally real number field $K=\Q(\sqrt{d})$ of degree $2$. Then by [\citenum{awIntroAlgebraicNumberTheory}, Theorem 5.4.2], the ring of integers is
	$$R_K= \begin{cases}
		\Z+\Z\sqrt{d} & \text{if } d \not\equiv 1 (\tn{mod } 4),\\
		\Z+\Z\frac{1+\sqrt{d}}{2}  & \text{if } d \equiv 1 (\tn{mod } 4).
	\end{cases}$$
	Hence, the orthogonal group
	 $$\tn{O}^+ (f, R_K)\coloneqq \{T\in \gl(n+1, R_K): f(Tx)=f(x) \quad\forall x\in \R^n\}$$ 
	has a torsion-free subgroup $H$ of finite index by [\citenum{rjFoundationsHyperbolicManifolds}, Theorem 7.6.7]. Let $M$ be the diagonal matrix with $n+1$ diagonal entries $1, \cdots, 1, d^{-1/4}$. Then $f(Mx)=x_0^2+x_1^2\cdots +x_{n-1}^2-x_{n}^2$ the standard one for all $x\in \R^{n+1}$. Let $G =M^{-1}HM$. Then the space-form $G \backslash \mathbb{H}^n $ is compact for each $n>1$ by [\citenum{rjFoundationsHyperbolicManifolds}, Theorem 12.8.8]. Therefore, there are infinitely compact arithmetic hyperbolic manifolds of type I of dimension $n>1$. 
	
	Given a totally real field $K$, two admissible, $K$-defined quadratic forms $f_1$ and $f_2$ define the same commensurability class of arithmetic hyperbolic lattices if and only if $f$ is equivalent over $K$ to $\lambda\cdot f_2$ for some $\lambda\in K-\{0\}$
(see [\citenum{bbksSubspaceHyperbolicLattices}, Remark 3.2]).
\end{exa}
More sophisticated examples using Salem numbers are e.g. Example 8 of [\citenum{rjFoundationsHyperbolicManifolds}, 12.8]. 
\begin{rem}
	One can construct arithmetic lattices of $\so(2n+1, 1)$ of type II using quaternion algebras, e.g., [\citenum{mdArithmeticGroups}, (6.4.8) Proposition]; [\citenum{mdArithmeticGroups}, (6.4.12) Proposition] then says for $n \notin \{3, 7\}$ type I and II exhaust all the cocompact, arithmetic subgroups of $\so(n,1)$ (up to commensurability and conjugates). 
\end{rem}

\subsection{Tubular neighborhood of totally geodesic hypersurfaces}
Denote the volume of the $(n+1)$-dimensional hyperbolic ball of radius $r$ by, $V_{n+1}(r)$. Let $r\colon \R^+ \rightarrow \R^+$ be the function
$$r(x)=\log\coth\frac{x}{2},$$ 
which monotonically decreases and satisfies $r\circ r = \tn{Id}_{\R^+}$. 

Since $V_{n}(x)$ is an increasing function of $x$, the composition
$(V_{n}\circ r)(x)$ is decreasing and hence has an inverse. 
Define the $(n+1)$-dimensional \emph{tubular neighborhood function} to be
$$c_{n+1}(A)=\frac{1}{2}(V_{n}\circ r)^{-1}(A);$$
$c_{n+1}(A)$ is monotone decreasing in $A$ and goes to zero as $A$ goes to infinity. 

\begin{theorem}[\citenum{baTubularNeighborhoodsTotallyGeodesic}, Theorem 1.1]
	Suppose $M^{n+1}$ is a hyperbolic manifold containing $\Sigma$, an embedded
	closed totally geodesic hypersurface of area $A$. Then $\Sigma$ has a tubular neighborhood of width $c_n(A)$. That is, the set of points
	$$\{x\in M: d(x, \Sigma)<c_n(A)\}$$
	is isometric to the warped product $(-c_n(A),c_n(A)) \times_{\cosh}\Sigma$. 
	Furthermore, any disjoint set of such hypersurfaces have disjoint tubular neighborhoods.
\end{theorem}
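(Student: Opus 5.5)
The plan is to argue in the universal cover. Lift $\Sigma$ to a $\pi_1(M)$-invariant family $\{P_\alpha\}$ of totally geodesic hyperplanes in $\hs$. Embeddedness of $\Sigma$ means these lifts are pairwise disjoint. Closedness of $\Sigma$ means any two of them are at positive distance, and that distance is realized by a common perpendicular geodesic segment. The tube of radius $w$ about $\Sigma$ is isometric to the warped product $(-w,w)\times_{\cosh}\Sigma$ exactly when the $w$-neighborhoods of the $P_\alpha$ are pairwise disjoint. Indeed, in that case the normal exponential map $\Sigma\times(-w,w)\to M$ is injective, and the metric in Fermi coordinates about a totally geodesic hyperplane in $\hs$ is $dt^2+\cosh^2 t\, g_{\Sigma}$. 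So it suffices to show that for $\alpha\neq\beta$ we have $d(P_\alpha,P_\beta)\ge 2c_n(A)$. The same argument applied to the lifts of a disjoint union of such hypersurfaces gives the final sentence of the theorem.

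The key step is an area packing argument. Let $d=d(P_\alpha,P_\beta)$, realized by a common perpendicular from $x\in P_\alpha$ to $y\in P_\beta$. Let $\pi\colon\hs\to P_\alpha$ be orthogonal projection. A standard hyperbolic trigonometry computation (the Lambert quadrilateral / right-angled hexagon formulas) shows that $\pi(P_\beta)$ is a round open ball in $P_\alpha$ centered at $x$ with radius $r(d)=\log\coth\frac{d}{2}$. This is where the function $r$ comes from, and the involution property $r\circ r=\tn{Id}$ reflects the symmetry of this picture.

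Now fix $\alpha$ and consider the $\mathrm{Stab}(P_\alpha)$-orbit of $P_\beta$. Shadows of distinct lifts on the same side of $P_\alpha$ are disjoint. The reason is that a lift whose shadow lies inside another's shadow would be separated from $P_\alpha$ by that other lift, so by disjointness and convexity only the lifts ``visible'' from $P_\alpha$ matter. In particular, I would choose $P_\beta$ to be a nearest lift, so that nothing separates it from $P_\alpha$.

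Choosing $\beta$ to realize the minimal distance $d_0$ between distinct lifts, the ball of radius $r(d_0)$ about $x$ therefore embeds in $\Sigma=\mathrm{Stab}(P_\alpha)\backslash P_\alpha$. Hence $V_n(r(d_0))\le A$. Since $V_n\circ r$ is decreasing, this gives $d_0\ge (V_n\circ r)^{-1}(A)=2c_n(A)$, so the $c_n(A)$-neighborhoods of distinct lifts are disjoint, as needed.

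The main obstacle I expect is justifying that the shadow ball injects into $\Sigma$. Concretely, if $g\in\mathrm{Stab}(P_\alpha)$ moves the shadow of $P_\beta$ to overlap itself, then $gP_\beta$ and $P_\beta$ are both nearest lifts to $P_\alpha$, and overlapping shadows should force them to intersect, contradicting embeddedness. I would try to prove this via a convexity argument on the region between $P_\alpha$ and the two hyperplanes. The orientability and two-sidedness of $\Sigma$ may also need to be handled, using both sides of $P_\alpha$ if necessary.
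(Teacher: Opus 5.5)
The paper gives no proof of this statement; it is quoted from Basmajian. Your proposal is the standard projection (``shadow'') packing argument behind that result, and it produces exactly the constant $c(A):=\tfrac12(V_n\circ r)^{-1}(A)$. In the paper's own normalization this is $c_{n+1}(A)$; the $c_n(A)$ in the statement is an index slip. The outline is sound, but the mechanism you propose for the step you flag as the main obstacle would not work as stated.

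Shadows of disjoint lifts on the same side of $P_\alpha$ need not be disjoint, and overlapping shadows do \emph{not} force $gP_\beta$ and $P_\beta$ to intersect. The correct dichotomy is as follows. Let $H'$ be the closed half-space bounded by a lift $P'$ on the side away from $P_\alpha$. A normal ray from $P_\alpha$ that crosses $P'$ never leaves $H'$. So if the shadows of two disjoint same-side lifts $P',P''$ meet, one lift lies in the far half-space of the other, and the shadows are nested.

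What rules out nesting for $P_\beta$ and $gP_\beta$, with $g\in\mathrm{Stab}(P_\alpha)$, is minimality of $d_0$, not embeddedness. Suppose $P_\beta$ separated $gP_\beta$ from $P_\alpha$ (or vice versa). Then the common perpendicular would cross the intermediate lift, giving a distance at least $2d_0$. But $d(P_\alpha,gP_\beta)=d(gP_\alpha,gP_\beta)=d_0$. You also need $gP_\beta\neq P_\beta$ for $g\neq 1$. Otherwise $g$ preserves the unique common perpendicular, hence fixes $x$, contradicting freeness of the deck action.

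``Same side'' uses two-sidedness of $\Sigma$. A side-reversing $g\in\mathrm{Stab}(P_\alpha)$ carries the shadow ball to a shadow from the opposite side, which can overlap it. In that case you only get an embedding into the orientation double cover, of area $2A$. The product structure in the conclusion already presupposes two-sidedness, and the paper assumes $M$ and its hypersurfaces orientable. So state this as a hypothesis rather than a possible complication.

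For the final sentence, a one-sided estimate is not enough. Take lifts $P$, $Q$ of $\Sigma_1$, $\Sigma_2$ realizing the minimal distance $d$ between the two families. Running the shadow argument on the $\mathrm{Stab}(P)$-orbit of $Q$ gives $V_n(r(d))\le A_1$. Running it on the $\mathrm{Stab}(Q)$-orbit of $P$ gives $V_n(r(d))\le A_2$. Hence $d\ge 2\max\{c(A_1),c(A_2)\}\ge c(A_1)+c(A_2)$, which is what disjointness of the two tubes requires. A bound from one side only, such as $d\ge 2c(A_1)$, falls short of $c(A_1)+c(A_2)$ when $A_1>A_2$.
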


\begin{lem}[\citenum{baTubularNeighborhoodsTotallyGeodesic}, p. 214]
	The hyperbolic metric restricted to the hypersurface of a constant distance $t$ from
	$\mathbb{H}^{n}$ is (by using normal coordinates), 
	\begin{equation}
			\cosh^2(t)\langle \cdot ,\cdot \rangle
	\end{equation}
	where the above inner product $\langle \cdot,\cdot \rangle$ is the $n $-dimensional hyperbolic metric on the fixed copy of $\mathbb{H}^{n}$. 
\end{lem}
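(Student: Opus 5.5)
We verify the claim directly in a model of the normal exponential map. Fix the totally geodesic copy $\mathbb{H}^n\subset\hs$ and let $\nu$ be a unit normal field along it. We use Fermi (normal) coordinates
\begin{equation}
\Phi\colon \mathbb{H}^n\times\R\to\hs,\qquad \Phi(p,t)=\exp_p(t\nu(p)).
\end{equation}
Because $\mathbb{H}^n$ is totally geodesic, these normal geodesics never meet, so $\Phi$ is a diffeomorphism. The level set $\Phi(\mathbb{H}^n\times\{t\})$ is exactly the equidistant hypersurface at distance $t$. The goal is to show that the pullback metric takes the form $dt^2+\cosh^2(t)\,g_{\mathbb{H}^n}$. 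Restricting this to $t=\mathrm{const}$ then gives the statement.

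The plan is a Jacobi field computation. Take $p\in\mathbb{H}^n$ and $v\in T_p\mathbb{H}^n$. The vector $d\Phi_{(p,t)}(v,0)$ equals $J(t)$, where $J$ is the Jacobi field along the normal geodesic $\gamma(t)=\exp_p(t\nu)$ obtained by varying $p$ along a curve tangent to $v$. Its initial data are $J(0)=v$ and $J'(0)=\nabla_v\nu=-S_\nu(v)$. This vanishes because the shape operator of a totally geodesic hypersurface is zero. In constant curvature $-1$ the Jacobi equation reads $J''=J$ for fields normal to $\dot\gamma$, and $J(0)=v$ is indeed normal to $\dot\gamma(0)=\nu$. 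Hence
\begin{equation}
J(t)=\cosh(t)\,E(t),
\end{equation}
where $E$ is the parallel transport of $v$ along $\gamma$.

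Since parallel transport is an isometry, $\langle d\Phi(v,0),d\Phi(w,0)\rangle=\cosh^2(t)\langle v,w\rangle$. The Gauss lemma gives that $\partial_t$ has unit length and is orthogonal to the level sets. Together these show the pullback metric is $dt^2+\cosh^2(t)\langle\cdot,\cdot\rangle$, so the induced metric on the level set at distance $t$ is $\cosh^2(t)\langle\cdot,\cdot\rangle$.

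The main point of care is the orthogonality claim $J(t)\perp\dot\gamma(t)$, together with the fact that $\Phi$ is a global diffeomorphism, so that the coordinates are legitimate on all of $\hs$. The first holds because a Jacobi field with $J(0)\perp\dot\gamma$ and $J'(0)\perp\dot\gamma$ stays normal. The second follows from the hyperboloid model, where $\Phi(p,t)=\cosh(t)\,p+\sinh(t)\,\nu$ with $\nu$ a constant unit spacelike vector. This formula gives an independent check: differentiating in $p$ yields $d\Phi(v,0)=\cosh(t)\,v$ directly, which confirms the factor $\cosh^2(t)$.
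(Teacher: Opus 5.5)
Your proof is correct. The paper gives no argument of its own for this lemma; it quotes it from Basmajian as a standard fact, as background for the tubular-neighborhood theorem stated just before it, so there is no in-paper route to compare against.

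Your Jacobi-field computation ($J(0)=v$, $J'(0)=-S_\nu(v)=0$, $J''=J$, hence $J(t)=\cosh(t)E(t)$), together with $\partial_t$ being unit and orthogonal to the level sets, proves slightly more than the statement. It shows the pullback metric is $dt^2+\cosh^2(t)\,g_{\mathbb{H}^n}$, which is exactly the warped product $(-c,c)\times_{\cosh}\Sigma$ appearing in the quoted tubular-neighborhood theorem. It also uses only curvature $\equiv -1$ and a vanishing shape operator, so it applies verbatim near any totally geodesic hypersurface of a hyperbolic manifold, wherever the normal exponential map is injective.

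Your hyperboloid formula $\Phi(p,t)=\cosh(t)\,p+\sinh(t)\,\nu$ settles global bijectivity: invert by $\sinh t=\langle x,\nu\rangle$ and $p=(x-\sinh(t)\nu)/\cosh t$, with $\langle\cdot,\cdot\rangle$ the Lorentzian form. It also supplies the one point you assert without proof, namely that $\Phi(\mathbb{H}^n\times\{t\})$ is exactly the locus at distance $|t|$ from $\mathbb{H}^n$. This follows since $\sinh d(x,\mathbb{H}^n)=|\langle x,\nu\rangle|$ and $\langle \Phi(p,t),\nu\rangle=\sinh t$.

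For comparison, the quickest classical derivation uses the upper half-space model. The equidistant hypersurfaces of the vertical hyperplane $\{x_1=0\}$ are the Euclidean half-hyperplanes $\{(s\sin\phi,y,s\cos\phi)\}$ through its boundary, with $\cosh t=\sec\phi$. The induced metric there is visibly $\sec^2\phi\,(ds^2+|dy|^2)/s^2$, i.e. $\cosh^2(t)$ times the metric of $\{x_1=0\}$ under the normal projection $(0,y,s)\mapsto(s\sin\phi,y,s\cos\phi)$. That version is shorter but tied to a model; yours is intrinsic and generalizes.
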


[\citenum{hxhNearlyGeodesicFilling}, 2.1] provides some backgrounds on quasi-Fuchsian and minimal surfaces of hyperbolic $3$-manifolds.

\section{Cube complex, virtual retract, and strong fillingness}\label{from_asy_geod_to_cube_cpl}
\subsection{Basics}
We start by recalling some backgrounds on cube complex. 

A \emph{cube complex} is a metric polyhedral complex all of whose cells are unit cubes, i.e., it is the quotient of a disjoint union of copies of unit cubes under an equivalence relation generated by a set of isometric identifications of faces of cubes. 
Let $X$ be a CAT(0) cube complex. We define an equivalence relation on the edges of $X$ generated by identifying opposite edges of some square in $X$. Given an equivalence class $[e]$ of edges, the hyperplane $w$ dual to $[e]$ is the collection of midcubes which intersect edges in $[e]$. The complement $X- w$ consists of two halfspaces. 

Let $G$ be a finitely generated group with a Cayley graph $\Delta$. A subgroup $H\leq G$ is \emph{quasi-convex} if for any $h_1, h_2 \in H$, any geodesic $[h_1,h_2]$ in the Cayley graph of $G$ lies within $k$-neighborhood of $H$. 
	A subgroup $H\leq G$ is \textit{codimension}-$1$ if it has a finite neighborhood $N_r(H)$ such that $\Delta-N_r(H)$ contains at least two $H$-orbits of \textit{deep} components, which do not lie in any $N_s(H)$. In \cite{smCodim1Subgroups}, Sageev commented that an immersed, orientable, incompressible surface in a closed, orientable $3$-manifold corresponds to a codimension-$1$ surface subgroup of its fundamental group. Surface subgroups of $3$-manifolds are well-studied and we lack a unifying picture for hypersurface subgroups of higher-dimensional  manifolds. However, with the same reasoning as Sageev, we conclude that if $M$ is a closed orientable negatively curved manifold and $S$ a closed, orientable \poi hypersurface with $\pi_1(S)$ a quasi-convex subgroup of $\pi_1(M)$, then $\pi_1(S)$ is a codimension-$1$ subgroup of $\pi_1(M)$. This happens because $\pi_1(M)$ is quasi-isometric to the universal cover $\tilde{M}$ and quasi-convexity of $\pi_1(S)$ ensures that $\pi_1(S)\backslash\tilde{M}$ has at least two ends. 
	 
	If $M$ is a closed hyperbolic manifold, $\pi_1(M)$ is a word-hyperbolic group. Since $\pi_1(M)$ is quasi-isometric to the universal cover $\hs$, [\citenum{bhMetricSpaceNonpositive}, 3.9 Theorem] says $\partial \pi_1(M)$ is homeomorphic to $\partial \hs$. 

If $G$ is a finitely generated group with a finite collection of codimension-$1$ subgroups $H_1,\cdots, H_k$, Sageev \cite{smCubeComplex} introduced a powerful construction that produces an action of $G$ on a CAT(0) cube complex $X$ that is dual to a system of walls corresponding to these subgroups. This elucidates many structures of the group. Moreover, he established the following useful criterion for compactness property. 
\begin{theorem}[Sageev, \citenum{smCodim1Subgroups}]\label{cube complex cocompact}
	Let $G$ be a word-hyperbolic group, and $H_1,...,H_k$ be a  collection of quasiconvex codimension-$1$ subgroups. Then the action of $G$ on the 	dual cube complex is cocompact.
\end{theorem}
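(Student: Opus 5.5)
The plan is to follow Sageev's original argument: build the dual cube complex $X$ from the $G$-translates of the walls, then bound the dimension of cubes and the number of $G$-orbits of maximal cubes using hyperbolicity and quasiconvexity. First I would fix the setup. For each $H_i$, codimension-$1$ gives a neighborhood $N_{r}(H_i)$ whose complement has at least two deep components. Choosing one deep component and its complement gives an $H_i$-invariant partition of the Cayley graph $\Delta$ into two halfspaces, whose \emph{wall} lies within bounded distance of $H_i$. The translates $gH_i$ then give a locally finite system of walls, each within uniform distance $R$ of the coset $gH_i$. Sageev's construction produces $X$, with vertices the consistent, almost-principal orientations (ultrafilters) of the walls. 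Cubes correspond to families of pairwise transverse walls, i.e. walls all of whose four quarter-spaces are deep.

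The key geometric step is to show that pairwise transverse walls must pairwise come close. By quasiconvexity and $\delta$-hyperbolicity, if two deep walls $gH_i$ and $g'H_j$ cross, their $R$-neighborhoods intersect: otherwise one wall lies entirely in a halfspace of the other, and one quarter-space is shallow. I would then use the standard fact that a finite family of pairwise close $K$-quasiconvex subsets of a hyperbolic group has a common point within distance $D=D(K,\delta,R)$ of all of them. This is a Helly-type property for quasiconvex sets in hyperbolic spaces, proved by an induction using thin triangles. Consequently, any cube in $X$ corresponds to walls all meeting a ball $B(g,D)$.

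Next, only finitely many walls meet a given ball. This follows from local finiteness: the cosets $gH_i$ meeting $B(1,D)$ are finite in number, since each such coset has a representative in the ball. This bounds the dimension of $X$. Moreover, up to translating by $G$ (moving $g$ to $1$), there are only finitely many possible families of pairwise transverse walls. For a fixed family, the maximal cubes spanned by it differ only by the orientations of the remaining walls. I would then argue that a vertex adjacent to such a cube is determined up to finitely many choices near $B(1,D)$: walls far away must be oriented toward the ball, by consistency with a principal ultrafilter centered near the ball. So each such cube has a $G$-translate among finitely many cubes, which gives cocompactness.

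The main obstacle I expect is this last step. One must show that the vertex ultrafilters of a cube near $B(g,D)$ agree with the principal ultrafilter at $g$ on all but finitely many walls, with a uniform bound. I would handle it by proving that any wall separating a vertex of the cube from the principal vertex $v_g$ must intersect a bounded neighborhood of $g$. The argument is again a quasiconvexity/thin-triangle estimate, combined with the fact that the cube's hyperplanes all pass near $g$.
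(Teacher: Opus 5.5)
The paper gives no proof of this statement; it is quoted from Sageev. Your first three steps are the standard argument: crossing walls have $R$-close cosets, there is a coarse Helly point $g$, and only boundedly many cosets pass near $g$. That gives bounded dimension and finitely many $G$-orbits of pairwise-crossing families.

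Two small corrections there. First, in Sageev's complex, where halfspaces are ordered by inclusion, hyperplanes cross iff all four quarter-spaces are \emph{nonempty}, not deep. Your ``far implies nested'' argument actually yields an empty quarter-space, and it needs only connectedness of $N_R(gH_i)$, not hyperbolicity. Second, the Helly constant $D$ must be independent of the number of sets, since you use it to bound that number. A naive induction on thin triangles does not give this. A uniform proof: fix $o$, let $p_i$ be a nearest point of $N_R(Y_i)$ to $o$, and take $j$ with $d(o,p_j)$ maximal. A geodesic from $o$ to a point of $N_R(Y_i)\cap N_R(Y_j)$ passes uniformly close to both $p_i$ and $p_j$, reaching $p_i$ no later (up to a constant). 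Hence $p_j$ is uniformly close to every $Y_i$.

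The genuine gap is your final step. You claim every wall separating a vertex of the cube from $v_g$ meets a bounded neighbourhood of $g$. This cannot follow from thin triangles plus ``the cube's hyperplanes pass near $g$'', because it is false for non-maximal cubes. Take a surface group with $H_1=\langle a\rangle$ and $H_2=\langle b\rangle$, where $a,b$ are closed curves meeting once, and let $e$ be an edge dual to the wall of $H_1$. The edge $a^m e$ still has its only hyperplane running along $H_1\ni 1$. Yet it is separated from $v_1$ by the walls of roughly $m$ cosets $a^kH_2$.

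What is missing is a structural fact about maximal cubes. A maximal cube $C$ with hyperplane set $F$ is the \emph{unique} cube whose hyperplane set is $F$, and no hyperplane outside $F$ crosses all of $F$. For uniqueness: if $C\neq C'$ have the same $F$, any wall outside $F$ separating them crosses every $f\in F$. Choose such a wall whose halfspace containing $C$ is minimal. Flipping it at each vertex of $C$ stays consistent, so $C$ is a face of a larger cube. The second assertion then follows from uniqueness together with Sageev's theorem that pairwise crossing hyperplanes span a cube.

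With this fact your obstacle disappears. Your remark that maximal cubes with a given family ``differ by orientations of the remaining walls'' becomes moot. Each of the finitely many families up to translation supports at most one maximal cube, and by finite dimensionality every cube is a face of a maximal one.

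If you prefer your ultrafilter route, maximality is exactly what rescues it. Suppose $w\notin F$ and $v(w)\neq v_g(w)$ for a vertex $v$ of $C$. Then $w$ fails to cross some $f\in F$. Both $v$ and $v$ flipped at $f$ orient $w$ away from $g$, so consistency forces the halfspace of $w$ containing $g$ into a single halfspace of $f$. Since $g$ lies within $D$ of the wall of $f$, it lies within $D+1$ of the wall of $w$.
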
 
The important properness criterion for the action on cube complex is the following. 
\begin{theorem}[\citenum{bwBoundaryCriterionCubulation}, Theorem 1.4]\label{bergeron-wise}
	Let $G$ be word-hyperbolic. Suppose for each pair of distinct points $(u,v) \in (\partial G)^2$ there exists a quasiconvex codimension-$1$ subgroup $H$ such that $u$ and $v$ lie in the $H$-distinct components of $\partial G-\partial H$. 
	
	Then there is a finite collection $\{H_1, \cdots, H_k\}$ of quasiconvex codimension-$1$ subgroups such that $G$ acts properly and cocompactly on the dual CAT(0) cube complex. 
\end{theorem}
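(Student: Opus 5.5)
This is the Bergeron--Wise boundary criterion, which the paper cites from \cite{bwBoundaryCriterionCubulation}. My plan is to deduce it from Sageev's construction together with \Cref{cube complex cocompact}, with a compactness argument supplying the finiteness.

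\textbf{Step 1: make the separation condition open.} For each pair $(u,v)$ of distinct boundary points, fix a quasiconvex codimension-$1$ subgroup $H_{u,v}$ as in the hypothesis. Since $H_{u,v}$ is quasiconvex, $\partial H_{u,v}$ is a closed subset of $\partial G$. Hence the two distinct components of $\partial G-\partial H_{u,v}$ containing $u$ and $v$ give open neighborhoods $U\ni u$ and $V\ni v$. For every $(u',v')\in U\times V$, the points $u'$ and $v'$ are still separated by $H_{u,v}$.

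\textbf{Step 2: extract finitely many subgroups.} The space of distinct pairs $\partial G\times\partial G-\Delta$ is not compact, so a direct compactness argument on pairs fails. I will instead work with triples, following Bergeron--Wise, and use that $G$ acts cocompactly on the space $\Theta$ of distinct triples in $\partial G$.

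\begin{enumerate}
\item Choose a compact fundamental domain $K\subset\Theta$ for this action.
\item For each $(a,b,c)\in K$, pick a subgroup separating $a$ from $b$.
\item Cover $K$ by finitely many open sets of the form in Step 1, and let $H_1,\dots,H_k$ be the resulting finite collection.
\item Given any distinct $u,v$, pick a third point $w$ and translate the triple $(u,v,w)$ into $K$ by some $g\in G$. Then $u$ and $v$ are separated by a conjugate $g^{-1}H_ig$.
\end{enumerate}

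So conjugates of finitely many $H_i$ separate every pair of distinct boundary points.

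\textbf{Step 3: cocompactness and properness.} Apply Sageev's construction to the walls given by all $G$-translates of the $H_i$. This produces an action of $G$ on a CAT(0) cube complex $X$, which is cocompact by \Cref{cube complex cocompact}.

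For properness, I will show that the number of walls separating $1$ from $g$ grows linearly in $|g|$. Equivalently, there is no sequence $g_n\to\infty$ along which $g_n$ stays at bounded cube distance from $1$.

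Suppose such a sequence exists. Pass to a subsequence with $g_n\to u$ and $g_n^{-1}\to v$ in $G\cup\partial G$.

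\begin{itemize}
\item If $u\ne v$, Step 2 gives a wall separating $u$ from $v$. Its translates by powers of a loxodromic element with endpoints near $u$ and $v$ then separate $1$ from $g_n$ unboundedly often, a contradiction.
\item In the case $u=v$, I will use a separating wall for the pair $(u,w)$ with $w\neq u$, combined with quasiconvexity, to produce many disjoint translates crossing a geodesic $[1,g_n]$.
\end{itemize}

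\textbf{Main obstacle.} The hardest part is this linear lower bound on the number of separating walls. It requires quantifying how far apart the "deep" halfspaces of the separating walls sit along a geodesic. Quasiconvexity of the $H_i$, with a uniform constant over the finitely many $H_i$, is what should give a uniform fraction of walls crossed per unit length of $[1,g_n]$. I would try first to make this argument precise along geodesic rays.
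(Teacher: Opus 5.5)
\textbf{There is a genuine gap, at properness.} The paper does not prove this statement; it quotes Bergeron--Wise and only remarks that $k$ comes from compactness of the space of distinct triples, which is your Step~2. Your Steps~1--2 and the cocompactness via \Cref{cube complex cocompact} are fine, with three small caveats:
\begin{enumerate}
\item In Step~1 the open sets should be the limit sets of $H$-orbits of deep components, not topological components, since $\partial G$ need not be locally connected.
\item Step~2 needs $G$ non-elementary; the elementary case is trivial.
\item Linear growth of the wall count is stronger than properness, not equivalent to it; divergence is enough.
\end{enumerate}

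\textbf{Why your Step~3 sketch fails.} Properness is the real content of the theorem, and the sketch does not establish it.
\begin{enumerate}
\item In the case $u\neq v$, translates of one wall $W$ by powers of a \emph{fixed} loxodromic $\ell$ with $\ell^{\pm}$ merely near $u,v$ cannot separate $1$ from $g_n$ unboundedly often. By north--south dynamics, $\ell^k\partial W\to\ell^{+}$ as $k\to+\infty$ and $\ell^k\partial W\to\ell^{-}$ as $k\to-\infty$. So when $u\neq\ell^{+}$, for $|k|$ large both $1$ and $u$ lie on the same side of $\ell^kW$. Hence for $n$ large only a bounded number of $k$, independent of $n$, give walls separating $1$ from $g_n$.
\item The dichotomy $u=v$ versus $u\neq v$ is not the relevant one, since $v=\lim g_n^{-1}$ carries no information about where walls cross $[1,g_n]$.
\item The case $u=v$ is not argued at all.
\item Uniform quasiconvexity constants alone give no uniform density of cuts along geodesics. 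That uniformity must come from the separation hypothesis through a second compactness argument, on geodesics rather than triples.
\end{enumerate}

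\textbf{How to close it: recentre along the segment.} Suppose at most $N$ walls separate $1$ from $g_n$ while $|g_n|\to\infty$.
\begin{enumerate}
\item Pick vertices $p_n^{(1)},\dots,p_n^{(N+1)}$ on $[1,g_n]$ whose mutual distances, and distances to the endpoints, tend to $\infty$.
\item By Arzel\`a--Ascoli, $(p_n^{(j)})^{-1}[1,g_n]$ subconverges to a bi-infinite geodesic through $1$ with endpoints $a_j\neq b_j$.
\item Step~2 gives a wall $W_j$ separating $a_j$ from $b_j$. Since $(p_n^{(j)})^{-1}\to a_j$ and $(p_n^{(j)})^{-1}g_n\to b_j$ with $a_j,b_j\notin\partial W_j$, the wall $p_n^{(j)}W_j$ separates $1$ from $g_n$ for $n$ large.
\item Quasiconvexity of $W_j$, together with $a_j,b_j\notin\partial W_j$, shows that $p_n^{(j)}W_j$ comes near $[1,g_n]$ only within a distance of $p_n^{(j)}$ bounded independently of $n$.
\item Hence these $N+1$ walls are distinct, which contradicts the bound $N$.
\end{enumerate}

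\textbf{Alternative fix.} For each infinite-order $g$, Step~2 gives a wall $W$ separating $g^{\pm}$, so some power satisfies $g^mW^{+}\subsetneq W^{+}$. A vertex fixed by $g$ would then contain an infinite descending chain of halfspaces, which Sageev's DCC forbids. So vertex stabilisers have no infinite-order elements and are therefore finite. Cocompactness then gives local finiteness and properness. This is where a loxodromic-translate argument genuinely works: the wall is transverse to $g$'s own axis, not to an axis merely near $(v,u)$.
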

The number $k$ in \Cref{bergeron-wise} depends on the compactness of the triple product $(\partial G)^3\coloneqq \{(u,v,w): u,v,w \in \partial G, u\neq v\neq w\neq u\}$ and thus there is no general method estimating $k$. 
\begin{defi}[Strongly filling subgroups]
	Let $G$ be a word-hyperbolic group and $H$ a quasiconvex, codimension-$1$ subgroup. We call $H$ a \emph{strongly filling subgroup} if for any $p,q \in \partial G$, there exists a $g\in G$ such that $p, q$ lie in $H$-distinct components of $\partial G-\partial gHg^{-1}$. 
\end{defi}
A \nege  hypersurface $S\hookrightarrow M$ is one such that $\|\sff_S\|_{L^\infty(S)} <1$. 
\Cref{nearly geodesic hypersurfaces} proves that $\pi_1(S)\rightarrow \pi_1(M)$ is injective, quasiconvex, and codimension-$1$ subgroup. 
\begin{cor}\label{strongly filling hypersurface give sf groups}
	Let $S$ be a \negen, strongly filling hypersurface of a closed hyperbolic manifold $M$. Then $\pi_1(S)$ is a strongly filling subgroup of $\pi_1(M)$. 
\end{cor}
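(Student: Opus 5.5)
We combine the forward-referenced result \Cref{nearly geodesic hypersurfaces} with the identification of $\partial\pi_1(M)$ and $\partial_\infty\hs$. We then translate the geometric separation in the definition of a strongly filling hypersurface into the group-theoretic separation in the definition of a strongly filling subgroup.

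\textbf{Step 1.} Fix a basepoint and a lift $\tilde S\subset\hs$ of the immersion, invariant under $H\coloneqq\iota_*\pi_1(S)\le G\coloneqq\pi_1(M)$. By \Cref{nearly geodesic hypersurfaces}, since $\|\sff_S\|_{L^\infty}<1$, the map $\pi_1(S)\to G$ is injective and $H$ is quasiconvex and codimension-$1$. In fact $\tilde S$ is a properly embedded quasi-isometrically embedded hypersurface: its Gauss map, or the normal-flow argument of \cite{esGaussMapEquivariantImmersion}, shows it is the graph over a topological sphere $\partial_\infty\tilde S$.

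\textbf{Step 2.} Fix an orbit map $G\to\hs$, which is a quasi-isometry. By [\citenum{bhMetricSpaceNonpositive}, 3.9 Theorem] it extends to a $G$-equivariant homeomorphism $\phi\colon\partial G\to\partial_\infty\hs$. Since $H$ is quasiconvex and $H$ acts cocompactly on $\tilde S$, the limit set satisfies $\phi(\partial H)=\Lambda H=\partial_\infty\tilde S$. Equivariance gives $\phi(\partial(gHg^{-1}))=g\,\partial_\infty\tilde S=\partial_\infty(g\tilde S)$. Every lift of $S$ has the form $g\tilde S$, so as $g$ ranges over $G$ we obtain all lifts.

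\textbf{Step 3.} Let $p\ne q\in\partial G$. Strong filling of $S$ gives a lift $g\tilde S$ such that $\phi(p)$ and $\phi(q)$ lie in different components of $\partial_\infty\hs-\partial_\infty(g\tilde S)$. Pulling back by $\phi$, the points $p$ and $q$ lie in different components of $\partial G-\partial(gHg^{-1})$.

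It remains to upgrade "different components" to "$gHg^{-1}$-distinct components". Since $\partial_\infty(g\tilde S)$ is a tamely embedded $(n-1)$-sphere in $S^n$ (tame because it is a quasisphere: the boundary of a quasiconvex hypersurface with $\sff<1$ is a quasiconformal image of a round sphere), its complement has exactly two components, the two sides of $g\tilde S$. If $S$ is two-sided, which holds since $S$ and $M$ are orientable, $gHg^{-1}$ preserves each side. The components containing $p$ and $q$ are therefore not translates of each other by $gHg^{-1}$, which is exactly the required condition.

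\textbf{Main obstacle.} The delicate point is Step 2's identification $\phi(\partial H)=\partial_\infty\tilde S$ together with the two-component statement used in Step 3. I would handle these by quoting the Gauss-map description from \Cref{nearly geodesic hypersurfaces}. That description shows the hyperbolic Gauss maps $\tilde S\to\partial_\infty\hs$ are embeddings onto the two complementary open discs, so $\partial_\infty\hs-\partial_\infty\tilde S$ visibly consists of two $H$-invariant components.
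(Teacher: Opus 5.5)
Your proposal is correct and follows essentially the paper's route. The paper states this corollary as an immediate consequence of \Cref{nearly geodesic hypersurfaces} (injectivity, quasiconvexity, codimension-$1$) together with the identification $\partial\pi_1(M)\cong\partial_\infty\hs$, and your Steps 1--3, including the two-sidedness argument for $gHg^{-1}$-distinctness, simply spell this out.

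The one thing to drop is the parenthetical claim that $\partial_\infty(g\tilde S)$ is a tame quasisphere. It is unjustified: for $n\ge 3$, Tukia's theorem would make it force $\pi_1(S)$ to be a uniform lattice in $\mathrm{Isom}(\mathbb{H}^n)$, which nothing in the hypotheses guarantees. It is also unnecessary. The Jordan--Brouwer separation theorem already gives exactly two complementary components for any topologically embedded $(n-1)$-sphere in $S^n$. Moreover, \Cref{lemma_embeddedness}(3) together with the equivariant Gauss maps gives you the two $H$-invariant components you need.
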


\begin{theorem}[\citenum{aiVirtualHakenConj}, Theorem 1.1]\label{virtual haken conjecture}
	If $G$ is a word-hyperbolic group acting properly and cocompactly on a CAT(0) cube complex, then $G$ is virtually special. 
\end{theorem}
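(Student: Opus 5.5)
This statement is quoted from \cite{aiVirtualHakenConj} and is used here as a black box. Still, here is the route I would follow to prove it, which is essentially Agol's. Let $X$ be the CAT(0) cube complex and $Y=G\backslash X$, which is compact after passing to a torsion-free finite-index subgroup if needed. By Haglund--Wise \cite{hwSpecialCubeComplex}, it suffices to find a finite cover $\hat{Y}\to Y$ whose hyperplanes have none of the four pathologies:
\begin{enumerate}
\item one-sidedness;
\item self-intersection;
\item direct self-osculation;
\item inter-osculation.
\end{enumerate}
Haglund--Wise also show this is equivalent to separability of the hyperplane stabilizers and of their double cosets. Since $G$ is word-hyperbolic, the hyperplane stabilizers are quasiconvex. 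So the goal becomes: \emph{in a hyperbolic cubulated group, quasiconvex subgroups, in particular hyperplane stabilizers, are separable}. I would establish this in a virtual form, namely that $G$ has a finite-index subgroup that is virtually compact special.

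\textbf{Step 1 (induction on height).} Let $H_1,\dots,H_k$ be the hyperplane stabilizers. Each $H_i$ acts properly and cocompactly on its hyperplane, which is a lower-dimensional CAT(0) cube complex. By induction on dimension, each $H_i$ is virtually compact special. Next I would use Wise's Malnormal Special Quotient Theorem in the hyperbolic-relatively-hyperbolic form of Agol--Groves--Manning. It gives deep enough finite-index subgroups $\dot H_i<H_i$ such that the quotient $\bar G=G/\langle\!\langle \dot H_1,\dots,\dot H_k\rangle\!\rangle$ is hyperbolic. In $\bar G$ the images of the $H_i$ are finite, and the images of quasiconvex subgroups stay quasiconvex. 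One runs a second induction on the height of the collection $\{H_i\}$, and the quotient strictly lowers this height. Hence a finite-index subgroup of $\bar G$ acts on a cube complex with finite hyperplane stabilizers, so it is virtually free and in particular virtually special.

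\textbf{Step 2 (the main obstacle).} I must lift the virtual specialness of $\bar G$ back to $G$. Here I would follow Agol. First, colour the hyperplanes of $Y$ so that the cover built from a finite quotient of $\bar G$ acquires a \emph{hierarchy}: cutting successively along embedded, two-sided, non-self-osculating families of hyperplanes terminates in pieces that are virtually special by induction. The key input is that the finite cover kills all the ``bad'' elements. These are the elements of $G$ that witness pathologies, which, by Gromov--Thurston-type Dehn filling estimates, survive only in finitely many double cosets. With that input one applies Wise's Quasiconvex Hierarchy Theorem: a hyperbolic group with a quasiconvex hierarchy terminating in virtually special pieces is virtually special.

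The delicate point, and the one I expect to be hardest, is making the hierarchy \emph{quasiconvex}, i.e.\ arranging the colourings uniformly. This needs a Ramsey- or random-colouring argument over the finitely many hyperplane orbits, together with the fact that filling kernels are free products of conjugates of the $\dot H_i$, which keeps the filling compatible with the cubical structure.
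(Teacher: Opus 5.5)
The paper gives no proof of this statement. It quotes it from Agol and uses it as a black box, so the only question is whether your sketch of Agol's argument holds up. Your architecture is right: induction on dimension so the hyperplane stabilizers are virtually special, an Agol--Groves--Manning filling $\bar G=G/K$ with $K=\langle\!\langle \dot H_1,\dots,\dot H_k\rangle\!\rangle$ in which the $H_i$ have finite image, a colouring of hyperplanes in a finite cover, and Wise's quasiconvex hierarchy theorem. But the bridge between the filling and the finite cover is broken.

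\textbf{The virtually-free claim fails.} You claim that (a finite-index subgroup of) $\bar G$ ``acts on a cube complex with finite hyperplane stabilizers, so it is virtually free.'' The natural complex on which $\bar G$ acts is $K\backslash X$. This is not CAT(0): its fundamental group is the filling kernel $K$, its hyperplanes need not separate, and finite hyperplane stabilizers then give no splitting over finite groups.

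The implication is false for complexes that are not simply connected. Wedge a one-square torus at every vertex of a Cayley graph of a closed surface group. The result is a nonpositively curved cube complex with compact hyperplanes and trivial hyperplane stabilizers. A group that is not virtually free acts on it properly and cocompactly.

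In fact $\bar G$ is not known to be residually finite at all. Weak separation only yields a \emph{hyperbolic} quotient, and Wise's MSQT, which would give virtual specialness of $\bar G$, presupposes that $G$ is virtually special. The unavailability of residual finiteness for such quotients is exactly what the rest of Agol's proof is built to get around. So your Step~2, which builds the cover ``from a finite quotient of $\bar G$'', has nothing to start from.

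Two smaller points. Passing to a torsion-free finite-index subgroup is not available a priori, since virtual torsion-freeness is a consequence of the theorem. Also, the finiteness of the ``bad'' double cosets $H_igH_j$ comes from cocompactness, not from Dehn filling estimates; the filling is what keeps these elements out of the images of the $H_i$.

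\textbf{What Agol does instead.} He obtains a finite cover without any finite quotient of $\bar G$.
\begin{enumerate}
\item The filling is used only to make the hyperplanes of $K\backslash X$ compact. Then the graph of colliding (crossing or osculating) hyperplanes there has bounded valence, and distinct colliding hyperplanes of $X$ stay distinct in the quotient.
\item A $\bar G$-equivariant random colouring of this bounded-valence graph (for instance, greedy colouring driven by i.i.d.\ labels) gives a $G$-invariant probability measure on colourings of the hyperplanes of $X$ in which colliding hyperplanes get different colours.
\item The marginals of this measure on the finitely many coloured local pictures of $G\backslash X$ give a positive solution to finitely many integer-coefficient gluing equations.
\item Hence there is a positive integral solution. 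Gluing the corresponding numbers of coloured cubes produces a finite cover whose hyperplanes are colourable.
\item Cutting along one colour class at a time gives a hierarchy, which is automatically quasiconvex in the hyperbolic setting.
\end{enumerate}
So the genuinely delicate step is converting an invariant measure into a finite cover. It is not making the hierarchy quasiconvex, and it is not a Ramsey argument.
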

The proof of \Cref{virtual haken conjecture} relies crucially on the work of \cite{kmImmersingAlmostGeodesic,smCubeComplex,bwBoundaryCriterionCubulation}. Building on \cite{hwSpecialCubeComplex,aiCriteriaVirtualFibering}, Agol \cite{aiVirtualHakenConj} resolves the virtual fibered conjecture of Thurston.  
\subsection{Virtual retracts and homological injections}
We start by recalling that a retraction of a group implies injections of homology.
Let \(G\) be a group and \(H\le G\) a subgroup. A \emph{retraction} is a homomorphism
$
r \colon  G \to H
$
such that \(r|_H = \mathrm{id}_H\). Equivalently, with the inclusion \(i\colon H\hookrightarrow G\), we have
$
r\circ i = \mathrm{id}_H.
$
\begin{lem}\label{retraction implies injection}
	If $H$ is a subgroup of $G$ which is a retraction, then we have injections
	$$H_k (H;\mathbb{Z}) \hookrightarrow H_k (G;\mathbb{Z}) \tn{ for all } k.$$
\end{lem}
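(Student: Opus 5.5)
The plan is to use functoriality of group homology. The identity $r\circ i=\mathrm{id}_H$ transfers directly to the induced maps on homology, and from there injectivity follows formally.

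Homology of groups is a covariant functor from groups to graded abelian groups. Concretely, $H_k(G;\Z)=H_k(K(G,1);\Z)$, or equivalently $\mathrm{Tor}^{\Z G}_k(\Z,\Z)$. A homomorphism $\phi\colon A\to B$ induces a map $K(A,1)\to K(B,1)$, unique up to based homotopy. This map in turn induces $\phi_*\colon H_k(A;\Z)\to H_k(B;\Z)$, and the assignment satisfies $(\psi\circ\phi)_*=\psi_*\circ\phi_*$ and $(\mathrm{id})_*=\mathrm{id}$.

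Apply this to $i\colon H\hookrightarrow G$ and the retraction $r\colon G\to H$. Since $r\circ i=\mathrm{id}_H$, functoriality gives
\begin{equation}
r_*\circ i_*=(r\circ i)_*=(\mathrm{id}_H)_*=\mathrm{id}_{H_k(H;\Z)}
\end{equation}
for every $k\geq 0$. Now suppose $i_*(x)=0$. Then $x=r_*(i_*(x))=r_*(0)=0$, so $i_*$ is injective.

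As a byproduct, $H_k(H;\Z)$ is a direct summand of $H_k(G;\Z)$, with complement $\ker r_*$.

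There is no real obstacle here. The only point needing care is the well-definedness of the induced maps on group homology, which is standard; the rest is purely formal.
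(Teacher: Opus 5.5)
Your proof is correct and is essentially the paper's argument: the paper also passes to classifying spaces, uses $Br\circ Bi=B(r\circ i)=\mathrm{id}_{BH}$, and takes integral homology to get $(Br)_*\circ(Bi)_*=\mathrm{id}$, hence injectivity. Your remark that the induced maps of $K(\pi,1)$'s are only well defined up to based homotopy is slightly more careful than the paper's on-the-nose equality, and it suffices because homology is homotopy invariant.
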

\begin{proof}
	The group \(G\) has a classifying space \(BG\) with \(H_k (G;\mathbb{Z}) \cong H_k (BG;\mathbb{Z})\).  
	A homomorphism \(\phi\colon G\to K\) induces a continuous map \(B\phi\colon BG\to BK\).
	Thus  \(i\colon  H\hookrightarrow G\) induces \(Bi\colon BH \to BG\) and \(r\colon  G\to H\) induces \(Br\colon BG\to BH\).
	The equality \(r\circ i = \mathrm{id}_H\) implies
	$
	Br\circ Bi = B(r\circ i) = B(\mathrm{id}_H) = \mathrm{id}_{BH}.
	$
	Now take homology with integer coefficients:
	\[
	(Br)_* \circ (Bi)_* = (\mathrm{id}_{BH})_* = \mathrm{id}_{H_k (BH;\mathbb{Z})} = \mathrm{id}_{H_k (H;\mathbb{Z})}.
	\]
	
	Thus we have an injection
	$
	(Bi)_*\colon  H_k (H;\mathbb{Z}) \hookrightarrow H_k (G;\mathbb{Z}) \tn{ for all } k.
	$
\end{proof}

\begin{theorem}[\citenum{spSubgroupsGeometric}, Lemma 1.4]\label{separable and virtually embedded}
	If $G=\pi_1(M)$ of a manifold with universal cover $\tm$, then $H<G$ is separable if and only if each compact subset of $\tm/H$ embeds in an intermediate finite cover of $M$. 
\end{theorem}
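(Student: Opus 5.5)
The plan is to prove both directions directly from proper discontinuity of the deck action of $G$ on $\tm$. Write $X_K=\tm/K$ for any subgroup $K\le G$; for $H\le K\le G$ the map $X_H\to X_K$ is a covering. An \emph{intermediate finite cover} is $X_K$ with $K$ of finite index and $H\le K$. Recall that $H$ is separable when, for every finite set $F\subset G\setminus H$, there is a finite-index $K\ge H$ with $K\cap F=\emptyset$. By intersecting finitely many such $K$, it suffices to separate a single element at a time.

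\emph{Separable $\Rightarrow$ embedding.}
\begin{enumerate}
\item Let $C\subset X_H$ be compact. Cover $C$ by finitely many evenly covered compact balls for $\tm\to X_H$, and choose one lift of each. The union is a compact set $D\subset\tm$ whose image in $X_H$ contains $C$.
\item Let $F=\{g\in G: gD\cap D\neq\emptyset\}$. This set is finite by proper discontinuity.
\item Apply separability to the finite set $F\setminus H$. This gives a finite-index $K\ge H$ with $K\cap F\subset H$.
\item Check injectivity. Suppose $x,y\in C$ have the same image in $X_K$. Write $x=[\tilde a]$ and $y=[\tilde b]$ with $\tilde a,\tilde b\in D$. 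Then $\tilde b=k\tilde a$ for some $k\in K$, so $kD\cap D\neq\emptyset$. Hence $k\in F\cap K\subset H$, and therefore $x=y$ in $X_H$.
\end{enumerate}
So $C\to X_K$ is injective, and since $C$ is compact it is an embedding.

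\emph{Embedding $\Rightarrow$ separable.}
\begin{enumerate}
\item Let $g\in G\setminus H$. Fix a basepoint $\tx\in\tm$ and a compact path $\tilde c$ from $\tx$ to $g\tx$. Let $C$ be its image in $X_H$.
\item The points $[\tx]$ and $[g\tx]$ of $X_H$ are distinct. Indeed, $g\tx=h\tx$ with $h\in H$ would force $h^{-1}g=1$ by freeness of the deck action, contradicting $g\notin H$.
\item By hypothesis $C$ embeds in some intermediate finite cover $X_K$ with $[K:G]$ finite and $H\le K$. Then $[\tx]\neq[g\tx]$ in $X_K$.
\item It follows that $g\notin K$, since otherwise $g\tx$ and $\tx$ would be identified in $X_K$.
\end{enumerate}
This separates $g$ from $H$ by a finite-index subgroup containing $H$. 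Thus $H$ is the intersection of the finite-index subgroups containing it, so $H$ is separable.

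The only mildly delicate point is the first step of the forward direction. There one must produce a compact $D\subset\tm$ that surjects onto $C$, because the full preimage of $C$ is not compact. This is handled by the finite cover of $C$ by evenly covered balls. Everything else is bookkeeping with the finite set $F$.
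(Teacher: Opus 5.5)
Your argument is correct and is essentially the standard proof of Scott's Lemma 1.4. In the forward direction you take a compact lift $D$, use finiteness of $\{g : gD\cap D\neq\emptyset\}$, and apply separability to exclude the elements outside $H$; in the converse you use a path from $\tilde x$ to $g\tilde x$ whose image in $\tilde M/H$ is not a loop. The paper gives no proof of its own and simply cites Scott.

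Two cosmetic points. In the converse you mean $[G:K]<\infty$, not $[K:G]$. Also, the path is unnecessary there, since the two-point set $\{[\tilde x],[g\tilde x]\}$ is already compact.
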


We rephrase [\citenum{bhwHyperplaneSection}, Lemma 1.7 and the last paragraph],[\citenum{bwBoundaryCriterionCubulation}, THEOREM 6.2], and \cite{cdwVirtuallySpecial3-manifolds} as follows. 
\begin{lem}[\citenum{bhwHyperplaneSection,bwBoundaryCriterionCubulation}]\label{totally geodesic hypersurface separable}
	Let $M =G  \backslash \hs$ be an arithmetic manifold of type I. The $\pi_1(M)$ is virtually special and the fundamental group of a totally geodesic hypersurface $F\hookrightarrow M$ is a virtual retract. 	
\end{lem}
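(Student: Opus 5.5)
Since the statement explicitly rephrases results from \cite{bhwHyperplaneSection}, \cite{bwBoundaryCriterionCubulation} and \cite{cdwVirtuallySpecial3-manifolds}, my plan is to assemble it from the cubulation and specialness machinery recalled above rather than prove anything from scratch. The argument has two parts: first show that $\pi_1(M)$ is virtually special, then show that $\pi_1(F)$ is a virtual retract.

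\textbf{Step 1: virtual specialness.} Since $M$ is arithmetic of type I, the commensurator of $G$ is dense in $\Isom(\hs)$, and $M$ contains a closed totally geodesic hypersurface coming from a rational hyperplane of the defining quadratic form. Translating this hyperplane by commensurator elements gives a family of totally geodesic hypersurfaces in finite covers whose lifts are dense in the space of hyperplanes of $\hs$. Each such lift is a convex copy of $\mathbb{H}^n$, so its stabilizer is a quasiconvex, codimension-$1$ subgroup. For any distinct $u,v\in\partial\hs$, I will choose a hyperplane in the family whose boundary sphere separates $u$ from $v$; this is possible because separation is an open condition on hyperplanes and the family is dense. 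The hypothesis of \Cref{bergeron-wise} then holds, so $G$ acts properly and cocompactly on a CAT(0) cube complex. By \Cref{virtual haken conjecture}, $G$ is virtually special. (One could instead use the fact that arithmetic groups of simplest type are standard and invoke \cite{bhwHyperplaneSection} directly.)

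\textbf{Step 2: virtual retract.} Let $H=\pi_1(F)$, which is quasiconvex because $F$ is totally geodesic. For a virtually special hyperbolic group, Haglund--Wise show that quasiconvex subgroups are separable and in fact virtual retracts: pass to a finite-index special subgroup $G'$, so that $G'\backslash X$ is a special cube complex. Then $H\cap G'$ has a compact convex core in the cover of $G'\backslash X$, and the canonical completion and retraction of Haglund--Wise produces a finite cover in which this core embeds and retracts. This gives a finite-index subgroup $K\le G$ containing $H\cap G'$ together with a retraction $K\to H\cap G'$.

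To obtain a retraction onto $H$ itself, I will use the alternative route of \cite{bhwHyperplaneSection}, Lemma 1.7. Separability (\Cref{separable and virtually embedded}) gives a finite cover in which $F$ embeds. That cover embeds as a totally geodesic hypersurface, and \cite{bhwHyperplaneSection} cubulates relative to the hypersurface so that $\pi_1(F)$ becomes a convex subcomplex group, to which the canonical retraction applies.

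\textbf{Main obstacle.} The hardest point is the passage from $H\cap G'$ to $H$, that is, making sure the retraction is onto the full group $\pi_1(F)$ and not merely a finite-index subgroup of it. I would handle this by arranging that the special finite-index subgroup $G'$ contains $H$. One natural choice is to take $G'$ so that $F$ lifts, for instance by using separability of $H$, or by using the fact that $H$ itself acts specially on its convex core. Failing that, I will appeal directly to the statement of \cite{bhwHyperplaneSection}, Lemma 1.7.
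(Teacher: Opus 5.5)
\textbf{Step 1.} Step~1 is fine when $M$ is closed. It is the commensurator argument behind \cite{bwBoundaryCriterionCubulation}, Theorem~6.2, followed by \Cref{virtual haken conjecture}. One small repair: replace each hyperplane stabilizer by its side-preserving subgroup of index at most $2$, so that $u,v$ really lie in $H$-distinct components. Also, if $M$ has cusps then $G$ is not hyperbolic, and neither \Cref{bergeron-wise} nor Agol's theorem applies.

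\textbf{Step 2: the gap.} The genuine gap is in Step~2, exactly where you flag it. The Haglund--Wise canonical completion and retraction needs a \emph{special} target. So it only gives a virtual retraction onto $H\cap G'$, which is $\pi_1$ of a finite cover of $F$, not onto $H=\pi_1(F)$.

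Your main repair, choosing the special subgroup $G'$ to contain $H$, cannot work in general. The fundamental group of a special cube complex embeds in a right-angled Artin group. Hence every subgroup of $G'$ is residually torsion-free nilpotent, and if nontrivial and finitely generated it has infinite abelianization. So $H\le G'$ would force $b_1(F)>0$. The hypotheses do not give you this, and it is false whenever $F$ is a rational homology sphere.

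The other suggestions do not help either. Separability of $H$ gives a finite cover in which $F$ embeds, but that cover is not special, so the canonical retraction is unavailable there. Knowing that $H$ acts specially on its core runs into the same obstruction, and in any case would not make a finite-index subgroup containing $H$ special. The fallback of appealing to \cite{bhwHyperplaneSection}, Lemma~1.7, is only a citation.

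\textbf{What is missing, and what the paper does.} You need a virtual-retract theorem valid for \emph{virtually} special groups. The paper gives no proof of this lemma. It cites \cite{bhwHyperplaneSection,bwBoundaryCriterionCubulation} for virtual specialness. For the retract part it relies on \Cref{virtually special virtual retract} from \cite{cdwVirtuallySpecial3-manifolds}, which is stated for compact virtually special cube complexes. That theorem applies to $G=\pi_1(G\backslash\widetilde X)$ once Step~1 is done.

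If you want to close the gap by hand, here is one route.
\begin{enumerate}
\item Take $G'$ normal, and let $\widetilde Y\subset\widetilde X$ be an $H$-cocompact convex subcomplex.
\item Check that the canonical completion and retraction of $(H\cap G')\backslash\widetilde Y\to G'\backslash\widetilde X$ are natural under the deck action of $H/(H\cap G')$. This naturality is the point that needs verification.
\item Naturality gives two things: the completion group $K''\le G'$ is normalized by $H$, and the retraction $\rho'\colon K''\to H\cap G'$ is $H$-equivariant.
\item Then $xh\mapsto\rho'(x)h$ is a well-defined retraction of the finite-index subgroup $K''H$ onto $H$. Well-definedness uses $K''\cap H=H\cap G'$.
\end{enumerate}

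\textbf{What you did prove.} Your argument does give a virtual retraction onto $\pi_1$ of a finite cover of $F$. That is all the paper actually uses later: \Cref{totally geodesic hypersurface virtual homology} and \Cref{strongly filling implies virtual homology injects} pass to finite covers of both the hypersurface and $M$. It is, however, weaker than the lemma as stated.
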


\begin{theorem}[\citenum{cdwVirtuallySpecial3-manifolds}, Theorem 1.3]\label{virtually special virtual retract}
Let $X$ be a compact, virtually special cube complex and suppose that $\pi_1(X)$ is hyperbolic relative to a collection of finitely generated abelian subgroups. Then every relatively quasiconvex subgroup of $\pi_1(X)$  is a virtual retract.
\end{theorem}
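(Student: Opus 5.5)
The plan is to reduce to the Haglund--Wise canonical completion and retraction, which turns convex-cocompact subgroups of special groups into virtual retracts. The extra work is to handle the parabolic (abelian) parts of a relatively quasiconvex subgroup, which need not act cocompactly on any convex subcomplex.

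\textbf{Step 1 (pass to a special cover).} Replace $X$ by a finite special cover $\hat X$. Being a virtual retract passes up and down finite index once we intersect $H$ with $\pi_1(\hat X)$ and take the retraction on the finite-index subgroup. So we may assume $X$ is compact special. Then there is a local isometry $X\to R$ into the Salvetti complex $R$ of a right-angled Artin group.

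\textbf{Step 2 (convex cocompact case).} Suppose $H\le \pi_1(X)$ acts cocompactly on a convex subcomplex $\tilde Y\subset\tilde X$. Then $Y=H\backslash\tilde Y\to X$ is a local isometry of compact cube complexes. The canonical completion $C(Y,X)\to X$ of Haglund--Wise is a finite cover in which $Y$ embeds. Moreover, the canonical retraction $C(Y,X)\to Y$ is a cubical map restricting to the identity on $Y$; specialness of $X$ is used here to control hyperplane self-intersections and osculations. On $\pi_1$ this gives a retraction of the finite-index subgroup $\pi_1(C(Y,X))$ onto $H$. This is the key source of virtual retractions.

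\textbf{Step 3 (enlarge $H$ to a convex cocompact subgroup).} For a general relatively quasiconvex $H$, the intersections $H\cap gPg^{-1}$ with the peripheral abelian subgroups are arbitrary subgroups of finitely generated abelian groups. Each such $gPg^{-1}$ acts cocompactly on a convex flat-like subcomplex, but $H\cap gPg^{-1}$ does not. I would proceed in three moves.

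\begin{enumerate}
\item For each of the finitely many $H$-conjugacy classes of maximal parabolic subgroups of $H$, pick a subgroup $Q\le gPg^{-1}$ with $Q\cap H$ of finite index in $H\cap gPg^{-1}$ and $\langle H\cap gPg^{-1},Q\rangle$ of finite index in $gPg^{-1}$. This is possible because in a finitely generated abelian group every subgroup is virtually a direct factor. Replace $Q$ by a deep finite-index subgroup.
\item Apply the relative combination theorem of Martínez-Pedroza / Manning--Martínez-Pedroza (or Hruska's version) to form $H'=\langle H, Q_1,\dots,Q_m\rangle$. With deep enough choices, $H'$ is relatively quasiconvex, and it is naturally an amalgam $H *_{H\cap Q_i} Q_i$ (iterated). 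Its peripheral subgroups are now finite index in the ambient peripherals, i.e.\ $H'$ is ``full''.
\item Use the Sageev--Wise / Hruska--Wise cores to show that a full relatively quasiconvex subgroup acts cocompactly on a convex subcomplex. Then Step 2 makes $H'$ a virtual retract.
\end{enumerate}

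\textbf{Step 4 (compose retractions).} It remains to see that $H$ is a virtual retract of $H'$. Since $H'$ is the graph-of-groups amalgam above, define $H'\to H$ by the identity on $H$ and, on each $Q_i$, by a retraction $Q_i\to Q_i\cap H$ (after passing to finite index, using the direct-factor choice). These agree on the edge groups, so they glue. Composing with the retraction from Step 2 gives the desired virtual retraction of $\pi_1(X)$ onto $H$, up to finite index.

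I expect the main obstacle to be Step 3. One must simultaneously ensure that the amalgam structure (needed for Step 4) and fullness/convex cocompactness (needed for Step 2) hold. This requires choosing the $Q_i$ deep enough for the combination theorem while keeping them complementary to $H\cap gPg^{-1}$. The deep choice comes from separability of the peripheral subgroups, which holds in virtually special groups. I would first try taking $Q_i$ inside a characteristic finite-index subgroup of $gPg^{-1}$ avoiding a prescribed finite set, as the combination theorem requires.
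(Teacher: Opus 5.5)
The paper gives no proof of this statement; it quotes it from Chesebro--DeBlois--Wilton, so your proposal has to stand on its own. Its architecture is the standard one: enlarge $H$ to a full relatively quasiconvex $H'$ that retracts onto $H$ via the Mart\'{\i}nez-Pedroza combination theorem, take a convex core for $H'$, and apply Haglund--Wise canonical completion and retraction. However, Step~1 has a genuine gap.

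After passing to a special cover $\hat X$ you only work with $H_0=H\cap\pi_1(\hat X)$. Steps 2--4 then produce a finite-index $K_0\le\pi_1(\hat X)$ and a retraction $\rho\colon K_0\to H_0$. That is not a retraction onto $H$, and the passage from $H_0$ to $H$ is not formal: for general groups, a virtual retraction onto a finite-index subgroup of $H$ need not yield one onto $H$. What you need is equivariance. If $K_0$ is normalized by $H$ and $\rho(hkh^{-1})=h\rho(k)h^{-1}$ for all $h\in H$, $k\in K_0$, then $HK_0$ has finite index, $H\cap K_0=H_0$, and $hk\mapsto h\rho(k)$ is a well-defined retraction $HK_0\to H$.

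One way to arrange this:
\begin{itemize}
\item Run Steps 3--4 in $G=\pi_1(X)$ itself. $G$ is torsion-free, since it acts freely on the CAT(0) complex $\tilde X$, and none of relative quasiconvexity, the combination theorem, or the core theorem uses specialness.
\item Take $\hat X\to X$ regular and apply canonical completion to $\hat Y=(H'\cap\pi_1\hat X)\backslash\tilde Y'\to\hat X$.
\item The finite group $H'/(H'\cap\pi_1\hat X)$ acts compatibly on $\hat Y$ and freely on $\hat X$. Canonical completion and retraction involve no choices, so this group acts freely on $C(\hat Y,\hat X)$ and the retraction is equivariant.
\item The quotient is a finite cover of $X$ that retracts onto $H'\backslash\tilde Y'$.
\end{itemize}

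A second, smaller repair is needed in Step 3(1). You only require $Q\cap H$ to have finite index in $D=H\cap gPg^{-1}$, and then you shrink $Q$. But the combination theorem needs $Q\cap H=D$, i.e.\ $D\le Q$. Otherwise your amalgam claim is false: $D$ and $Q$ commute inside the abelian group $gPg^{-1}$, so $\langle H,Q\rangle$ is not $H*_{H\cap Q}Q$ when $H\cap Q\subsetneq D$.

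Instead, write $gPg^{-1}\cong\mathbb{Z}^k=\bar D\oplus C$, where $\bar D$ is the saturation of $D$, and take $Q=D\oplus mC$. Then:
\begin{itemize}
\item $Q\cap H=D$ and $Q$ has finite index in $gPg^{-1}$.
\item Every element of $Q\smallsetminus D$ has length at least a constant times $m$. Its image in $gPg^{-1}/\bar D\cong C$ is a nonzero element of $mC$, and peripheral subgroups are undistorted. This is exactly the depth hypothesis.
\item The projection $Q\to D$ is the retraction you glue in Step~4.
\end{itemize}
So depth must be imposed on the complement $C$, not on $Q$ as a whole. With these two repairs your argument goes through.
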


\begin{theorem}[\citenum{bhwHyperplaneSection}, Corollary 1.3]\label{totally geodesic hypersurface virtual homology}
	Let $M=G \backslash\hs$ be an arithmetic manifold and $P\subset \hs$ be a $G $-hyperplane. Then, there exists a finite cover $\hat{M}$ of $M$ such that $P$ projects to an embedded submanifold $F\hookrightarrow M$ and 
	$$H_k(F, \Z) \rightarrow H_k(\hat{M},\Z)$$
	is injective for every integer $k\geq 0$. 
\end{theorem}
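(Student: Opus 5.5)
The plan is to combine the virtual retraction property of \Cref{totally geodesic hypersurface separable} with Scott's criterion (\Cref{separable and virtually embedded}) and \Cref{retraction implies injection}. Let $H=\mathrm{Stab}_G(P)$. Since $P$ is a $G$-hyperplane, $H$ acts cocompactly on $P$, and $F_0\coloneqq H\backslash P$ is a closed totally geodesic hypersurface immersed in $M$. In particular $M$ contains a totally geodesic hypersurface, so by the remark in \Cref{preliminaries} it is arithmetic of type I. Then \Cref{totally geodesic hypersurface separable} gives that $G=\pi_1(M)$ is virtually special and $H$ is a virtual retract: there is a finite-index subgroup $G_1\le G$ with $H\le G_1$ and a retraction $r\colon G_1\to H$.

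The next step, and the one I expect to need the most care, is to pass from "virtual retract" to "separable" so that Scott's criterion applies. First, $G$ is residually finite, because it is a finitely generated linear group (Malcev). So the profinite topology on $G_1$ is Hausdorff. Next, $H=\{g\in G_1 : r(g)=g\}$: every element of $H$ is fixed by $r$, and every $g$ with $r(g)=g$ lies in $r(G_1)=H$. This is the equalizer of $\mathrm{id}$ and $i\circ r$. Both maps are continuous in the profinite topology, since preimages of finite-index subgroups are finite-index. The equalizer of two continuous maps into a Hausdorff space is closed, so $H$ is separable in $G_1$. Since $G_1$ has finite index in $G$, $H$ is also separable in $G$. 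By \Cref{separable and virtually embedded}, applied to the compact image of $P$ in $H\backslash\hs$ (a copy of $F_0$), there is an intermediate subgroup $H\le G_2\le G$ of finite index such that $P$ projects to an embedded copy of $F_0$ in $G_2\backslash\hs$.

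Now set $G''=G_1\cap G_2$, which has finite index and contains $H$, and let $\hat M=G''\backslash\hs$. The map $\hat M\to G_2\backslash\hs$ is a covering, and the image of $P$ in $\hat M$ is $\mathrm{Stab}_{G''}(P)\backslash P=H\backslash P=F$. Since $F$ maps injectively to the embedded hypersurface downstairs, $F$ is embedded in $\hat M$. The restriction $r|_{G''}\colon G''\to H$ is still a retraction, because $H\le G''\le G_1$. By \Cref{retraction implies injection}, $H_k(H;\Z)\to H_k(G'';\Z)$ is injective for all $k$.

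Finally, $P$ and $\hs$ are contractible and $H$, $G''$ act freely (as $G$ is torsion-free, $M$ being a manifold). Hence $F$ and $\hat M$ are classifying spaces for $H$ and $G''$, and the map above is exactly $H_k(F;\Z)\to H_k(\hat M;\Z)$ induced by the inclusion. This gives the claimed injectivity for every $k\ge 0$.
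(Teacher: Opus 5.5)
This statement is only quoted in the paper, and your derivation is correct: it follows the same route as the cited source and as the paper's own Corollary~\ref{strongly filling implies virtual homology injects}. That route is: $\mathrm{Stab}_G(P)$ is a virtual retract, hence separable, hence embedded in a finite cover by Scott's criterion, and the restricted retraction gives homology injectivity for the aspherical spaces $F$ and $\hat M$ via Lemma~\ref{retraction implies injection}.

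Your one tacit assumption is that $M$ is closed; you use it when you say $H$ acts cocompactly on $P$ and apply Scott's criterion to a compact set. That is the only case the paper needs. For a finite-volume $M$ with non-compact $F$, you would add a compact-core argument, using that every self-intersection locus of $F$ meets the thick part.
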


\begin{cor}[\citenum{bhwHyperplaneSection}, Corollary 1.6]
	Let $M =G  \backslash \hs$ be an arithmetic hyperbolic manifold and $F_1$ and $F_2$ be two totally geodesic immersed submanifolds in $M$. Assume $F_1$ and $F_2$ transversally intersect in at least one point. Then there exists a finite cover $\mhat$ of $M$ and two connected components $\hat{F}_1$ and $\hat{F}_2$ of the preimages of $F_1$ and $F_2$ in $\mhat$ such that $\hat{F}_1$ and $\hat{F}_2$ are both embedded in $\mhat$ and their intersection $\hat{F}_1\cap \hat{F}_2$ is connected and non-trivial in $H_*(\mhat)$. 
\end{cor}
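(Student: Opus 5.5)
The statement to prove is [\citenum{bhwHyperplaneSection}, Corollary 1.6]. The plan is to derive it from separability of the totally geodesic subgroups together with the virtual retract property of \Cref{totally geodesic hypersurface separable} and \Cref{totally geodesic hypersurface virtual homology}.

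\textbf{Step 1: reduce to a subgroup-intersection problem.} Let $H_1, H_2 \le G$ be the stabilizers of lifts $P_1, P_2\subset\hs$ of $F_1, F_2$, chosen so that $P_1\cap P_2 \neq \emptyset$ and the intersection is transverse. Then $P_1\cap P_2$ is a totally geodesic subspace $Q$. Its stabilizer $H_1\cap H_2$ acts cocompactly on $Q$, because $G$ is arithmetic and $Q$ is again a $G$-rational subspace. The subgroups $H_1$, $H_2$ and $H_1\cap H_2$ are all quasiconvex in the hyperbolic group $G$.

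\textbf{Step 2: embed both pieces in one cover.} Since $\pi_1(M)$ is virtually special, \Cref{virtually special virtual retract} shows that quasiconvex subgroups are virtual retracts, hence separable. By \Cref{separable and virtually embedded}, there is a finite cover in which the compact cores $H_i\backslash P_i$ embed. To make the two embedded lifts intersect only along the single component $(H_1\cap H_2)\backslash Q$, I would use double coset separability of $H_1H_2$. For quasiconvex subgroups of virtually special hyperbolic groups this follows from Minasyan's GFERF results. With it, I choose a finite-index subgroup $G'$ containing $H_1$ and $H_2$ that excludes the finitely many "bad" elements $g\notin H_1H_2$ for which $gP_2$ meets $P_1$ within the relevant compact region. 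The resulting $\hat M = G'\backslash\hs$ has $\hat F_1$ and $\hat F_2$ embedded, with $\hat F_1\cap\hat F_2$ equal to the image of $Q$, which is connected.

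\textbf{Step 3: nontriviality in homology.} Write $L=\hat F_1\cap \hat F_2$, a closed orientable totally geodesic submanifold of dimension $\dim F_1+\dim F_2-(n+1)$. Applying \Cref{totally geodesic hypersurface virtual homology}, or more generally the virtual retract property to $\pi_1(L)=H_1\cap H_2$, I pass to a further finite cover in which $\pi_1(L)$ is a retract. By \Cref{retraction implies injection}, $H_*(L)\to H_*(\hat M)$ is injective, so the fundamental class $[L]\neq 0$.

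Passing to this further cover risks disconnecting the intersection. I would handle this by taking the cover fixed in Step 2, then intersecting with a retraction cover that contains $H_1$ and $H_2$. I would also use the fact that the retraction extends from $H_1\cap H_2$ through $H_1$, since a virtual retract of a virtual retract is a virtual retract. Then I would re-run Step 2 inside this cover, because separability is preserved under passing to finite index.

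\textbf{Main obstacle.} I expect the hardest step to be this compatibility: getting the intersection connected and homologically injective in the same cover. The key input is double coset separability, and I would invoke it as a known consequence of virtual specialness rather than reprove it.
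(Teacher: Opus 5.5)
The paper does not prove this statement; it imports it from \cite{bhwHyperplaneSection}. Judged on its own, your proposal has a genuine gap in Step~2, and the gap also undermines your compatibility fix in Step~3.

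You ask for a finite-index $G'$ that contains the full stabilizers $H_1$ and $H_2$, i.e.\ degree-one lifts $\hat F_i\cong F_i$. Separability of the double coset $H_1H_2$ does not produce such a subgroup. It produces a finite-index normal $N$ with $g\notin H_1H_2N$. Any subgroup containing $H_1\cup H_2$ contains $\langle H_1,H_2\rangle$, which is in general much larger than $H_1H_2$. A $G'$ of your type exists only if every bad element lies outside the profinite closure of $\langle H_1,H_2\rangle$; this includes the bad elements for $F_1\cap F_2$ and those for the self-intersections of $F_1$ and $F_2$. Nothing in your argument gives this.

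In fact the degree-one requirement is incompatible with the conclusion in general. If $H_1\le G'$, then $\hat F_1\cong F_1$, and the component of $\hat F_1\cap\hat F_2$ through the image of $Q$ is the image $L$ of $Q$ in $F_1$. The map $L\to\hat M$ factors through $F_1$, so $[L]\neq0$ in $H_*(\hat M)$ forces $[L]\neq0$ in $H_*(F_1)$. Now suppose $L$ is null-homologous in $F_1$, e.g.\ $F_2$ is a hypersurface and $L$ separates $F_1$. The hypotheses do not exclude this, and the paper itself has to get around it by passing to covers in which $L$ is nonseparating. In that case no cover of your type can work.

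For the same reason, a ``retraction cover containing $H_1$ and $H_2$'' onto $C=H_1\cap H_2$ need not exist: it would restrict to a retraction $H_1\to C$. You do not need such containment for the homology step anyway. A retraction of a finite-index $G_0$ onto $C$ restricts to a retraction of every finite-index subgroup of $G_0$ that contains $C$.

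The missing idea is that $\hat F_1,\hat F_2$ must in general be proper finite covers of $F_1,F_2$, chosen compatibly. Once you allow that, connectedness is no longer automatic. If you simply take $G'=N$ (or $CN$) avoiding the bad double cosets, both lifts become embedded. But the components of $\hat F_1\cap\hat F_2$ lying over $L$ are then indexed, roughly, by $(\bar H_1\cap\bar H_2)/\bar C$, with bars denoting images in $G/N$. So you would also need $H_1N\cap H_2N=CN$, a profinite-intersection property that needs its own proof.

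A standard way through is a virtual combination theorem, e.g.\ of Baker--Cooper type:
\begin{enumerate}
\item Work inside a finite-index $G_0\supseteq C$ that retracts onto $C$.
\item Pass to deep finite-index subgroups $C\le A\le H_1\cap G_0$ and $C\le B\le H_2\cap G_0$, so that the nontrivial translates of $Q$ are far apart in $P_1$ and in $P_2$.
\item By such a theorem, $D=\langle A,B\rangle\cong A*_CB$ is quasiconvex, and the $D$-translates of $P_1,P_2$ meet in a tree-like pattern. Hence in $D\backslash\hs$ the images of $P_1,P_2$ are embedded and meet exactly in $C\backslash Q$.
\item Separability of $D$ and Scott's criterion embed a compact neighborhood of their union in a finite cover with group $D\le G'\le G_0$.
\item Restricting the retraction to $G'$ gives $[C\backslash Q]\neq0$ by the retraction lemma.
\end{enumerate}
Separately, your use of hyperbolicity and virtual specialness of $G$ tacitly restricts you to closed $M$ of type~I, while the statement as quoted says only ``arithmetic''.
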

\subsection{Strongly filling subgroups and cubulations}
\begin{theorem}\label{strongly filling applications}
	If $S$ is a \poin, strongly filling hypersurface of a closed negatively curved manifold $M$, such that $\pi_1(S)$ is a quasi-convex, codimension-$1$ subgroup of $\pi_1(M)$, then $\pi_1(M)$ acts properly and cocompactly on a \catzero cube complex with one family of hyperplanes, and $\pi_1(M)$ is virtually special and hence embeds in $\spl_d(\Z)$ for some $d\in \N$. 
\end{theorem}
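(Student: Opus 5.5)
The plan is to check the hypotheses of the Bergeron--Wise criterion (\Cref{bergeron-wise}) using only conjugates of the single subgroup $H=\pi_1(S)$, and then apply Sageev's construction and Agol's theorem. Set $G=\pi_1(M)$. Since $M$ is closed and negatively curved, $G$ is word-hyperbolic and quasi-isometric to $\tm$, so $\partial G\cong\partial_\infty\tm$. Because $H$ is quasi-convex, its limit set $\partial H\subset\partial G$ is exactly the boundary at infinity of any lift $\ts$ of $S$ corresponding to $H$. Each other lift is $g\ts$, which corresponds to $gHg^{-1}$ and has boundary $g\,\partial H=\partial(gHg^{-1})$.

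\textbf{Step 1 (separation).} Let $p\neq q\in\partial G$. Strong filling of $S$ gives a lift $\ts'=g\ts$ such that $p,q$ lie in different components of $\partial_\infty\tm-\partial_\infty\ts'$. I then need these to be $gHg^{-1}$-distinct components. The subgroup $gHg^{-1}$ preserves $\ts'$. Since $S$ is orientable and $M$ is orientable, $S$ is two-sided, so the stabilizer preserves each of the two complementary sides of $\ts'$ in $\tm$. Quasi-convexity together with the codimension-$1$ property, namely that $\pi_1(S)\backslash\tm$ has two ends coming from the two sides, shows that the boundary components on different sides are not identified by $gHg^{-1}$. The hypotheses of \Cref{bergeron-wise} then hold with the single conjugacy class of $H$; equivalently, $H$ is a strongly filling subgroup, as in \Cref{strongly filling hypersurface give sf groups}.

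\textbf{Step 2 (one family of hyperplanes).} I would not quote \Cref{bergeron-wise} as a black box, since it only produces some finite collection $H_1,\dots,H_k$. Instead I would rerun its proof. Properness in Bergeron--Wise comes from showing that, for every pair of distinct boundary points, some wall separates them, and that enough walls cut across every geodesic so that cubes stay bounded. Here all walls are $G$-translates of the single wall $\partial H$, and the separation in Step 1 is already achieved by translates of that one wall. The dual cube complex of Sageev's construction for the single codimension-$1$ subgroup $H$ therefore has all hyperplanes in one $G$-orbit. Cocompactness follows from \Cref{cube complex cocompact}. Properness follows from the Bergeron--Wise argument, using that every pair of distinct points of $\partial G$ is separated by some translate. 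This step is the main obstacle. I would first check whether the proof of [\citenum{bwBoundaryCriterionCubulation}, Theorem 1.4] only ever uses the walls supplied by the separation hypothesis, so that $k=1$ is allowed. If some compactness argument on $(\partial G)^3$ forces extra subgroups, I would fall back to taking finitely many conjugates $g_iHg_i^{-1}$. Walls from conjugates are all $G$-translates of $\partial H$, so they still form one family of hyperplanes.

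\textbf{Step 3 (conclusion).} Once $G$ acts properly and cocompactly on a \catzero cube complex, \Cref{virtual haken conjecture} shows that $G$ is virtually special. By [\citenum{hwSpecialCubeComplex}, Theorem 1.1], a finite-index special subgroup embeds in a right-angled Artin group and hence in $\spl_{d'}(\Z)$. Inducing this representation from the finite-index subgroup up to $G$ yields an embedding $G\hookrightarrow\spl_d(\Z)$, with $d=d'[G:G']$ and determinant adjusted if needed, for example by passing to a block that sits in $\spl$.
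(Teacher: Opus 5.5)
Your proposal is correct and takes essentially the paper's route. The paper simply asserts that the single subgroup $\pi_1(S)$, through its conjugates, meets the hypotheses of Sageev's cocompactness theorem and the Bergeron--Wise boundary criterion; it then applies Agol's theorem, Haglund--Wise's embedding into right-angled Artin groups and an induced representation, and gets one family of hyperplanes from the single $\pi_1(M)$-orbit of walls. Your Step 2 worry resolves as you hope: Bergeron--Wise extract their finite collection by compactness from the subgroups supplied by the separation hypothesis, so conjugates of one $H$ give $k=1$, and your Step 1 check of $H$-distinctness via two-sidedness is a point the paper does not address.
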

\begin{proof}
	Since $M$ is a closed closed negatively manifold, $\pi_1(M)$ is word-hyperbolic. If $S$ is strongly filling and $\pi_1(S)$ is a quasi-convex, codimension-$1$ subgroup of $\pi_1(M)$, then $\{\pi_1(S)\}$ satisfies the conditions in \Cref{cube complex cocompact} and \Cref{bergeron-wise}.
    Thus $\pi_1(M)$ acts properly and cocompactly on a \catzero cube complex $X$.
    By \Cref{virtual haken conjecture}, $G$ is virtually special, i.e. a finite-index subgroup is the fundamental group of special cube complex $X$ in the sense of \cite{hwSpecialCubeComplex}.
    Therefore $\pi_1(M)$ is a right-angled Artin group (RAAG).
    Since RAAGs are known to be linear over $\Z$, $\pi_1(M)$ is virtually linear over $\Z$. Using induced representations, $\pi_1(M)$ is linear over $\Z$. 
	
	The orbits of the hyperplanes correspond to the image $\{\pi_1(S)\}$ under the natural action by $\pi_1(M)$. Thus there is only one family of hyperplanes. 
\end{proof}

A CAT(0) cube complex $X$ is \textit{essential}, if for each hyperplane $w$, each of the associated halfspaces contains points in $X$ arbitrarily far from $w$. If a group $G$ acts by automorphism on $X$, the action is \textit{essential} if for any point $x$ in the zero-skeleton of $X$, and each hyperplane $w$, each of the associated halfspaces contains points in $G\cdot x$ arbitrarily far from $w$. 

The cube complex $X$ is \textit{hyperplane-essential} if each hyperplane $w$, regarded itself as a CAT(0) cube complex, is essential. The action of $G$ on $X$ is \textit{hyperplane-essential} if each hyperplane $w$ has  the property that the stabilizer of $w$ acts essentially on $w$.

The stabilizer of a hyperplane correspond to conjugation of $H \cong \pi_1(S)$. Since $S$ is compact and negatively curved by \Cref{nearly geodesic hypersurfaces}, $\pi_1(S)$ is again a word-hyperbolic group. As in \cite{hxhNearlyGeodesicFilling}, we can again deduce that $X$ is essential and hyperplane-essential. 
\begin{cor}\label{strongly filling implies virtual homology injects}
If $S$ is a \poi strongly filling hypersurface of a closed hyperbolic manifold $M$ such that $\pi_1(S)$ is a quasi-convex, codimension-$1$ subgroup of $\pi_1(M)$, then every quasi-convex subgroup is a virtual retract of $\pi_1(S)$. Thus if $S$ is a \poi hypersurface with quasi-convex subgroup $\pi_1(S)$, there are finite covers $\hat{M}$ of $M$ and $\hat{S}$ of $S$ such that 
		$$H_k(\hat{S}, \Z) \rightarrow H_k(\hat{M},\Z)$$
		is injective for each $k \geq 0$. 
\end{cor}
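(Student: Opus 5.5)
The proof combines the previous results in the following order. First, \Cref{strongly filling applications} applies to the \poin, strongly filling, quasi-convex codimension-$1$ hypersurface $S$. It shows that $G=\pi_1(M)$ acts properly and cocompactly on a \catzero cube complex $X$, and by \Cref{virtual haken conjecture} $G$ is virtually special. Concretely, there is a finite-index subgroup $G'\le G$ with $G'=\pi_1(G'\backslash X)$ and $G'\backslash X$ a compact special cube complex. Since $M$ is closed hyperbolic, $G$ is word-hyperbolic. Hence $G$ is hyperbolic relative to the empty collection of abelian subgroups, and relative quasi-convexity coincides with ordinary quasi-convexity.

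Next I apply \Cref{virtually special virtual retract} with $X$ replaced by the compact virtually special cube complex $G\backslash X$ (or $G'\backslash X$). This gives that every quasi-convex subgroup $Q\le G$ is a virtual retract. That is, there is a finite-index subgroup $G_Q\le G$ containing $Q$ and a retraction $r\colon G_Q\to Q$. To pass from $G$ to $G'$ I would use that $Q\cap G'$ is quasi-convex in $G'$, because finite-index subgroups are quasi-isometrically embedded. The first assertion then follows. I read it as: every quasi-convex subgroup of $\pi_1(M)$ is a virtual retract, in particular $\pi_1(S)$ and its quasi-convex subgroups.

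For the homological statement, let $S$ be any \poi hypersurface with $H=\pi_1(S)$ quasi-convex in $G$. By the previous step there is a finite-index $K\le G$ with $H\cap K$ of finite index in $H$, and a retraction $r\colon K\to H\cap K$. In fact one may take $H\le K$, but I allow passing to $H\cap K$ to be safe. Let $\hat M=K\backslash \hs$, a finite cover of $M$. Let $\hat S$ be the finite cover of $S$ corresponding to $H\cap K$; it lifts to $\hat M$. Both $\hat M$ and $\hat S$ are aspherical: $\hat M$ is covered by $\hs$, and $\hat S$ is a closed negatively curved, hence aspherical, manifold by \Cref{nearly geodesic hypersurfaces}. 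Here I would add the assumption that $S$ is \nege or otherwise aspherical; in general the word "hypersurface" needs asphericity of $S$ for this step. Then $H_k(\hat S;\Z)=H_k(H\cap K;\Z)$ and $H_k(\hat M;\Z)=H_k(K;\Z)$, and the map induced by the lifted immersion is the map induced by the inclusion $H\cap K\hookrightarrow K$. \Cref{retraction implies injection} then gives injectivity for all $k\ge 0$.

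The main obstacle is the middle step. One must check that the hypotheses of \Cref{virtually special virtual retract} match our setting: a compact virtually special cube complex whose fundamental group is $G$, rather than $G$ merely acting on $X$. The proper and cocompact action gives $G\backslash X$ as a compact cube complex only up to torsion or orbifold issues. Since $G$ is torsion-free, as the fundamental group of a closed aspherical manifold, the action is free, so $G\backslash X$ is a compact nonpositively curved cube complex with $\pi_1=G$. Agol's theorem then makes it virtually special, which is exactly what \Cref{virtually special virtual retract} requires. The remaining care is the identification of the homology of $\hat S$ with group homology, which depends on asphericity as noted above.
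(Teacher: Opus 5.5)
Your proposal is correct and follows the paper's own route: \Cref{strongly filling applications} with Agol's theorem gives virtual specialness, \Cref{virtually special virtual retract} (a hyperbolic group being hyperbolic relative to the empty family) makes every quasi-convex subgroup a virtual retract, and \Cref{retraction implies injection} then gives injectivity on group homology. The paper leaves your asphericity caveat implicit, and it is genuinely needed. For example, the boundary $S^1\times S^2$ of a tubular neighbourhood of a simple closed geodesic in a closed arithmetic hyperbolic $4$-manifold of type I is a \poi hypersurface with quasi-convex $\pi_1$ whose $H_2$ dies in every finite cover; for \nege hypersurfaces, however, asphericity is automatic by \Cref{nearly geodesic hypersurfaces}.
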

\begin{proof}
    By \Cref{strongly filling applications}, $\pi_1(M)$ is virtually special. By \Cref{virtually special virtual retract}, every quasi-convex subgroup is a virtual retract of $\pi_1(M)$. The conclusion now follows from \Cref{retraction implies injection}. 
\end{proof}
Thus hypersurfaces $S$ in a hyperbolic manifold $M$ with $\sff_{[S]}<1$ have quasi-convex fundamental groups and hence are virtual retract. This implies that $S$ virtually homologically injects and completes the proof of \Cref{asymptotically geodesic implies virtually special}. 
\section{Nearly geodesic and \ag hypersurfaces}\label{ggt}
\subsection{Nearly geodesic hypersurfaces}
Inspired by the works on almost Fuchsian and nearly Fuchsian surfaces of hyperbolic $3$-manifolds, we make the following definition. 
\begin{defi}[Nearly geodesic hypersurface]
	Let $N$ be a Riemannian manifold. 
	A closed, smoothly immersed hypersurface $S \hookrightarrow N$ is \emph{\nege } if $\Vert \sff_{S}\Vert_{L^\infty(S)} <1$. 
\end{defi}
Thurston observes that \nege  hypersurfaces in a hyperbolic manifolds are \poi (see [\citenum{lcSmallCurvatureSurface}, Theorem 5.1]). El Emam and Seppi establish some further geometric and topological properties which we recall below. 

\begin{lem}[\citenum{esGaussMapEquivariantImmersion}, Section 4]\label{lemma_embeddedness}
Let $S\subset M$ be a closed immersed \almostgeo hypersurface, and let $\widetilde S\subset \mathbb H^{n+1}$ be a connected lift.
Then we have the following. 
\begin{enumerate}
\item $\widetilde{S}$ is uniformly negatively curved. 
\item $\widetilde S$ is properly embedded in $\mathbb H^{n+1}$ and separates $\mathbb H^{n+1}$ into two connected components.
\item The asymptotic boundary $\partial_\infty \widetilde S\subset S^n_\infty$ is a topological $(n-1)$-sphere, and $\partial_\infty \mathbb H^{n+1}\setminus \partial_\infty \widetilde S$ consists of two connected components.
\end{enumerate}
\end{lem}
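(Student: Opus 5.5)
The plan is to work in the universal cover $\hs$ and compare $\widetilde S$ with totally geodesic hyperplanes, using the uniform bound $\kappa:=\|\sff_S\|_{L^\infty(S)}<1$ throughout. Since $S$ is closed, $\kappa<1$ holds uniformly on $S$, and hence on every lift $\widetilde S$.

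\textbf{Item (1).} This is immediate from the Gauss equation. For an orthonormal pair $e_1,e_2$ tangent to $\widetilde S$,
\[
K_{\widetilde S}(e_1,e_2) = -1 + \langle \sff(e_1,e_1),\sff(e_2,e_2)\rangle - |\sff(e_1,e_2)|^2 \le -1+\kappa^2 < 0 .
\]
So the induced metric on $\widetilde S$ is complete, with sectional curvatures bounded above by $-(1-\kappa^2)$. Completeness holds because $\widetilde S$ covers the closed manifold $S$.

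\textbf{Item (2).} Use the foliation of $\hs$ by equidistant hypersurfaces from a totally geodesic hyperplane $P$. Fix $x\in\widetilde S$, let $P$ be the hyperplane tangent to $\widetilde S$ at $x$, and write $\hs\cong\R\times P$ via the signed distance $t$ to $P$. The key comparison step is to show that the function $t$ restricted to $\widetilde S$ has controlled growth. Along a unit-speed geodesic $\gamma$ of $\widetilde S$ starting at $x$, the ambient curvature of $\gamma$ is at most $\kappa<1$. A standard comparison (curves of geodesic curvature $<1$ in $\mathbb H^2$ are quasi-geodesics, and their ends diverge) shows two things:
\begin{itemize}
\item the intrinsic and extrinsic distances satisfy $d_{\hs}(x,\gamma(s))\ge c(\kappa)s-C(\kappa)$;
\item $\widetilde S$ stays on one side of the two "equidistant cones" (the $k$-cones the paper alludes to) at $x$. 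These are the hypersurfaces at distance $r(\kappa)$ from $P$, with principal curvature $\tanh r=\kappa$.
\end{itemize}
The linear lower bound gives that $\exp$-type maps are proper, so the immersion is a proper map. Injectivity then follows from the cone picture. If two points of $\widetilde S$ coincided in $\hs$, consider the geodesic of $\widetilde S$ joining them. Being a curve of curvature $<1$, it is a quasi-geodesic, and its endpoints cannot coincide.

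Hence $\widetilde S$ is a properly embedded hypersurface homeomorphic to $\R^n$, by Cartan--Hadamard from item (1). A properly embedded, simply connected, closed hypersurface in $\hs\cong\R^{n+1}$ separates it into exactly two components. This follows from Alexander duality, or from $H_1(\hs)=0$ together with a mod $2$ intersection argument.

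\textbf{Item (3).} Use the Gauss map, or the normal flow. The hypersurfaces $\widetilde S_t$ at distance $t$ along the normal are smooth immersions for all $t$ when $\kappa<1$, since the principal curvatures evolve as $\tanh$ of shifted arguments and never blow up. The normal exponential map $\widetilde S\times\R\to\hs$ is then a diffeomorphism; this is where $\kappa<1$ is used essentially. As $t\to\pm\infty$, the endpoint maps $G_\pm:\widetilde S\to\partial_\infty\hs$ are local diffeomorphisms. They are injective and proper onto their images because normal geodesics from distinct points diverge, again by the quasi-geodesic estimate.

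Thus $\partial_\infty\hs=G_+(\widetilde S)\sqcup\partial_\infty\widetilde S\sqcup G_-(\widetilde S)$, with the two open images being disks. The limit set $\partial_\infty\widetilde S$ is the image of the visual boundary of $\widetilde S$, which is $S^{n-1}$ because $\widetilde S$ is a quasi-convex CAT($-1+\kappa^2$) $\R^n$ quasi-isometrically embedded in $\hs$. The boundary map is a homeomorphism by the Morse lemma / Bowditch.

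\textbf{Main obstacle.} I expect the hardest step to be the global properness and injectivity of the normal exponential map. Its local nondegeneracy is a computation; its global bijectivity requires the quasi-geodesic comparison done carefully in all dimensions.
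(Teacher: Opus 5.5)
Your route differs from the paper's. The paper proves (1) by the same Gauss-equation bound, but for (2) and (3) it cites El Emam--Seppi: Proposition 4.15 for the proper embedding; Proposition 4.18 for the separation, since the normal exponential map foliates $\hs$ by equidistants (this is exactly the global statement you flag as your main obstacle); Remark 4.19 for the quasi-isometric embedding and the boundary sphere; and Proposition 4.20 for the two components at infinity.

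You instead get embeddedness from a comparison for intrinsic geodesics, separation from Alexander duality, and the boundary sphere from the quasi-isometric embedding. That is sound and more self-contained, provided the comparison is stated in the right form. Let $\gamma$ be a unit-speed intrinsic geodesic of $\widetilde S$; it is a curve in $\hs$, not in $\mathbb H^2$. Set $u(s)=\cosh d_{\hs}(\gamma(0),\gamma(s))$. Since $\mathrm{Hess}\cosh d_p=(\cosh d_p)\,g$ and $|\nabla_{\gamma'}\gamma'|\le\kappa$, you get $u''\ge(1-\kappa)u$ with $u(0)=1$, $u'(0)=0$. Hence $d_{\hs}(\gamma(0),\gamma(s))\ge\sqrt{1-\kappa}\,s$. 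It is the absence of an additive constant that gives injectivity; a quasi-geodesic bound $d\ge cs-C$ alone does not exclude $\gamma(0)=\gamma(s)$.

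Two smaller fixes. Take $\widetilde S$ to be the universal cover with the lifted immersion, so that Cartan--Hadamard applies. Drop the ``equidistant cones'' bullet: it is misstated (equidistants at distance $r(\kappa)$ from $P$ do not pass through $x$), it is unproved, and nothing uses it.

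The step that does not go through as written is in (3). Absence of focal points only makes the normal exponential map $\widetilde S\times\R\to\hs$ a local diffeomorphism, not a diffeomorphism. Injectivity of $G_\pm$ does not follow from your quasi-geodesic estimate: that estimate concerns curves inside $\widetilde S$ and cannot rule out two ambient normal rays from distinct points being asymptotic. You also have not shown that $G_+(\widetilde S)$, $G_-(\widetilde S)$ and $\partial_\infty\widetilde S$ are pairwise disjoint and cover $\partial_\infty\hs$.

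The missing ingredient for this route is the strict horoball comparison. Let $\beta$ be the Busemann function centred at $G_+(x)$, normalized by $\beta(x)=0$ and decreasing toward $G_+(x)$. The identity $\mathrm{Hess}\,e^{\beta}=e^{\beta}g$ gives $e^{\beta\circ\gamma}\ge\cosh(\sqrt{1-\kappa}\,s)$ along intrinsic geodesics from $x$. So $\widetilde S\setminus\{x\}$ lies strictly outside both horoballs tangent to $\widetilde S$ at $x$. This yields injectivity of $G_\pm$ and $d(\exp^\perp(x,t),\widetilde S)=|t|$, hence properness of the normal exponential map, which then makes it a diffeomorphism by the covering argument.

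You can, however, bypass the Gauss maps entirely. Once the boundary extension shows that $\partial_\infty\widetilde S$ is a topologically embedded $S^{n-1}$ in $\partial_\infty\hs\cong S^n$, the Jordan--Brouwer separation theorem gives exactly two complementary components. This is the same topological input you already used in (2).
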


\begin{proof}
Let $S\hookrightarrow M$ be a closed hypersurface such that $\|\sff_{S}\|_{L^\infty(S)}<1$. Consider a lift $\iota\colon  \ts \hookrightarrow \hs$ which is a Riemannian covering of $S$.
[\citenum{esGaussMapEquivariantImmersion}, Remark 4.3] points out that $\ws$ is negatively curved. We provide some more detail for convenience. 
	By the compactness of $S$, there exists a $\delta\in (0,1]$ such that $\|\sff_{S}\|_{L^\infty(S)}\leq 1-\delta$. By the Gauss equation in hyperbolic manifolds, for any $2$-plane $\sigma\in T_xS$, the sectional curvature $\sec(\sigma) = -1 + \det(\sff_S|_\sigma)$. Since all the principal curvatures of $S$ satisfy $k_i \leq 1-\delta$, $\det(\sff_S|_\sigma) = k_ik_j\leq (1-\delta)^2$ and therefore $\sec(\sigma) \leq -2\delta+\delta^2$. 

Since $S$ is closed, the induced metric on $\widetilde S$ is complete. 
By \cite[Proposition 4.15]{esGaussMapEquivariantImmersion}, any complete immersion in $\mathbb H^{n+1}$ with principal curvatures in $(-1,1)$ is a proper embedding. Hence $\widetilde S$ is properly embedded.

Moreover, by \cite[Proposition 4.18]{esGaussMapEquivariantImmersion}, the normal exponential map along $\widetilde S$ defines a foliation of $\mathbb H^{n+1}$ by equidistant hypersurfaces. In particular, $\widetilde S$ separates $\mathbb H^{n+1}$ into two connected components.

Furthermore, by \cite[Remark 4.19]{esGaussMapEquivariantImmersion}, the inclusion $\widetilde S\hookrightarrow \mathbb H^{n+1}$ is a quasi-isometric embedding, and extends continuously to the boundary at infinity. The image $\partial_\infty \widetilde S$ is a topological $(n-1)$-sphere. Finally, by \cite[Proposition 4.20]{esGaussMapEquivariantImmersion}, the complement $\partial_\infty \mathbb H^{n+1}\setminus \partial_\infty \widetilde S$
has two connected components.
\end{proof}

\begin{lem}\label{nearly geodesic hypersurfaces}
	Let $M$ be a closed hyperbolic $(n+1)$-manifold and  $S\hookrightarrow M$ a closed, \almostgeo hypersurface. Then $\pi_1(S)\rightarrow \pi_1(M)$ is injective, quasi-convex, and codimension-$1$. 
\end{lem}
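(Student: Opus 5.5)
The plan is to work entirely in the universal cover and use \Cref{lemma_embeddedness}. Fix a lift $\ts\subset\hs$ of $S$, invariant under the image $\iota_*\pi_1(S)$ acting by deck transformations.

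\textbf{Injectivity.} By \Cref{lemma_embeddedness}(1), the induced metric on $S$ has sectional curvature bounded above by $-2\delta+\delta^2<0$. Hence the universal cover of $S$ is a complete, simply connected, negatively curved manifold, and so is diffeomorphic to $\R^n$. The developing map $\tilde S_{\mathrm{univ}}\to\hs$ is a complete immersion with principal curvatures in $(-1,1)$, so by [\citenum{esGaussMapEquivariantImmersion}, Proposition 4.15] it is a proper embedding. Now take $\gamma\in\pi_1(S)$ with $\iota_*\gamma=1$. Then $\gamma$ acts on the universal cover of $S$ and commutes with the embedding, so it acts trivially on the image. Since the map is injective, $\gamma$ acts trivially upstairs, hence $\gamma=1$. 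This recovers Thurston's observation cited from \cite{lcSmallCurvatureSurface}.

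\textbf{Quasi-convexity.} Since $S$ is compact, $\pi_1(S)$ acts cocompactly on $\ts$ with its induced path metric. By the Švarc–Milnor lemma, $\pi_1(S)$ is quasi-isometric to $\ts$. By [\citenum{esGaussMapEquivariantImmersion}, Remark 4.19], the inclusion $\ts\hookrightarrow\hs$ is a quasi-isometric embedding. Composing with the quasi-isometry $\hs\simeq\pi_1(M)$, the inclusion $\pi_1(S)\to\pi_1(M)$ is a quasi-isometric embedding. In the hyperbolic group $\pi_1(M)$, quasi-isometrically embedded subgroups are quasi-convex by Morse stability of quasi-geodesics. The one point to check here is that the orbit map is equivariant, so that the two quasi-isometries are compatible; this is routine.

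\textbf{Codimension-$1$.} I expect this to be the main obstacle, since the paper only sketched it. By \Cref{lemma_embeddedness}(2), $\hs\setminus\ts$ has two components $U_\pm$. Each is $\pi_1(S)$-invariant, because $\pi_1(S)$ preserves $\ts$ and (by orientability) its coorientation. Using the normal-exponential foliation from [\citenum{esGaussMapEquivariantImmersion}, Proposition 4.18], each $U_\pm$ contains points arbitrarily far from $\ts$, for instance along a normal geodesic. So for any $r$, the two sets $U_\pm\setminus N_r(\ts)$ are nonempty and not contained in any bounded neighborhood of $\ts$. Transfer this to the Cayley graph $\Delta$ via an equivariant quasi-isometry $\Delta\to\hs$ that sends $\pi_1(S)$ to within bounded distance of $\ts$. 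For $R$ large, a path in $\Delta\setminus N_R(\pi_1(S))$ maps to a coarse path avoiding $N_r(\ts)$, so it cannot cross $\ts$. Hence deep points mapping into $U_+$ and into $U_-$ lie in different components of $\Delta\setminus N_R(\pi_1(S))$. These components cannot be identified by $\pi_1(S)$, since $\pi_1(S)$ preserves $U_\pm$. This gives at least two $\pi_1(S)$-orbits of deep components.

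Equivalently, we could argue that $\pi_1(S)\backslash\hs$ deformation retracts to $S$ through the foliation and has two ends, matching the relative-ends definition. Either route uses only \Cref{lemma_embeddedness} together with standard coarse-geometric facts.
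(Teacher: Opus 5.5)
Your proposal is correct and follows essentially the same route as the paper. Injectivity comes from the El Emam--Seppi proper-embedding result for the lift. Quasi-convexity comes from the quasi-isometric embedding of the lift (their Remark 4.19) together with \v{S}varc--Milnor. Codimension-$1$ comes from the two complementary components of $\widetilde S$ in $\mathbb{H}^{n+1}$, which $\pi_1(S)$ preserves by orientability, transferred to the Cayley graph.

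Your version is more explicit than the paper's; for instance, your kernel-acts-trivially argument replaces the paper's terse ``$\widetilde S$ is contractible.'' The one step you leave implicit is that the far points on each side lie in a genuinely deep component. This follows from the connectedness of $U_\pm\setminus N_r(\widetilde S)\cong\widetilde S\times(r,\infty)$ under the normal foliation, or alternatively from there being only finitely many $\pi_1(S)$-orbits of components.
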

\begin{proof}
Let $\ts\subset \mathbb{H}^{n+1}$ be a lift of $S$. From Lemma~\ref{lemma_embeddedness} it follows that $\ts $ is properly embedded in $\mathbb{H}^{n+1}$, $\partial \ts \cong S^{n-1}$, and $\ts$ is a Cartan-Hadamard space. Since $\ts$ is contractible, $\pi_1(S)\rightarrow \pi_1(M)$ must be injective. 
	
    By \cite[Remark 4.19]{esGaussMapEquivariantImmersion}, the inclusion $\widetilde S\hookrightarrow \mathbb H^{n+1}$ is a quasi-isometric embedding. This implies that for any two points $p, q \in \ts$, the hyperbolic geodesic connecting $p$ and $q$ is within uniform distance (depending on $K(\delta)$) to $\ts$. Since $S$ is compact, $\pi_1(S)$ is uniformly quasi-isometric to the universal cover $\ts$, and thus $\pi_1(S) \rightarrow\pi_1(M)$ is quasi-convex. 

    To establish that $\pi_1(S)$ is a codimension-$1$ subgroup of $\pi_1(M)$, we first observe from the previous lemma that $\hs-\ts$ consists of two components.

    Since both $S$ and $M$ are compact, their universal covers are quasi-isometric to their Cayley graphs $\cg(\pi_1(S)), \cg(\pi_1(M))$. Since $S$ is orientable, $\pi_1(S)$ stablizes the two components of $  \hs  - \ts$. Thus $ \cg(\pi_1(M))-\cg(\pi_1(S))$ under the action of $\pi_1(S)$ has two components, verifying that $\pi_1(S)$ is a codimension-$1$ subgroup of $\pi_1(M)$. 
\end{proof}
Since the fundmental groups of arithmetic manifolds of type I are virtually special (\Cref{totally geodesic hypersurface separable}), by \Cref{strongly filling implies virtual homology injects}, we have the following. 
\begin{cor}\label{almost geo of arithmetic are virtually embedded}
	A closed \nege  hypersurface of an arithmetic manifold of type $I$ is virtually embedded and virtually homologially injects. 
\end{cor}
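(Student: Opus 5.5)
The plan is to put together three earlier results: \Cref{nearly geodesic hypersurfaces}, \Cref{totally geodesic hypersurface separable} and \Cref{virtually special virtual retract}, followed by \Cref{separable and virtually embedded} and \Cref{retraction implies injection}.

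Let $S\hookrightarrow M$ be a closed \nege hypersurface in an arithmetic manifold $M=\Gamma\backslash\hs$ of type I.

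\textbf{Step 1.} By \Cref{nearly geodesic hypersurfaces}, $H:=\pi_1(S)\to\pi_1(M)$ is injective and $H$ is a quasi-convex subgroup of the word-hyperbolic group $\pi_1(M)$.

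\textbf{Step 2.} By \Cref{totally geodesic hypersurface separable}, $\pi_1(M)$ is virtually special. So there is a finite-index $G'\le\pi_1(M)$ with $G'=\pi_1(X)$ for a compact special cube complex $X$. We may take $X$ compact because the cubulation of $\pi_1(M)$ is proper and cocompact.

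\textbf{Step 3.} Set $H'=H\cap G'$. Then $H'$ has finite index in $H$ and is still quasi-convex in $G'$. Since $G'$ is hyperbolic (relative to the empty collection of abelian subgroups), \Cref{virtually special virtual retract} applies. It gives a finite-index subgroup $G''\le G'$ containing $H'$ and a retraction $r\colon G''\to H'$.

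\textbf{Step 4.} Let $\hat M=G''\backslash\hs$ and $\hat S=H'\backslash\tilde S$. These are finite covers of $M$ and $S$, and $\hat S$ immerses in $\hat M$. By \Cref{retraction implies injection}, $H_k(H';\Z)\to H_k(G'';\Z)$ is injective for all $k$. Both $\hat S$ and $\hat M$ are aspherical: $\tilde S$ is contractible by \Cref{lemma_embeddedness}, and $\hs$ is contractible. Hence group homology equals the homology of these spaces, so $H_k(\hat S;\Z)\to H_k(\hat M;\Z)$ is injective. This proves virtual homological injectivity.

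\textbf{Step 5 (virtual embedding).} A virtual retract $H'$ of a residually finite group is separable. Here $G''$ is linear, being virtually special, hence residually finite. The separability argument goes as follows. If $g\notin H'$, then $g\neq r(g)$, so $r(g)^{-1}g\neq 1$ lies in $\ker r$. Residual finiteness gives a finite quotient $\phi$ of $G''$ separating $r(g)^{-1}g$ from $1$. The map $(r,\phi)$ composed with the map $H'\to$ a finite quotient is not directly finite. Instead, use the standard fact that $H'\cdot K$ is finite index for $K=\ker\phi\cap\ker r$, where $\ker r$ is normal and the profinite topology separates it.

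Simplest is to cite the standard result that virtual retracts of residually finite groups are separable. Separability of $H'$ in $G''$, hence of $H$ in $\pi_1(M)$, then follows. By \Cref{separable and virtually embedded}, applied to the compact subset of $\tilde M/H$ given by the image of $\hat S$, that is the embedded copy of $S$ inside $\tilde S/H\subset\tilde M/H$, this image embeds in an intermediate finite cover of $M$. So $S$ is virtually embedded.

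\textbf{Main obstacle.} The delicate point is Step 3, applying \Cref{virtually special virtual retract}. It requires the finite-index subgroup to be the fundamental group of a compact virtually special cube complex and the subgroup to be quasi-convex there. The passage from $\pi_1(M)$ to $G'$, and from $H$ to $H'$, has to preserve quasi-convexity. It does, since finite-index inclusions are quasi-isometries. The statement "virtual retract implies separable" in Step 5 also needs to be made precise.
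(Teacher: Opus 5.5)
Your proposal is correct and follows the paper's route: quasi-convexity from \Cref{nearly geodesic hypersurfaces}, virtual specialness from \Cref{totally geodesic hypersurface separable}, a virtual retraction from \Cref{virtually special virtual retract}, and homology injection from \Cref{retraction implies injection}; the paper packages these steps as \Cref{strongly filling implies virtual homology injects}. You also make explicit two points the paper leaves implicit: the asphericity of $\hat S$ and $\hat M$, and the separability-plus-\Cref{separable and virtually embedded} argument for virtual embeddedness. For the latter, the muddled middle of your Step 5 can be replaced by a simple equalizer argument: if $g\notin H'$ then $g\neq r(g)$, so choose a finite quotient $\phi$ with $\phi(g)\neq\phi(r(g))$; the finite-index subgroup $\{x:\phi(x)=\phi(r(x))\}$ then contains $H'$ but not $g$.
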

The following is a generalization of Uhlenbeck \cite{ukclosedminSurfaceHyperbolic}.
\begin{theorem}\label{totally geodesic area-minimizing}
	Let $S$ be a closed hyperbolic $n$-manifold  and $M$ a hyperbolic $(n+1)$-manifold. If a \poi immersion $\iota\colon  S \to M$ is totally geodesic, then it is area-minimizing in the homotopy class. 
\end{theorem}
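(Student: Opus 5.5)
Let $f\colon S\to M$ be any smooth map homotopic to $\iota$; the goal is $\vol_n(f)\geq \vol_n(\iota(S))=\vol(S)$, where $\vol_n(f)=\int_S |\mathrm{Jac}\, f|\, d\vol_S$. I will compare both maps inside the intermediate cover $\hat M=\iota_*\pi_1(S)\backslash \hs$. Because $\iota$ is \poin\ and totally geodesic, a lift $\tilde\iota$ maps $\ts=\mathbb{H}^n$ isometrically onto a totally geodesic hyperplane $P\subset\hs$, equivariantly with respect to $\pi_1(S)\cong\iota_*\pi_1(S)$. Thus $\hat M$ is the warped product $\R\times_{\cosh} \Sigma$ with $\Sigma=\pi_1(S)\backslash P\cong S$; in normal coordinates the metric is $dt^2+\cosh^2(t)\, g_{\Sigma}$, which is the content of the lemma from \cite{baTubularNeighborhoodsTotallyGeodesic} recalled in \Cref{preliminaries}. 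The homotopy from $\iota$ to $f$ lifts to $\hat M$, giving a lift $\hat f\colon S\to\hat M$ homotopic to the inclusion of $\Sigma$ and with $\vol_n(\hat f)=\vol_n(f)$, since $\hat M\to M$ is a local isometry.

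The main step is a $1$-Lipschitz, area-nonincreasing retraction. Let $\pi\colon\hat M\to\Sigma$ be the nearest-point projection $(t,x)\mapsto x$. As the metric is $dt^2+\cosh^2(t)g_\Sigma$ with $\cosh t\geq1$, the differential $d\pi$ kills $\partial_t$ and scales horizontal vectors by $1/\cosh t\le 1$; hence $d\pi$ has operator norm $\le 1$, and for every $n$-plane its $n$-dimensional Jacobian is at most $1$. Consequently $\vol_n(\pi\circ\hat f)\le\vol_n(\hat f)$ by the pointwise bound $|\mathrm{Jac}(\pi\circ\hat f)|\le|\mathrm{Jac}\,\hat f|$.

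It remains to bound $\vol_n(\pi\circ\hat f)$ below by $\vol(\Sigma)$. The map $g=\pi\circ\hat f\colon S\to\Sigma$ is homotopic to $\pi\circ(\text{inclusion})=\mathrm{id}$ under the identification $S\cong\Sigma$, so $\deg g=1$. Using orientability of $S$, the area formula gives
\begin{equation}
\vol_n(g)=\int_S|\mathrm{Jac}\, g|\ge\Big|\int_S \mathrm{Jac}\, g\Big| = |\deg g|\,\vol(\Sigma)=\vol(S).
\end{equation}
Chaining the inequalities yields $\vol_n(f)\ge\vol(S)=\vol_n(\iota)$, so $\iota$ is area-minimizing in its homotopy class.

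I expect the main obstacle to be carefully justifying the global structure of $\hat M$: one must check that the normal exponential map from $P$ descends to a diffeomorphism $\R\times\Sigma\to\hat M$, which holds because $P$ is convex in $\hs$ and the nearest-point projection onto $P$ is $\pi_1(S)$-equivariant. Once that is in place, the Jacobian estimate and the degree argument are routine. For the strict version needed later, equality would force $\cosh t\equiv1$ on the image and every tangent plane to be horizontal, so a non-totally-geodesic competitor has strictly larger area.
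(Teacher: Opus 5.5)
Your proposal is correct and follows essentially the same route as the paper: pass to the cover $\iota_*\pi_1(S)\backslash\hs$, where the totally geodesic hypersurface is embedded, then compose the lifted competitor with the area-nonincreasing nearest-point projection onto it and finish with the degree-one lower bound. Your explicit warped-product Jacobian computation and the area-formula justification $\int_S|\mathrm{Jac}\,g|\ge|\deg g|\,\vol(\Sigma)$ make precise two steps that the paper only asserts.
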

\begin{proof}
Let $\iota_0:S\to M$ be a totally geodesic $\pi_1$-injective immersion, and let $H=\iota_{0*}\pi_1(S)\le \pi_1(M)\le \Isom^+(\mathbb H^{n+1})$. Since $\iota_0$ is totally geodesic, $H$ preserves a hyperplane $P_0\subset \mathbb H^{n+1}$. Set
\[
M' = H\backslash \mathbb H^{n+1}, \quad S_0 = H\backslash P_0 \subset M'.
\]
Then $ M'$ is a convex cocompact hyperbolic \((n+1)\)-manifold, $S_0$ is a closed totally geodesic embedded hypersurface, and $\iota_0$ lifts to an isometric identification $S\cong S_0$. Let $\iota:S\to M$ be any immersion homotopic to $\iota_0$. Then $\iota$ lifts to a map $\tilde \iota:S\to M'$ homotopic to the inclusion $S_0\hookrightarrow M'$.

Let $\pi:M'\to S_0$ be the nearest-point projection which is $1$-Lipschitz. The $n$-Jacobian of $d\pi$ is strictly less than $1$ away from $S_0$.
Therefore
\[
\area((\pi\circ \tilde \iota)(S))\leq \area(\tilde \iota(S)),
\]
with equality only if $\tilde \iota(S)\subset S_0$. Identifying $S_0$ with $S$ via $\iota_0$, the map $\pi\circ \tilde \iota$ becomes a
self-map $f:S\to S$ homotopic to the identity, hence $\deg f=1$. Since $S$ is
hyperbolic, any degree-$1$ map $S\to S$ has area at least $\area(S)$.
Thus
\[
\area(\iota(S))=\area(\tilde \iota(S))\geq
\area((\pi\circ \tilde \iota)(S))\geq \area(S)=\area(\iota_0(S)).
\]
If equality holds, then $\tilde \iota(S)\subset S_0$, so $\iota$ is totally geodesic. Hence $\iota_0$ is area-minimizing in its homotopy class.
\end{proof}

\subsection{Asymptotically geodesic hypersurfaces are asymptotically dense in $\gnmo M$}
\begin{defi}[Asymptotically geodesic hypersurfaces]
	Let $M$ be a Riemannian manifold. We call a sequence of smoothly immersed hypersurfaces $[S_i] \hookrightarrow M$ \emph{\ag} if $\Vert \sff_{S_i}\Vert_{L^\infty(S_i)}\to 0$ as $i\to\infty$, where $\sff_{S_i}$ denotes the second fundamental form of $S_i$.
\end{defi}
Our goal in this subsection is to prove \ag hypersurfaces are asymptotically dense in the Grassmann bundles and eventually strongly filling. The following lemma controls the deviation between a pair of tangent geodesic and $k$-geodesic (a unit-speed \(C^2\)  curve whose curvature is $\leq k$). 
\begin{lem}\label{controlling distance between k-geodesic and geodesic}
	In $\hs$, let \(a(t)\) be a unit–speed geodesic, \(b(t)\) a unit–speed \(C^2\) curve whose geodesic curvature satisfies
	\(\Vert\kappa(t)\Vert \le k\) for all \(t\),
	and \(a(0)=b(0)\), \(a'(0)=b'(0)\).
	Then  for all \(t\ge 0\),
	\[
	d\bigl(a(t),b(t)\bigr)\;\le\;k\,(\cosh t - 1). 
	\]
\end{lem}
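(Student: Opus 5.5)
We plan to work in the hyperboloid model $\hs=\{x\in\R^{n+1,1}:\langle x,x\rangle=-1,\ x_{n+1}>0\}$. There the geodesic satisfies $a''=a$, and the unit-speed $C^2$ curve satisfies $b''=b+N$, where $N(t)\in T_{b(t)}\hs$ is its geodesic curvature vector, so $\langle N,N\rangle\le k^2$. The distance is read off from the Minkowski pairing via $\cosh d(a,b)=-\langle a,b\rangle$. Setting $w=b-a$, we get
\[
\langle w,w\rangle=-2-2\langle a,b\rangle=4\sinh^2\tfrac{d}{2}.
\]
Hence $h(t):=2\sinh\bigl(d(a(t),b(t))/2\bigr)$ satisfies $h\ge d(a(t),b(t))$. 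It therefore suffices to prove
\[
h(t)\le k(\cosh t-1),
\]
which is the solution of $y''=y+k$ with $y(0)=y'(0)=0$. The initial conditions match: $h(0)=0$, and $h'(0)=0$ because $a'(0)=b'(0)$.

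\textbf{Step 1: reduce to a comparison inequality.} We aim to show the differential inequality $h''\le h+k$ wherever $h>0$. The standard ODE comparison (Sturm/Gr\"onwall type, applied to $y-h$ with the same initial data) then gives $h\le y$ for all $t\ge 0$. At the initial point, or wherever $h$ is not smooth, we instead use upper Dini derivatives or a barrier argument: replace $k$ by $k+\epsilon$ and let $\epsilon\to0$.

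\textbf{Step 2: derive the inequality.} We have $w''=w+N$, so
\[
\tfrac12\bigl(\langle w,w\rangle\bigr)''=\langle w',w'\rangle+\langle w,w\rangle+\langle w,N\rangle .
\]
Writing $\langle w,w\rangle=h^2$, this reads
\[
hh''+(h')^2=\langle w',w'\rangle+h^2+\langle w,N\rangle .
\]
The needed estimates are the following.
\begin{enumerate}
\item $\langle w,N\rangle\le k h$. Both $w$ and $N$ are spacelike, and $N$ lies in the positive definite space $T_{b}\hs=b^\perp$. We decompose $w$ along $b$ and $b^\perp$ using $\langle w,b\rangle=-1-\langle a,b\rangle=\tfrac12 h^2$. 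The $b^\perp$-component of $w$ has squared norm $h^2+\tfrac14h^4$. Since $\langle b,N\rangle=0$, Cauchy--Schwarz in $b^\perp$ gives the bound up to a factor $\sqrt{1+h^2/4}=\cosh(d/2)$.
\item A Cauchy--Schwarz type inequality $\langle w',w'\rangle\le (h')^2$ plus error terms, coming from the component of $w'$ orthogonal to $w$.
\end{enumerate}
These correction factors are exactly why we might need to run the argument with $\cosh d-1=\tfrac12 h^2$ in place of $h$. Alternatively, we would prefer the cleaner route: differentiate $d$ directly via the first variation formula,
\[
d'(t)=\langle b'(t),\nu_b\rangle-\langle a'(t),\nu_a\rangle ,
\]
where $\nu_a,\nu_b$ are the unit tangents of the geodesic segment $[a(t),b(t)]$ at its endpoints. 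Then use the second variation formula in $\hs$: the Jacobi fields there grow like $\cosh$, and the curvature of $b$ contributes at most $k$. This yields $d''\le d\cdot(\text{Jacobi term})+k$ in the form needed for comparison with $k(\cosh t-1)$.

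\textbf{Main obstacle.} The main difficulty is Step 2: getting a clean inequality with constant exactly $k$, and no $\cosh(d/2)$ loss, while handling the non-smoothness of $d$ at $t=0$.

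What we would try first is the integral representation
\[
w(t)=\int_0^t\sinh(t-s)\,N(s)\,ds ,
\]
obtained from solving $w''=w+N$ with zero initial data. We would then bound $\sqrt{\langle w,w\rangle}$ by transporting each $N(s)$ via the boost along $a$ into a fixed Euclidean-normed frame. This would give $h(t)\le\int_0^t k\sinh(t-s)\,ds=k(\cosh t-1)$, up to checking that the boost distortion is harmless.

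If that fails, we would settle for the first-variation argument above, which yields the stated bound directly for $d$.
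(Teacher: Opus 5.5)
\textbf{There is a genuine gap: the proposal stops before the step that carries the content of the lemma.}

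Your reduction is correct, and in one respect sharper than the paper's. In the hyperboloid model, $w=b-a$ satisfies exactly $w''=w+N$ with $w(0)=w'(0)=0$. Hence $w(t)=\int_0^t\sinh(t-s)N(s)\,ds$ and $\langle w,w\rangle=4\sinh^2(d/2)\ge d^2$. This is the exact form of the equation $u''-u=F$ that the paper writes in Fermi coordinates.

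The missing step is $\langle w(t),w(t)\rangle^{1/2}\le\int_0^t\sinh(t-s)|N(s)|\,ds$, i.e.\ $h\le k(\cosh t-1)$. None of your three routes is carried through, and none of them is a formality:
\begin{enumerate}
\item \emph{Duhamel integral.} The Minkowski form is indefinite, and the $N(s)$ lie in different spacelike hyperplanes $b(s)^\perp$, so there is no triangle inequality. For instance, with $e_0$ timelike and $e_1$ spacelike orthonormal, $\cosh\phi\,e_1\pm\sinh\phi\,e_0$ are unit spacelike vectors whose sum has length $2\cosh\phi$.
\item \emph{Boost transport.} Writing $N(s)$ in an orthonormal frame $a(t),a'(t),e_2,\dots,e_{n+1}$ multiplies its $a(s)$- and $a'(s)$-components by $\cosh(t-s)$ and $\sinh(t-s)$. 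So the boost distortion is not harmless a priori. These components produce exactly the $a(t),a'(t)$-part of $w$, i.e.\ the along-track lag of $b$ behind $a$.
\item \emph{Differential inequality.} Besides the $\cosh(d/2)$ loss you found, $\langle w',w'\rangle-(h')^2$ is the Minkowski square of the component of $w'$ in the Lorentzian hyperplane $w^\perp$, so it has no sign. For small $t$ it is the Euclidean centrifugal term: positive, and already for a circle of the same order as the gain $kh-\langle w,N\rangle$. So $h''\le h+k$ does not follow pointwise without a quantitative comparison.
\item \emph{Second variation.} The second variation of $d$ contains the positive term $|b'_\perp-Pa'_\perp|^2/\sinh d$, where $P$ is parallel transport along $[a(t),b(t)]$. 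This term is not of the form $d\cdot(\text{Jacobi term})$.
\end{enumerate}

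\textbf{Why loose estimates cannot close the gap.} The inequality is sharp to leading order. Take the hypercycle of curvature $k=\tanh\rho<1$ tangent to $a$ at $a(0)$ and bending away from it. For each fixed $t$ one gets $d(a(t),b(t))=k(\cosh t-1)+O(k^3)$, and the same holds for $h$. So $k(\cosh t-1)$ is exactly the linearized solution, and every loss must be paid for by an explicitly identified second-order gain, which you do not provide.

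The paper gets past this point by writing $b(t)=\exp_{a(t)}(u(t))$ in Fermi coordinates along $a$. There the displacement lives in the Euclidean normal space, so the Duhamel integral can be normed directly, and the paper asserts the forcing bound $\|F\|\le k$. Your exact equation shows that what this step must absorb is precisely the along-track part described above.

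\textbf{What an exact substitute could look like.} For a planar curve, use Fermi coordinates $(\sigma,r)$ along $a$, with $\kappa$ signed and positive when bending away from $a$. Then $r'=\sin\psi$ and $\psi'=\kappa+\tanh r\cos\psi$. Hence $r''=\kappa\cos\psi+\tanh r\cos^2\psi\le r+k$ for $r\ge 0$, which bounds the normal distance by $k(\cosh t-1)$. You would still have to show that the lag $t-\sigma(t)$ fits into the remaining slack in $\cosh d=\cosh r\cosh(t-\sigma)$. The lag contributes at order $k^4$ to $d^2$, the same order as that slack.
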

\begin{proof}
	We first setup in Fermi coordinates along \(a\). Take a parallel orthonormal normal frame \(\{e_1(t),\dots,e_{n}(t)\}\) along \(a(t)\).
	For small normal vectors \(u\in\mathbb R^{n}\), points near \(a(t)\) can be written as
	\[
	x = \exp_{a(t)}\Bigl(\sum_i u_i e_i(t)\Bigr).
	\]
	
	Assume \(b(t)\) stays in a normal neighborhood of \(a\) and write
	\[
	b(t) = \exp_{a(t)}\bigl( u(t) \bigr),\qquad
	u(t)\in\mathbb R^{n},
	\]
	so \(u(t)\) is the ``normal displacemen'' of \(b\) from \(a\) at time \(t\). The acceleration of \(b\) decomposes into tangential and normal parts:
	\[
	\nabla_{\dot b}\dot b = \kappa(t)\,\nu(t),
	\]
	where \(\kappa(t)\) is the geodesic curvature and \(\nu(t)\) is a unit normal.
	
	In the Fermi coordinates above, the normal component of the relative displacement \(u(t)\) satisfies a second–order ODE of the form
	\[
	u''(t) - u(t) = F(t),
	\]
	where the homogeneous operator \(u'' - u\) is the Jacobi operator for normal variations in curvature \(-1\), and 
	the inhomogeneity \(F(t)\) encodes the normal acceleration difference between \(a\) (which has zero normal acceleration) and \(b\); its size is controlled by the curvature bound:
	\[
	\|F(t)\|\;\le\;\Vert\kappa(t)\Vert\;\le\;k.
	\]
	
	Since \(a(0)=b(0)\) and \(a'(0)=b'(0)\), we have
	$
	u(0)=0, u'(0)=0.
	$
	Thus each component of \(u\) satisfies the scalar ODE
	\[
	u'' - u = f(t),\quad |f(t)|\le k,\quad u(0)=u'(0)=0.
	\]
	
	The solution with those initial data is
$
	u(t)=\int_0^t \sinh(t-s)\, f(s)\,ds.
	$
	
	Taking norms and using \(|f(s)|\le k\):
	\[
	\|u(t)\|
	\;\le\;
	\int_0^t \sinh(t-s)\,|f(s)|\,ds
	\;\le\;
	k\int_0^t \sinh(t-s)\,ds
	=
	k(\cosh t - 1).
	\]
	
	The above inequality is valid globally if we parameterize \(b\) via the normal exponential map along \(a\). Thus we get the uniform estimate
$
	d\bigl(a(t),b(t)\bigr) \;\le\; k\,(\cosh t - 1)
	\quad\text{for all }t\ge 0.
$
\end{proof}
All curves are assumed to have unit-speed parameterizations in the paper. 
By standard facts for the second fundamental forms and the decomposition of the curvature of a curve in a submanifold into tangential and normal components, if a submanifold has principal curvature $\leq k$, then every intrinsic geodesic has curvature $\leq k$.

The proof of the next proposition is inspired by the proof of [\citenum{cmnCountingminimal}, Proposition 4.2]. Their proof relies on the compactness of minimal surface theory, and firstly shows that the lifts $\tsj \subset \hy$ of asymptotically geodesic surfaces $S_j$ graphically converge to a totally geodesic plane $P$, and then shows that $\partial \tsj$ converges to $\partial P$. Instead, our proof relies on elementary properties of hyperbolic geometry, Cartan-Hadamard space, curvature comparison, and Hausdorff convergence (see [\citenum{bpLecturesHyperbolicGeo}, Proposition E.1.2.]). 	In a locally compact metrizable space $X$, a sequence $C_j $ of closed sets \emph{converges} to a closed set $C$ (denoted by $\lim\limits_{n\rightarrow \infty}C_j =C$) if the following two conditions are fulfilled: 
\begin{enumerate}
	\item if $x\in X$ is such that there exists a subsequence $C_{n_j}$ of ${C_j }$ and $x_j \in C_{n_j}$ with $x_j \rightarrow x $ in $X$, then $x\in C$; 
	\item given $x \in C$, there exists $x_j \in C_j$ for all $j$ such that $x_j\rightarrow x$ in $X$. 
\end{enumerate}
Since there is no preferred metric on $\partial \hs$, to discuss the topological convergence we can just take the hyperbolic ball model as a subset of Euclidean space and metric on $\partial \hs$ induced by the Euclidean metric. 
\begin{defi}
	A $k$-cone around $(\tx, \tpai)\in \gnmo \hs$ is the union of all unit-speed $k$-geodesics in $\hs$ tangent to $(\tx, v)$ where $v\in \tpai$. 	
\end{defi}

Let $N$ be a cusped hyperbolic $(n+1)$-manifold  and $\eta$ a small constant so that the complement of $\neta\coloneqq \{x\in N: \inj(x)\geq \eta\}$ in $N$ is a union of embedded cusp neighborhoods (each neighborhood is homeomorphic to $\T^{n}\times [0, \infty)$).
The thin part $N-\neta$ consists of cusps and cannot contain a quasi-geodesic. Thus for an \almostgeo hypersurface $S\hookrightarrow N$, $S \cap \neta \neq \emptyset $. 
Recall that by a sequence of hypersurfaces we mean a sequence of hypersurfaces that are distinct in the equivalence relation generated by homotopy and commensurability. 
\begin{prop}\label{ag implies asymptotically dense}
	Let $N$ be a finite-volume hyperbolic $(n+1)$-manifold. 
	A sequence of closed \ag hypersurfaces $S_i \hookrightarrow N$ is asymptotically dense in the Grassmann bundles $\gnmo N$.
\end{prop}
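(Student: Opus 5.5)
We argue by contradiction. Suppose some open set in $\gnmo N$ is missed by infinitely many $S_i$. After passing to a subsequence, there is $(x,\Pi)\in\gnmo N$ with $x\in\neta$, and an $\epsilon>0$, such that the $\epsilon$-ball $B_\epsilon(x,\Pi)\subset\gnmo N$ meets no $\gnmo S_i$. Set $k_i=\|\sff_{S_i}\|_{L^\infty}\to 0$; for large $i$ we have $k_i<1$. The strategy is to find lifts $\tsi\subset\hs$ that converge, in the Hausdorff sense recalled above, to a totally geodesic hyperplane $P$. We then show that $P$ must be one of the hyperplanes through lifts of $(x,\Pi)$, or at least one whose projection comes arbitrarily close to $(x,\Pi)$. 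This would force $\gnmo S_i$ to enter $B_\epsilon(x,\Pi)$ for large $i$, a contradiction.

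\textbf{Step 1: choosing basepoints.} Since $S_i$ is nearly geodesic, $S_i\cap\neta\neq\emptyset$. Pick $(x_i,\Pi_i)\in\gnmo S_i$ with $x_i\in\neta$. Lift $(x_i,\Pi_i)$ to $(\tx_i,\tpai_i)$ in a fixed compact fundamental region for $\neta$. By compactness, pass to a subsequence with $(\tx_i,\tpai_i)\to(\tx_\infty,\tpai_\infty)$, and let $P$ be the totally geodesic hyperplane tangent to $(\tx_\infty,\tpai_\infty)$.

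\textbf{Step 2: trapping $\tsi$ in a $k_i$-cone.} Every intrinsic geodesic of $\tsi$ from $\tx_i$ has geodesic curvature $\le k_i$. By \Cref{lemma_embeddedness}, $\tsi$ is a Cartan--Hadamard manifold, so $\exp_{\tx_i}$ is a diffeomorphism onto $\tsi$. Hence $\tsi$ is contained in the $k_i$-cone around $(\tx_i,\tpai_i)$. \Cref{controlling distance between k-geodesic and geodesic} gives $d(b(t),a(t))\le k_i(\cosh t-1)$. Therefore, on any fixed ball $B_R(\tx_\infty)$, the surface $\tsi$ lies within $k_i\cosh R+o(1)$ of $P$, and conversely each point of $P\cap B_R$ is that close to $\tsi$. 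This gives Hausdorff convergence $\tsi\to P$ on compact sets. Because curvature is bounded by $k_i\to0$, the tangent planes converge as well: if $\tsi\ni y_i\to y\in P$, then $T_{y_i}\tsi\to T_yP$. This uses that the tangent plane along a $k_i$-geodesic rotates by at most $k_i t$ relative to parallel transport.

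\textbf{Step 3: producing the contradiction.} Since $P$ is a totally geodesic hyperplane through a point of a lift of $\neta$, its projection to $N$ need not pass near $(x,\Pi)$. This is the main obstacle: a single limiting plane is not enough. To get past it, I would exploit the freedom in choosing the basepoints $(x_i,\Pi_i)$.

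My first attempt is to choose $(x_i,\Pi_i)$ to be the point of $\gnmo S_i\cap\gnmo\neta$ closest to $(x,\Pi)$. Rather than prove closed totally geodesic hypersurfaces are dense in this sense, I would instead use the flexibility of the cones. Points of $\tsi$ far from $\tx_i$ (distance $\sim\log(1/k_i)$) are still controlled, since the deviation bound $k_i\cosh t$ stays small for $t\le\frac12\log(1/k_i)$. Thus $\tsi$ is $C^1$-close to $P$ on balls of radius $R_i\to\infty$.

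The projection of $P$ to $N$ is an immersed totally geodesic hypersurface, possibly of infinite area. Its $\SONO$-orbit closure is described by Ratner/Shah: either a closed totally geodesic hypersurface or all of $\gnmo N$. Choose the basepoints so that the limit plane avoids the countably many closed ones. This is possible after re-basing along $\tsi$ using generic directions, because $\gnmo S_i$ has full-dimensional choices near $(x_i,\Pi_i)$. Then the projection of $P$ is dense in $\gnmo N$, so a bounded piece $P\cap B_R$ already passes within $\epsilon/2$ of $(x,\Pi)$. By Step 2, $\gnmo S_i$ passes within $\epsilon$ for large $i$, which is the desired contradiction.

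If avoiding closed limits proves delicate, the fallback is as follows. The closed case means $S_i$ converges to a closed totally geodesic $\Sigma$ on large balls. For $i$ large, nearest-point projection then makes $S_i$ homotopic (and commensurable) to a cover of $\Sigma$, which contradicts the distinctness of the $[S_i]$ for all but finitely many $i$.
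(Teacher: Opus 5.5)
Your Steps 1 and 2 are sound and match the paper's $k$-cone argument. The gap is in Step 3, in how you handle limit planes that project to closed totally geodesic hypersurfaces.

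\textbf{Re-basing does not change the limit plane.} The set $\mathcal{G}_n S_i$ is only $n$-dimensional, since the plane at $y$ is forced to be $T_yS_i$. Moreover, by your own Step 2, if $y_i\in\widetilde{S}_i$ stays at bounded distance from $\tilde{x}_i$, then $(y_i,T_{y_i}\widetilde{S}_i)$ subconverges to a point of $\mathcal{G}_nP$, so the limit plane is again $P$. A different limit plane requires basepoints escaping to infinity along $\widetilde{S}_i$. The planes obtainable that way are exactly those through points of the limit set $C$ of the sets $\mathcal{G}_nS_i$, and nothing you have said prevents every one of them from being closed.

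\textbf{The fallback does not settle the closed case.} $C^1$-convergence of $\widetilde{S}_i$ to $P$ on balls around one basepoint, even of radius $R_i\to\infty$, only says that a large chunk of $S_i$ wraps nearly around $\Sigma$. The rest of $S_i$ can leave every neighbourhood of $\Sigma$. Think of a cut-and-paste hypersurface, as in the paper's constructions, built from a large cover of $\Sigma$ and another totally geodesic hypersurface. So nearest-point projection onto $\Sigma$ is not defined on all of $S_i$, and it yields neither a homotopy nor commensurability.

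\textbf{What is missing.} You need to run the dichotomy on the whole limit set, as the paper does.
\begin{itemize}
\item After passing to a subsequence, the connected compact sets $\mathcal{G}_nS_j$ Hausdorff-converge to a closed connected set $C$.
\item Step 2 shows that through every point of $C$ the whole projected totally geodesic plane lies in $C$, so $C$ is $\mathrm{SO}(n,1)^{\circ}$-invariant.
\item If one of these planes has non-closed orbit, Ratner--Shah gives $C=\mathcal{G}_nN$; this is your dense-case argument, which is fine.
\item Otherwise $C$ is a union of the Grassmann lifts of countably many closed totally geodesic hypersurfaces. These are pairwise disjoint compact sets, so connectedness (Sierpi\'nski's theorem) forces $C=\mathcal{G}_n\Sigma$ for a single $\Sigma$.
\item Only now does Hausdorff convergence give $\mathcal{G}_nS_j\subset N_\epsilon(\mathcal{G}_n\Sigma)$ for all large $j$. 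That is, all of $S_j$ lies in a thin neighbourhood of $\Sigma$ with nearby tangent planes.
\item At that point your projection argument applies: it shows $S_j$ is homotopic to a finite cover of $\Sigma$, contradicting distinctness of the classes $[S_j]$.
\end{itemize}
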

\begin{proof}
	Recall that $\neta$ is a compact core of $N$ so that $N-\neta$ consists of cusp neighborhoods. A subsequence of $(p_i, \Pi_i)\in \gnmo S_i \cap \gnmo \neta$ converges to $(p, \Pi)\in \gnmo \neta$, which we denote by sub-index $j$. 
	
	Choosing a fixed fundamental domain $D \subset \hs$ for $N$, we lift $(p_j, \Pi_j)$ to $(\widetilde{p_j}, \widetilde{\Pi_j}) \in \gnmo D$ which converges to $(\widetilde{p}, \widetilde{\Pi})$. Let $\nu_j$ be the unit normal along $S_j$. Since $|\nabla_{S_j}\nu_j|=|\sff_{S_j}|\eqqcolon k_j$, the assumption $|\sff_{S_j}|_{L^\infty(S_j)}\to 0$ implies that the variation of normals is uniformly controlled on $S_j$. Thus the tangent planes to $(\widetilde{p_j}, \widetilde{\Pi_j})$  are uniformly close to the fixed hyperplane $P\subset \hs$ tangent to $(\widetilde{p}, \widetilde{\Pi})$. 
	Let $P_j$ be the (totally geodesic) hyperplane in $\hs$ tangent to $\widetilde{\Pi_j}$. For $j$ large, the nearest point projection $\pr_P^{\tsj}$ from $\tsj$ onto $P$ is a composition of first projection $\pr_{P_j}^{\tsj}$ onto $P_j$ and then projection $\pr_P^{P_j}$ from $P_j$ to $P$. The norm $k_j$ on the second fundamental forms of $\tsj$ controls the projection $\pr_{P_j}^{\tsj}$, and the distance $d_{\gnmo \hs}(\widetilde{\Pi_j}, \widetilde{\Pi})$ controls the projection $\pr_P^{P_j}$. The projection $\pr_P^{\tsj}$ is a diffeomorphism on any compact subset $K\cap \tsj$. Thus $\tsj$ is a graph of $P$ with small $C^1$ norm. 
	
We first prove that $\lim\limits_{j\rightarrow \infty}\partial \tsj =\partial P$. Set $\tilde{p}$ as the origin of the hyperbolic ball model.
	Take $q_j \in \partial \tsj$, where $\partial \tsj$ is a topological $(n-1)$-sphere. Since $\tsj\subset \hs$ with respect to the intrinsic metric is a Cartan-Hadamard space, there exists a one-to-one correspondence between the unit vectors in $T^1_{\tpj}{\tsj}$ and the points in $\partial \tsj$. There is also a unique intrinsic geodesic ray $[\widetilde{p_j},q_j]_{\tsj}(t)$ in $ \tsj$ such that $[\widetilde{p_j},q_j]_{\tsj}(0)=\widetilde{p_j}$ to $[\widetilde{p_j},q_j]_{\tsj}(\infty)=q_j$. By \Cref{controlling distance between k-geodesic and geodesic}, the collection of all $k_j$-geodesic rays tangent to vectors in $T^1_{\tpj}{\tsj}$ forms a $k$-cone centered at $(\tpj, \tpaij)$ and must contain $\tsj$. In particular, let $b_j$ be the geodesic of $\hs$ tangent to $[\widetilde{p_j},q_j]_{\tsj}$ at $\widetilde{p_j}$. \Cref{controlling distance between k-geodesic and geodesic} controls the hyperbolic distance between $[\widetilde{p_j},q_j]_{\tsj}(t)$ and $b_j(t)$. 
	
	If we parallel transport $b_j'(t)$ from $\tpj$ to a vector in $T_{\tilde{p}}P$, we obtain a vector $w_j$. Consider the geodesic $\beta_j(t)$ in $P$ tangent to $w_j$ such that $\beta_j(0)=\tilde{p}$. 
	By the triangle inequality, $d(\beta_j(t), [\widetilde{p_j},q_j]_{\tsj}(t))$ is controlled by $d(\tilde{p}, \tpj), k_j,$ and $t$. Taking the standard spherical distance $d_E$ on $\partial \hs \subset \mathbb{E}^{n}$, we see that $d_E([\widetilde{p_j},q_j]_{\tsj}(\infty), \partial P) \rightarrow 0$ as $j$ tends to infinity. This shows that every point of $\partial \tsj$ limits to a point on $\partial P$. Conversely, let $q \in \partial P$. There exists a unique geodesic ray $\beta(t)$ from $\tilde{p}$ to $q$, lying on $P$. If we parallel transport $\beta'(0)$ to a tangent vector at $T_{\tpj}\tsj$, the same argument above shows that $q$ is a limit point of a sequence of elements in $\partial \tsj$. 
	
	The remaining of the proof that asymptotically geodesic hypersurfaces are asymptotically dense in $\gnmo N$ is similar to \cite{alAsymptoticallyGeodesicsurfaces,hxhNearlyGeodesicFilling}, and thus we only briefly recall. Since $S_j$ is smoothly immersed, $\gnmo S_j$ is a closed, connected subset of $\gnmo N$, and thus converges to a closed, connected subset $ C$ of $\gnmo N$ in the Hausdorff metric. Since $\gnmo S_j$ are asymptotically geodesic, $C$ is foliated by totally geodesic hyperplanes. Since $C \subset \gnmo N$ is connected, $C$ is equal to one hyperplane or the entire $\gnmo N$ by Ratner-Shah theorem \cite{rmRatner'stheorem,snRatnerTheorem}. But if $C$ is one hyperplane $P$ (which must correspond to a properly immersed totally geodesic hypersurface), we deduce as in \cite{alAsymptoticallyGeodesicsurfaces,hxhNearlyGeodesicFilling} that for any $\epsilon>0$, there exists an $n_0\in \N$ such that for all $j>n_0$, 
	all $S_j$ are supported in an $\epsilon$-neighborhood of $P$, and thus must be eventually commensurable to $P$. 
\end{proof}
\Cref{ag implies asymptotically dense} generalizes \cite{alAsymptoticallyGeodesicsurfaces} and \cite{hxhNearlyGeodesicFilling}.

\begin{prop}\label{ag implies eventually strongly filling}
	A sequence of pairwise distinct classes of closed asymptotically geodesic hypersurfaces $S_i\hookrightarrow N$ is eventually strongly filling. 
\end{prop}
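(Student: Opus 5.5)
We argue by contradiction and combine the asymptotic density of \Cref{ag implies asymptotically dense} with a compactness argument on pairs of boundary points. Suppose infinitely many $S_i$ fail to be strongly filling; pass to that subsequence. For each such $i$ there are distinct points $p_i\neq q_i\in\partial_\infty\hs$ such that no lift $\tsi$ separates them, i.e. $p_i,q_i$ lie in the closure of the same component of $\partial_\infty\hs-\partial_\infty\tsi$ for every lift. (Since $\|\sff_{S_i}\|<1$ eventually, \Cref{lemma_embeddedness} guarantees that each $\partial_\infty\tsi$ is a topological $(n-1)$-sphere with exactly two complementary components.)

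\textbf{Step 1: normalize the bad pairs.} Let $\gamma_i$ be the geodesic joining $p_i$ and $q_i$. Using the action of $\pi_1(N)$ and compactness of a core $\neta$ (or of $N$ if closed), translate so that $\gamma_i$ passes within bounded distance of a fixed compact set. Translating the pair only permutes the lifts of $S_i$, so the non-separation property is preserved. In the cusped case, if $\gamma_i$ escapes into a cusp, I would instead choose a horoball-avoiding point, or treat parabolic endpoints separately. Passing to a subsequence, $p_i\to p$ and $q_i\to q$ with $p\neq q$, since the geodesics meet a fixed compact set.

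\textbf{Step 2: produce a separating lift.} Choose a totally geodesic hyperplane $P$ crossing the geodesic $[p,q]$ transversally at a point $x$, so that $p$ and $q$ lie in different open hemispheres of $\partial_\infty\hs-\partial P$, at positive spherical distance $\epsilon$ from $\partial P$. By \Cref{ag implies asymptotically dense}, $\gnmo S_i$ becomes dense in $\gnmo N$. Hence for large $i$ there is a point $(\tpj,\tpaij)$ on some lift $\tsi$ converging to $(x,T_xP)$.

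\textbf{Step 3: boundary convergence.} Invoke the argument in the proof of \Cref{ag implies asymptotically dense}. Using \Cref{controlling distance between k-geodesic and geodesic} and the $k$-cone trapping, the boundary $\partial_\infty\tsi$ converges to $\partial P$ in the Hausdorff sense on the sphere. The two complementary components then converge to the two hemispheres, since each is trapped between the $k_i$-cones about $\tsi$, which collapse onto $P$. Therefore, for $i$ large, $\partial_\infty\tsi$ lies in the $\epsilon/3$-neighborhood of $\partial P$, while $p_i$ and $q_i$ lie within $\epsilon/3$ of $p$ and $q$. Consequently $p_i$ and $q_i$ lie in different components of $\partial_\infty\hs-\partial_\infty\tsi$, which contradicts the choice of $(p_i,q_i)$.

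\textbf{Main obstacle.} The delicate point is Step 3's claim that the components, and not just the boundary spheres, converge. I would prove it by observing that the geodesic from $p_i$ to $q_i$ crosses $\tsi$ exactly once near $x$. Near $x$, $\tsi$ is a $C^1$-small graph over $P$, and far away $\tsi$ stays inside the thin cone around $P$ given by \Cref{controlling distance between k-geodesic and geodesic}. Since $\tsi$ separates $\hs$ and extends continuously to its boundary sphere, this single transverse crossing places the two endpoints in different components. A secondary subtlety is the normalization in Step 1 for noncompact $N$; it can be avoided by noting that for $N$ closed, which is the case relevant to \Cref{asymptotically geodesic implies virtually special}, cocompactness suffices.
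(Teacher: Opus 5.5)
Your proposal is correct and follows essentially the paper's argument: contradiction, normalizing the bad geodesics through the compact core, asymptotic density giving lifts $\tsi$ that converge to a hyperplane $P$ crossing the limit geodesic, and $\partial_\infty\tsi\to\partial P$ forcing $p_i,q_i$ apart. In the cusped case no special treatment is needed, since every complete geodesic meets $\neta$. Your single-transverse-crossing argument usefully fills in the linking step that the paper only asserts (Hausdorff convergence of the limit spheres alone would not give it); note, though, that the far-field trapping needs a uniform visual-angle bound for curves of curvature $\le k<1$ (e.g.\ comparison with hypercycles, or El Emam--Seppi's uniform quasi-isometric embedding), because the estimate $k(\cosh t-1)$ of \Cref{controlling distance between k-geodesic and geodesic} degenerates as $t\to\infty$.
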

\begin{proof}
	Suppose not. Then we extract a subsequence $S_j$ which is asymptotically dense in $\gm N$ but not strongly filling. For each $S_j$, all its lifts in $\hs$ do not separate $p_j, q_j\in \partial_{\infty} \hs$. Let $\gamma_j \coloneqq \pi([p_j,q_j])$. Take $v_j \in T^1\gamma_j \cap \neta$ which converges up to a subsequence to $v \in T^1\neta$ by the compactness of $\neta$. In the universal cover, there is a lift $\tilde{v}$ tangent to a geodesic $\tgamma$. The sequence $S_j$ is asymptotically dense and asymptotically geodesic, and thus admits a subsequence of lifts $\widetilde{S_j}$ which asymptotically converge to a hyperplane $P$ orthogonal to $\tgamma$ somewhere. 
	
	By  the previous proof, $\partial\widetilde{S_j}$ converges to $\partial P$. 
	 Thus the limit set of $\widetilde{S_j}$ must eventually be linked with the endpoints of $\tgamma$. Hence for sufficiently large $j$, $S_j$ separates the endpoints of $\gamma_j$, which is a contradiction. 
\end{proof}
\begin{rem}
	In a Euclidean $3$-torus $\T^3$, by taking advantage of the scaling in the universal cover, we can construct a sequence of \ag closed minimal surfaces in $\T^3$. Thus its principal curvature spectrum has no gap at $0$. Its fundamental group   $\pi_1(\T^3)\cong \Z^3$ is also linear over $\Z$. 
\end{rem} 
Recall that in \Cref{principal curvature spectrum}, we define a functional on the set of classes of closed, smoothly immersed hypersurfaces in $M$ by $
	\Lambda_M(	[S])=   \inf\limits_{S'\in[S]} \|\sff_{S'}\|_{L^{\infty}}\eqqcolon \sff_{[S]}.$
\begin{lem}\label{zeros of spectrum}
	If $\sff_{[S]}=0$, then $[S]$ contains a totally geodesic representative. 
\end{lem}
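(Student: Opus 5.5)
The plan is to take a minimizing sequence inside the single class $[S]$ and show it converges to a totally geodesic hypersurface homotopic (up to commensurability) to $S$. Choose representatives $S'_i\in[S]$ with $\|\sff_{S'_i}\|_{L^\infty}=k_i\to 0$; all are nearly geodesic for large $i$. Up to commensurability they correspond to one conjugacy class of subgroup $H\le \pi_1(M)$, namely a finite-index subgroup of $\iota_*\pi_1(S)$, which we may shrink to a common finite-index subgroup. By \Cref{nearly geodesic hypersurfaces}, $H$ is injective, quasi-convex and codimension-$1$. Fix the $H$-invariant lifts $\widetilde{S'_i}\subset\hs$. By \Cref{lemma_embeddedness}, each is a properly embedded topological sphere-bounded hypersurface whose limit set $\partial_\infty\widetilde{S'_i}$ is a topological $(n-1)$-sphere. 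Because the $S'_i$ are homotopic after passing to finite covers, these limit sets all coincide with the limit set $\Lambda_H$ of the quasi-convex subgroup $H$. The quasi-isometric embeddings $\widetilde{S'_i}\to\hs$ extend to the boundary, and a homotopy moves points a bounded distance, so the boundary maps agree.

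The key step is to show $\Lambda_H$ is a round sphere $\partial P$. Fix a basepoint $\tx_i\in \widetilde{S'_i}$ in a fixed compact fundamental region. This is possible because $M$ is closed, and we translate by $\pi_1(M)$ if necessary, which replaces $H$ by a conjugate; that is harmless, since only finitely many conjugates meet a given compact set. Pass to a subsequence where $(\tx_i,\tpai_i)\to(\tx,\tpai)$ and $H$ is fixed. The argument in the proof of \Cref{ag implies asymptotically dense} uses $k$-cones and \Cref{controlling distance between k-geodesic and geodesic}. It gives $\partial_\infty\widetilde{S'_i}\to\partial P$ in the Hausdorff sense, where $P$ is the hyperplane tangent to $(\tx,\tpai)$. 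Since $\partial_\infty\widetilde{S'_i}=\Lambda_H$ for all $i$, we get $\Lambda_H=\partial P$. I expect this to be the main obstacle. One needs care that the cone estimate $k_i(\cosh t-1)$, although unbounded in $t$, still forces the endpoints of the intrinsic rays to converge to the endpoints of the corresponding geodesic rays in $P$. I would handle this by cutting at $t_i$ with $k_i\cosh t_i\to 0$ but $t_i\to\infty$. The intrinsic rays are uniform quasi-geodesics, with constants uniform because $k_i\le 1/2$. By the Morse lemma, the visual angle at $\tx_i$ between the ray's endpoint and its point at time $t_i$ then tends to $0$.

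Once $\Lambda_H=\partial P$, $H$ preserves the convex hull of its limit set, which is $P$. Since $H$ is torsion-free and quasi-convex with limit set an $(n-1)$-sphere of dimension equal to that of $\partial\ts$, it acts cocompactly on $P$ (convex cocompactness together with full limit set in $\partial P$). Hence $\Sigma=H\backslash P$ is a closed totally geodesic hypersurface immersed in $M$ via $P\to\hs\to M$. Finally, $H\backslash\widetilde{S'_i}$ and $H\backslash P$ are both $K(H,1)$'s mapping into $M$ inducing the same map on $\pi_1$, so they are homotopic as maps into $M$. Thus $\Sigma$ is homotopic to a finite cover of $S$, and $\Sigma\in[S]$, as required.
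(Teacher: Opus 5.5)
Your proposal is correct and follows essentially the same route as the paper: a minimizing sequence in $[S]$, the $k_i$-cone estimate from the proof of \Cref{ag implies asymptotically dense} forcing $\partial\widetilde{S'_i}\to\partial P$, and homotopy plus commensurability making these limit sets all equal, hence equal to $\partial P$. You also supply steps the paper leaves implicit: choosing lifts so the limit sets literally coincide, the cutoff $t_i$ with the Morse lemma to control the tails, and convex cocompactness giving the closed quotient $H\backslash P$. One small correction: you need not find a single finite-index subgroup common to all $i$, since commensurable subgroups already have the same limit set.
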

\begin{proof}
	Let $S_i\hookrightarrow M$ be a sequence of surfaces homotopic and  commensurable to $S$, such that $k_i\coloneqq \|\sff_{S_i}\|_{L^{\infty}(S_i)}\to 0$. Let $(x_i,\tpai_i)\in \gnmo S_i$. Then up to taking a subsequence, $(x_j, \tpai_j)$ converges to $(x,\tpai)\in \gnmo M$. Taking lifts in $\hs$, we have $(\txj, \tpaij)\to (\tx, \tpai)$. By using $k_j$-cones centered at $(\txj, \tpaij)$ and ideas in the proof of \Cref{ag implies asymptotically dense}, we again have that $\partial \tsj \to P$ for some totally geodesic plane $P$. The assumption that $S_j$ homotopic and commensurable to $S$ implies that $\partial\tsj$ are all equal. Thus the only possibility is that $\partial\tsj=\partial P$ and $S_j$ is homotopic to a closed totally geodesic hypersurface. 
\end{proof}
We now prove \Cref{asymptotically geodesic implies virtually special} and  \Cref{spectrum and strongly filling}. 
\begin{cor}
	Let $M$ be a closed hyperbolic manifold. There exists a constant $C< 1$ depending on $M$ such that if $\image(\Lambda_M) \cap (0,C]$ is non-empty or if $0$ is of infinite multiplicity in $\image(\Lambda_M)$, then $\pi_1(M)$ is virtually special, and every closed hypersurface $S$ such that $\sff_{[S]}<1$ is virtually embedded and virtually homologically injects. 
\end{cor}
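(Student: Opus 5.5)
The plan is to reduce both hypotheses to the existence of a single nearly geodesic, strongly filling hypersurface, and then apply \Cref{strongly filling applications} and \Cref{strongly filling implies virtual homology injects}.

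First I fix $C$. I argue by contradiction that there is $C=C_M<1$ such that every class with $0<\sff_{[S]}\le C$ contains a strongly filling representative. Suppose instead there are classes $[S_i]$ with $0<\sff_{[S_i]}\le 1/i$, none strongly filling. Choose representatives $S_i$ with $\|\sff_{S_i}\|_{L^\infty}\to 0$. I then split into two cases.

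\emph{Case 1: the classes $[S_i]$ are eventually pairwise distinct.} Then $\{S_i\}$ is a sequence of asymptotically geodesic hypersurfaces in the sense of the paper. By \Cref{ag implies asymptotically dense} and \Cref{ag implies eventually strongly filling}, all but finitely many $S_i$ are strongly filling, which is a contradiction.

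\emph{Case 2: some class $[S]$ occurs infinitely often.} Then $\sff_{[S]}=0$, so by \Cref{zeros of spectrum} the class $[S]$ contains a totally geodesic representative. This contradicts $\sff_{[S_i]}>0$.

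Hence $C_M$ exists, and we may shrink it so that $C_M<1$. Strong filling depends only on the boundary spheres $\partial\tilde S$ of the lifts, which are constant across a homotopy/commensurability class. So the whole class is strongly filling, not just a representative. This gives \Cref{spectrum and strongly filling}.

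Next I treat the alternative hypothesis, where $0$ has infinite multiplicity in $\image(\Lambda_M)$. By \Cref{zeros of spectrum} this yields infinitely many distinct classes containing totally geodesic representatives. These form an asymptotically geodesic sequence ($\sff\equiv 0$), so \Cref{ag implies eventually strongly filling} again provides a strongly filling one.

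In either case we obtain a hypersurface $S$ with $\|\sff_S\|_{L^\infty}<1$ that is strongly filling. By \Cref{nearly geodesic hypersurfaces}, $\pi_1(S)$ is $\pi_1$-injective, quasi-convex and codimension-$1$. By \Cref{strongly filling hypersurface give sf groups} it is a strongly filling subgroup. Then \Cref{strongly filling applications} (via \Cref{bergeron-wise}, \Cref{cube complex cocompact} and \Cref{virtual haken conjecture}) gives that $\pi_1(M)$ is virtually special.

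For an arbitrary $S$ with $\sff_{[S]}<1$, I pick a representative with $\|\sff\|<1$. By \Cref{nearly geodesic hypersurfaces}, $\pi_1(S)$ is quasi-convex. \Cref{virtually special virtual retract} (with an empty peripheral collection, since $M$ is closed) then makes it a virtual retract. \Cref{retraction implies injection} gives virtual homological injection. Separability of virtual retracts combined with \Cref{separable and virtually embedded}, applied to the compact subset $S\subset \tm/\pi_1(S)$, gives virtual embedding.

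The main obstacle is the first step. \Cref{ag implies eventually strongly filling} is stated for sequences of pairwise distinct classes, so the contradiction argument must carefully separate the repeated-class case. That case relies on \Cref{zeros of spectrum}, whose own proof needs the limiting boundary argument of \Cref{ag implies asymptotically dense} to apply to a fixed class. I would first check that the convergence $\partial\tilde S_j\to\partial P$ there holds uniformly enough to handle a common limit set.
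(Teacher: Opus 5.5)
Your proposal is correct and follows the same route as the paper. You extract a uniform threshold $C_M$ from \Cref{ag implies eventually strongly filling} by contradiction, then feed a nearly geodesic strongly filling representative into \Cref{nearly geodesic hypersurfaces}, \Cref{strongly filling applications} and \Cref{virtually special virtual retract}; your version is more careful than the paper's in restricting the threshold to $0<\sff_{[S]}\le C_M$ and in deriving virtual embedding from separability of virtual retracts via \Cref{separable and virtually embedded}. The obstacle you flag is not real, and neither the repeated-class case nor the infinite-multiplicity case (where you can simply pick representatives with $\|\sff_{S_i}\|\le 1/i$) needs \Cref{zeros of spectrum}: a class occurring infinitely often would satisfy $\sff_{[S]}\le 1/i$ for infinitely many $i$, hence $\sff_{[S]}=0$, which contradicts $\sff_{[S]}>0$ at once.
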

\begin{proof}
		Let $S_i$ be  a sequence of asymptotically geodesic hypersurfaces in $M$ (equivalently, $\sff_{[S_i]}\rightarrow 0$). All but finitely many $S_i$ are strongly filling by \Cref{ag implies eventually strongly filling}. There is a subsequence $S_j$ such that for all $j$, either $\sff_{[S_j]}>0$ but $\sff_{[S_j]}\rightarrow 0$, or $\sff_{[S_j]}=0$. 		
		
		Thus there exists a constant $C$ such that  if $\sff_{[S]}<C$, then $S$ is strongly filling. By \Cref{nearly geodesic hypersurfaces}, if $S \hookrightarrow M$ is a closed hypersurface such that $\sff_{[S]}<1$, then $\pi_1(M)$ is quasi-convex and codimension-$1$.  The conclusion now follows from \Cref{strongly filling applications}. 
\end{proof}

\section{Constructions}\label{constructions}
This section discusses the proof of \Cref{arithmetic of type I contains nice hypersurfaces}, which is derived from the following theorem. 

\begin{theorem}\label{thm_main_of_sec4}
	Let $M$ be a closed arithmetic hyperbolic $(n+1)$-manifold of type I. Then $M$ contains a sequence of \poi hypersurfaces which are asymptotically geodesic, equidistributing in the Grassmann bundle $\gm M$, and not homotopic to totally geodesic hypersurfaces. 
\end{theorem}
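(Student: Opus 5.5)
The construction follows the outline in the introduction: cut and reglue pairs of totally geodesic hypersurfaces meeting at angle close to $\pi$, smooth along the intersection, and use equidistribution of totally geodesic hypersurfaces. Since $M$ is arithmetic of type I, it contains infinitely many closed immersed totally geodesic hypersurfaces, and these equidistribute in $\gm M$ by Ratner--Shah/Mozes--Shah.

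\emph{Choosing the pairs.} First we produce, for each $\epsilon_i\to 0$, a pair $\Sigma_1^i,\Sigma_2^i$ of totally geodesic hypersurfaces (possibly one self-intersecting hypersurface) that meet transversally along a codimension-$2$ locus at angle $\theta_i\in(\pi-\epsilon_i,\pi)$. To get this, fix one totally geodesic $\Sigma_1$ and let $\Sigma_2^i$ range over an equidistributing sequence. By equidistribution, $\gm\Sigma_2^i$ eventually comes arbitrarily close to any point of $\gm\Sigma_1$. Choose $(x,T_x\Sigma_1)$ with $x\in\Sigma_1$. Then a lift of $\Sigma_2^i$ passes near $x$ with tangent plane nearly equal to $T_x\Sigma_1$ but distinct from it, since the hypersurfaces are distinct. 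Two distinct hyperplanes in $\hs$ that are nearly coincident near a point either intersect at a small angle or are disjoint. To force intersection, pick target planes in $\gm M$ slightly tilted from $T\Sigma_1$ along a fixed direction, so the limiting configuration meets $\Sigma_1$ transversally at angle $\delta_i\to 0$. Equivalently the dihedral angle is $\pi-\delta_i$, after choosing co-orientations. Next apply [\citenum{bhwHyperplaneSection}, Corollary 1.6] to pass to a finite cover $\hat M$ where $\hat\Sigma_1,\hat\Sigma_2$ are embedded with connected intersection $L$. Then cut along $L$ and reglue the four half-pieces so that the sheets meeting at exterior angle $\delta_i$ are joined. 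This gives a closed pleated hypersurface $\hat S_0$ with bending angle $\delta_i$ along $L$. Using a tubular neighbourhood of $L$, whose width is uniform for $\hat\Sigma_j$ by the tubular neighborhood theorem, we smooth the bend. In Fermi coordinates normal to $L$ we replace the corner by a curve of geodesic curvature $\lesssim\delta_i/w$ interpolating over width $w$, where $w$ is fixed (say half the injectivity normal radius of $L$). The resulting $S_i\to\hat M\to M$ has $\|\sff\|_{L^\infty}\lesssim\delta_i/w\to 0$. We must check that $w$ can be taken uniform in $i$, or that $\delta_i/w_i\to0$; this is the main obstacle, since $L$ could have small normal injectivity radius. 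I would handle it by choosing $\Sigma_2^i$ and the cover so that the angle decays faster than the width. The width of a tube about an embedded $L$ in $\hat M$ is at least a constant depending on its area, so we first fix $\delta$-independent data where possible, or shrink $\delta_i$ after the cover is chosen by varying only $\Sigma_2$.

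\emph{Properties.} For $\|\sff\|<1$, \Cref{nearly geodesic hypersurfaces} gives $\pi_1$-injectivity, and asymptotic geodesicity holds by construction. For equidistribution, $\gm S_i$ agrees with $\gm\hat\Sigma_1\cup\gm\hat\Sigma_2$ outside a neighbourhood of $L$ of width $w$, and inside it the tangent planes are $O(\delta_i)$-close to those of $\hat\Sigma_j$. Passing to covers does not change normalized measures pushed to $\gm M$. Taking areas of $\Sigma_2^i\to\infty$ makes the measure dominated by $\Sigma_2^i$, which equidistributes, while the smoothing region has relative area $O(\area(L)w/\area(\Sigma_2^i))$. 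We need this ratio $\to0$; alternatively, if both pieces are equidistributing, any convex combination also equidistributes. Finally, for non-totally-geodesic: smoothing strictly decreases area relative to the pleated $\hat S_0$, whose area equals $\area(\hat\Sigma_1)+\area(\hat\Sigma_2)$. If $S_i$ were homotopic to a totally geodesic $\Sigma$, then \Cref{totally geodesic area-minimizing} would give $\area(\Sigma)\le\area(S_i)<\area(\hat S_0)$. Conversely, $\Sigma$ and $\hat S_0$ have the same boundary at infinity lifts, i.e. the same group; I would show that $\hat S_0$'s group contains elements of $\pi_1\hat\Sigma_1$ and $\pi_1\hat\Sigma_2$ not preserving a common hyperplane, so no totally geodesic representative exists. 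This directly contradicts total geodesicity and bypasses the area comparison if needed.
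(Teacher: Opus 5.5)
There is a genuine gap in the equidistribution step. The overall architecture matches the paper's: sheets meeting at dihedral angle near $\pi$, lifting via [\citenum{bhwHyperplaneSection}, Corollary 1.6], cut-and-reglue, and smoothing in normal $2$-planes with curvature $\lesssim$ exterior angle$/$width. But with $\Sigma_1$ fixed, the normalized measure of $S_i$ is, up to the smoothing error, $\frac{\area(\hat\Sigma_1)\,\mu_{\Sigma_1}+\area(\hat\Sigma_2^i)\,\mu_{\Sigma_2^i}}{\area(\hat\Sigma_1)+\area(\hat\Sigma_2^i)}$. Since $\mu_{\Sigma_1}$ is supported on the closed set $\gm\Sigma_1$ and is singular to $\mu_{\mathcal L}$, you need $\area(\hat\Sigma_1)/\area(\hat\Sigma_2^i)\to 0$. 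Nothing gives this. The cover $\hat M$ from Corollary 1.6 depends on $i$, and the degree of $\hat\Sigma_1\to\Sigma_1$ is uncontrolled relative to that of $\hat\Sigma_2^i\to\Sigma_2^i$, so letting $\area(\Sigma_2^i)\to\infty$ is not enough. The remedy you mention in passing (both pieces equidistributing) is the right one, but your pair selection does not deliver it. The paper takes both sheets at a transverse self-intersection of a single $\Sigma_i$ from an equidistributing sequence. \Cref{lem_angle density} shows that large $\Sigma_i$ self-intersect at angles arbitrarily close to any prescribed value, in particular to $\pi$. The pleated hypersurface then pushes forward to exactly $\mu_{\Sigma_i}$, whatever the cover degrees. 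Once that is done, your observation that the smoothing moves points by $O(w\delta_i)$ and tangent planes by $O(\delta_i)$ already controls the smoothing region. So you do not need $\area(L)\,w/\area(S_i)\to 0$; the paper instead arranges that ratio with covers of large degree over $\Sigma_i$ but bounded degree over $L$.

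The width issue you flag is real, and neither of your proposed remedies works. A Basmajian-type collar bound degrades to $0$ as $\area(L)$ grows, so it gives nothing uniform in $i$. And you cannot shrink $\delta_i$ after fixing the cover by varying only $\Sigma_2$, since $L$, the cover, and its collar all depend on $\Sigma_2$. What you need is a uniform lower bound on the normal injectivity radius of $L$ inside each $\hat\Sigma_j$, equivalently on the distance between distinct lifts of the bending locus on a lift of $\hat S_0$. The missing ingredient is separability of the totally geodesic subgroup $\pi_1(L)$ in the type I arithmetic lattice $\pi_1(\hat\Sigma_j)$ [\citenum{bhwHyperplaneSection}]. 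It gives finite covers of $\hat\Sigma_j$ to which $L$ lifts isometrically (and stays nonseparating), with collar width above any prescribed constant. Such covers change neither the bending angle nor the normalized measures, so a fixed $w$ yields $\|\sff_{S_i}\|\lesssim\delta_i/w\to 0$; this is how the paper gets its uniform radius $r$.

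Your area chain $\area(\Sigma)\le\area(S_i)<\area(\hat S_0)$ is not by itself a contradiction. The paper closes it with $\area(\hat S_0)=v_n\|\hat S_0\|=\area(\Sigma)$, using homotopy invariance of simplicial volume and the fact that $\hat S_0$ is intrinsically hyperbolic (\Cref{supporting real hyperbolic}). Your fallback argument does work, and it is a genuinely different, cleaner route. The $\pi_1$-images of the two cut-open pieces are non-elementary, with limit sets not contained in any round $(n-2)$-sphere of $\partial P_1$, respectively $\partial P_2$. Hence the only hyperplanes they preserve are the adjacent faces $P_1\neq P_2$, so no hyperplane is preserved by the whole group. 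Likewise, deducing $\pi_1$-injectivity from $\|\sff\|<1$ via \Cref{nearly geodesic hypersurfaces}, instead of the convex-hull argument of \Cref{prop_pi1 injectivity}, is fine; the paper itself notes this alternative.
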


First, in \Cref{pairs intersecting}, we prove that all but finitely many pairs of \ag hypersurfaces intersect, providing the foundation for subsequent construction. The above theorem is then proved in Sections~\ref{subsection_construction of pi1-injective hypersurface}-\ref{subsection_asymptotically geodesic}.
In \Cref{subsection_construction of pi1-injective hypersurface}, we present the construction and prove the $\pi_1$-injectivity. The equidistribution and asymptotically geodesic property are established in \Cref{subsection_equidistribution} and \Cref{subsection_asymptotically geodesic}, respectively. 

Here we restate and prove \Cref{1.2}. 
\begin{prop}\label{pairs intersecting}
	Let $N$ be a finite-volume hyperbolic manifold and $S_i$ a sequence of closed \poi \ag hypersurfaces. Then all but finitely many pairs of $S_i, S_j$ intersect into a codimension-$2$ submanifold that is strongly filling in $S_i$ or $S_j$. 
\end{prop}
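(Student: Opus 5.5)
We first show that all but finitely many pairs intersect, and then show that the intersection locus is strongly filling in one of the two hypersurfaces. For the first step, \Cref{ag implies asymptotically dense} gives that $\gnmo S_i$ becomes $\epsilon$-dense in $\gnmo \neta$ for all $i\geq i_0(\epsilon)$. Fix $S_j$. It is nearly geodesic, so by \Cref{lemma_embeddedness} each lift $\tsj$ is properly embedded and separates $\hs$. Pick $(x,\Pi)\in \gnmo S_j\cap \gnmo\neta$ and a plane $\Pi'$ transverse to $\Pi$ at angle about $\pi/2$. For $i$ large, some lift $\tsi$ passes $\epsilon$-close to $(\tx,\Pi')$. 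By the $k_i$-cone estimate of \Cref{controlling distance between k-geodesic and geodesic}, $\tsi$ stays close to the hyperplane $P'$ tangent to $\Pi'$ on a large compact ball. Likewise $\tsj$ is close to $P$ there. Two nearly orthogonal $C^1$-graphs over $P$ and $P'$ must cross transversally, so $S_i\cap S_j\neq\emptyset$, and the intersection is a codimension-$2$ submanifold (after a generic perturbation, or directly by transversality of the graphs). If both indices are allowed to grow, we use the same argument with both $S_i,S_j$ large, passing to a subsequence where the base points converge.

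For strong filling, we work inside one hypersurface, say $S_i$, with $i$ large and $j$ fixed, or both large. Intrinsically, $\tsi$ is a Cartan--Hadamard manifold with pinched negative curvature (\Cref{lemma_embeddedness}), close to $\mathbb{H}^n$ when $k_i\to 0$. Its boundary $\partial\tsi\cong S^{n-1}$ is identified with $\partial_\infty \tsi\subset\partial\hs$ via the quasi-isometric embedding. The lifts of $S_i\cap S_j$ to $\tsi$ are the sets $\tsi\cap g\tsj$ for $g\in\pi_1(N)$, and each such set has boundary $\partial\tsi\cap\partial(g\tsj)$ in the limit. Given distinct $p,q\in\partial\tsi$, we take the geodesic $\gamma$ of $\hs$ joining them; it stays near $\tsi$. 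We then need a translate $g\tsj$ that crosses $\tsi$ nearly orthogonally to $\gamma$ at a point of $\gamma$ in the thick part. For this we apply asymptotic density of $S_j$ if $j\to\infty$, or of $S_i$ relative to $S_j$ by symmetry. The proof of \Cref{ag implies asymptotically dense} shows that $\partial(g\tsj)\to\partial P$ for a hyperplane $P$ orthogonal to $\gamma$, so $\partial(g\tsj)$ links $p,q$. Because $\partial\tsi$ is close to a round sphere, $\partial\tsi\cap\partial(g\tsj)$ is an $(n-2)$-sphere separating $p$ from $q$ in $\partial\tsi$. This is the strong filling of the codimension-$2$ locus in $S_i$, following the template of \Cref{ag implies eventually strongly filling}.

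The main obstacle is that this argument needs uniformity in both indices. Strong filling requires a single pair $(i,j)$ to work for \emph{all} $p,q$, not merely a density statement along a subsequence. We would argue by contradiction, as in \Cref{ag implies eventually strongly filling}. Suppose infinitely many pairs $(i_m,j_m)$ fail, witnessed by points $p_m,q_m$. We project the geodesics $[p_m,q_m]$ to $\neta$ and extract convergent tangent vectors. Ratner--Shah equidistribution, through \Cref{ag implies asymptotically dense}, then supplies lifts of $S_{j_m}$ converging to a hyperplane orthogonal to the limit geodesic, contradicting the failure of linking. The delicate point is that when only one index grows we only get strong filling in that hypersurface, which is why the statement says ``in $S_i$ or $S_j$''. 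We also need to control the case where $p_m,q_m$ collapse together; here we would renormalize via the $\pi_1(S_{i_m})$-action using cocompactness of $S_{i_m}$.
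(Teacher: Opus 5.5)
\textbf{There is a genuine gap: you have assigned the two hypersurfaces the wrong roles.}

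The separating sets you use are translates $g\widetilde{S}_j$ whose boundaries link $p,q$. That is exactly strong filling of $S_j$ in $N$. So your mechanism gives filling of the locus \emph{in $S_i$} only when $S_j$ is strongly filling, i.e.\ when $j$ is large.

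Your closing claim, that when only one index grows you get filling in the growing hypersurface, is therefore backwards. Take $j$ fixed and $S_j$ not strongly filling (e.g.\ embedded). Then there are pairs of points of $\partial\hs$ linked by no lift of $S_j$, and your argument says nothing when such a pair lies in $\partial\widetilde{S}_i$.

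Suppose you reverse the roles: filling in the fixed $S_j$, separated by translates of the large $S_i$. Then your step ``$\partial\widetilde{S}_i$ is close to a round sphere, so the intersection is an $(n-2)$-sphere separating $p$ from $q$'' no longer applies. For a fixed index, $\partial\widetilde{S}_j$ is not close to round, and $S_j$ need not even be nearly geodesic. So the infinitely many pairs with one index bounded are not covered. The ``uniformity in both indices'' you flag comes from this approach, not from the problem itself.

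\textbf{How the paper's proof avoids this.} It uses \Cref{ag implies eventually strongly filling} as a statement about a single hypersurface: there is $i_0$ such that $S_i$ is strongly filling for all $i\ge i_0$. Strong filling already quantifies over every pair $p\ne q\in\partial\hs$. So you need no sequence of bad pairs $(i_m,j_m)$, no rerun of the Ratner--Shah argument, and no renormalization. (Your renormalization via $\pi_1(S_{i_m})$ and cocompactness of $S_{i_m}$ would give no uniform control as $i_m\to\infty$ anyway; the proof of \Cref{ag implies eventually strongly filling} normalizes with $\pi_1(N)$.)

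The argument is then as follows. Take a pair with $i\ge i_0$ and any $p\ne q\in\partial\widetilde{S}_j$, and choose $g$ with $p,q$ in different components of $\partial\hs-\partial(g\widetilde{S}_i)$. Since $\partial\widetilde{S}_j-\partial(g\widetilde{S}_i)\subset\partial\hs-\partial(g\widetilde{S}_i)$, they also lie in different components of $\partial\widetilde{S}_j-\partial(g\widetilde{S}_i)$. Hence the lift $\widetilde{S}_j\cap g\widetilde{S}_i$ of $S_i\cap S_j$ separates them in $\widetilde{S}_j$.

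This step is purely topological and needs no roundness of $\partial\widetilde{S}_j$. It covers every pair with $\max(i,j)\ge i_0$, which is all but finitely many pairs, and it gives filling in $S_j$, the hypersurface whose translates are \emph{not} doing the separating.

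Nonemptiness of the intersection also comes for free. The set $g\widetilde{S}_i$ separates $\hs$ (\Cref{lemma_embeddedness}), and the connected set $\widetilde{S}_j$ accumulates on both $p$ and $q$. So your separate density argument for the intersection is unnecessary.
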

\begin{proof}
	By \Cref{ag implies eventually strongly filling}, all but finitely many closed \ag hypersurfaces $S_i$ are strongly filling. Let $S_i$ be a strongly filling hypersurface. Since any $S\coloneqq S_j$ is \poin, for every $\gamma \in \pi_1(S)$ and one of its lifts $\tgamma \subset \hs$, there exists a lift $\tsi \subset \hs$ such that $\partial \tsi $ separates $\partial \tgamma$. Denote a lift of $S$ which contains $\tgamma$ as $\ts$. Since $\tgamma(\infty)$ and $\tgamma(-\infty)$ belong to different components of $\partial \hs -\partial \tsi$, they must also belong to different components of $\partial \ts-\partial \tsi$. In other words, $\ts \cap \tsi$ is a codimension-$1$ submanifold of $\ts$ which separates $\tgamma(\infty)$ and $\tgamma(-\infty)$. This proves that $S_i\cap S$ is a strongly filling submanifold of $S$.
	
\end{proof}

\subsection{The $\pi_1$-injectivity}\label{subsection_construction of pi1-injective hypersurface}

In this subsection, we establish the construction of hypersurfaces in \Cref{thm_main_of_sec4} and prove the $\pi_1$-injectivity. 

\subsubsection{Lifting a pair of intersecting totally geodesic hypersurfaces to suitable finite covers}
\begin{lem}
Let $M$ be a closed arithmetic hyperbolic manifold of type I. 
    After passing to a finite cover $\hat{M}$ of $M$ if necessary, there are two intersecting embedded totally geodesic lifts $\hat{\Sigma}_1$ and $\hat{\Sigma}_2$ in $\hat{M}$ such that $L=\hat{\Sigma}_1\cap\hat{\Sigma}_2$ is transverse and nonseparating in both $\hat{\Sigma}_1$ and $\hat{\Sigma}_2$.
\end{lem}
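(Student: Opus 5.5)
We first need two closed immersed totally geodesic hypersurfaces $\Sigma_1,\Sigma_2\hookrightarrow M$ that intersect transversally. Since $M$ is arithmetic of type I, it contains infinitely many pairwise non-commensurable closed totally geodesic hypersurfaces: rational hyperplanes for the defining admissible form $f$, i.e.\ orthogonal complements of $K$-rational vectors $v$ with $f(v)>0$. By \Cref{ag implies eventually strongly filling}, applied to this sequence (which is trivially asymptotically geodesic), all but finitely many of them are strongly filling. So we may pick $\Sigma_1$ strongly filling and $\Sigma_2$ not commensurable to $\Sigma_1$. Their lifts then cannot all be disjoint or tangent. Two distinct hyperplanes in $\hs$ that meet do so transversally in a codimension-$2$ plane, so $\Sigma_1\pitchfork\Sigma_2\neq\emptyset$. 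Alternatively, one can exhibit two rational hyperplanes $v_1^\perp, v_2^\perp$ whose Gram matrix restricted to $\tn{span}(v_1,v_2)$ is positive definite; the hyperplanes then meet transversally in $\hs$. Any such pair also lets us prescribe the angle later, which is useful for the rest of the section.

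Next apply [\citenum{bhwHyperplaneSection}, Corollary 1.6], stated above, to $F_1=\Sigma_1$, $F_2=\Sigma_2$. This gives a finite cover $\mhat$ and components $\hat\Sigma_1,\hat\Sigma_2$ of the preimages that are embedded, with $L=\hat\Sigma_1\cap\hat\Sigma_2$ connected and nontrivial in $H_*(\mhat)$. Transversality is local, so it is inherited from the transversality of the lifted hyperplanes in $\hs$. Hence $L$ is a closed embedded totally geodesic codimension-$1$ submanifold of each $\hat\Sigma_i$.

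It remains to show that $L$ is nonseparating in each $\hat\Sigma_i$, which I expect to be the main obstacle. My first attempt is homological: if $L$ separated $\hat\Sigma_1$, then $L$ would bound a compact region in $\hat\Sigma_1$, so $[L]=0$ in $H_{n-1}(\hat\Sigma_1;\Z)$, and pushing forward would give $[L]=0$ in $H_{n-1}(\mhat;\Z)$. This contradicts the conclusion of Corollary 1.6, provided "non-trivial in $H_*$" means the fundamental class $[L]\in H_{n-1}$ is nonzero. To be safe, I would instead rederive this from \Cref{totally geodesic hypersurface virtual homology} and \Cref{totally geodesic hypersurface separable}. Since $\pi_1(L)$ is the fundamental group of a totally geodesic submanifold of an arithmetic manifold, it is a virtual retract of $\pi_1(\mhat)$. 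Passing to a further finite cover, we obtain a retraction $\pi_1(\mhat')\to\pi_1(L)$, so $H_{n-1}(L)\to H_{n-1}(\mhat')$ is injective by \Cref{retraction implies injection}. Hence $[L]\neq 0$ in $\mhat'$, where $L$ is closed and orientable. Orientability is arranged by passing to an orientation cover if needed.

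Finally, a further cover must preserve the earlier properties. Embeddedness of $\hat\Sigma_i$ and connectedness of the intersection must survive, or be re-established by choosing components. The retraction cover can be taken to factor through the cover from Corollary 1.6, and we take the components of the preimages that contain a chosen lift of $L$. Nonseparation then holds in both $\hat\Sigma_1$ and $\hat\Sigma_2$ simultaneously, since $[L]\ne0$ in $H_{n-1}(\mhat')$ forces $[L]\neq0$ in each $H_{n-1}(\hat\Sigma_i)$.
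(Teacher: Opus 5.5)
Your main argument is correct, but it distributes the work differently from the paper.

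\textbf{The paper's route.} The paper gets an intersecting pair from \Cref{pairs intersecting} and uses \Cref{totally geodesic hypersurface virtual homology} to embed $\hat\Sigma_1^0,\hat\Sigma_2^0$ in some $\hat M^0$. It then applies the same corollary a second time \emph{intrinsically}, inside each arithmetic $n$-manifold $\hat\Sigma_i^0$ with $L^0$ as a totally geodesic hypersurface. This gives covers $\hat\Sigma_i$ in which the lift of $L^0$ is homologically nontrivial. It must then realize these two separately chosen covers inside one common cover $\hat M$ of $\hat M^0$, meeting along a single lift $L$; this separability step is asserted without detail.

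\textbf{Your route.} You place the nontriviality in the ambient manifold. One application of \cite[Corollary~1.6]{bhwHyperplaneSection} gives embedded $\hat\Sigma_i$ with connected $L$ and $[L]\neq 0$ in $H_{n-1}(\hat M)$. Since $L\hookrightarrow \hat M$ factors through each $\hat\Sigma_i$, you get $[L]\neq 0$ in both $H_{n-1}(\hat\Sigma_i)$ simultaneously, and the corollary itself supplies connectedness of the intersection. This avoids the compatibility issue entirely. The price is leaning on the stronger Corollary~1.6 and reading ``nontrivial in $H_*$'' as nontriviality of the fundamental class of $L$.

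One small point: ``separating $\Rightarrow [L]=0$ in $H_{n-1}(\hat\Sigma_i;\Z)$'' needs $\hat\Sigma_i$, not just $L$, to be orientable. A separating curve on a Klein bottle is nonzero in integral $H_1$. The paper's standing orientability convention, or a preliminary cover, provides this.

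\textbf{The fallback does not work as written.} Your ``to be safe'' argument via virtual retracts has a gap. Let $\Gamma'=\pi_1(\hat M')\supseteq \pi_1(L)$ be the retraction cover. The components $\hat\Sigma_i'$ through a chosen lift $L'$ may also meet along other lifts of $L$. With basepoints on a common lift $\tilde L$, suppose $a\in\pi_1(\hat\Sigma_1)\setminus\Gamma'$ and $b\in\pi_1(\hat\Sigma_2)$ satisfy $ab\in\Gamma'$. Then the translate $ab\,\tilde\Sigma_2=a\tilde\Sigma_2$ meets $\tilde\Sigma_1$ along $a\tilde L$, and this gives a second component of $\hat\Sigma_1'\cap\hat\Sigma_2'$. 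Nothing in the virtual-retract statement rules this out; for example, it happens if $\Gamma'$ is normal of index $2$ and contains neither $\pi_1(\hat\Sigma_1)$ nor $\pi_1(\hat\Sigma_2)$.

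In that case $[L']\neq 0$ only shows that $L'$ by itself is nonseparating, not the full intersection. Restoring connectedness needs an additional separability argument, which is essentially the content of Corollary~1.6. So the primary argument is the one to keep.
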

\begin{proof}
	Since $M$ is arithmetic of type I, $M$ contains a sequence of \cbility classes of closed totally geodesic hypersurfaces $\{\Sigma_i\}$. 
	By \Cref{pairs intersecting}, all but finitely many pairs of them intersect. For simplicity of notation, assume that $\Sigma_1$ nontrivially intersects $\Sigma_2$ (we can take $\Sigma_1=\Sigma_2)$. By \Cref{totally geodesic hypersurface virtual homology}, there is a finite cover $\hat{M}^0$ of $M$ such that $\Sigma_1$ lifts to an embedded finite cover $\hat{\Sigma}_{1}^0 \hookrightarrow \hat{M}^0$ that intersects a cover $\hat{\Sigma}_{2}^0 $ of $\Sigma_2$ in a codimension-$2$ subsurface $\hat{\Sigma}_{1}^0 \cap \hat{\Sigma}_{2}^0 \eqqcolon L^0$. By taking another finite cover of $\hat{M}^0$ if necessary, we lift both $\hat{\Sigma}_{1}^0, \hat{\Sigma}_{2}^0$ to a pair of transversally intersecting embedded totally geodesic hypersurfaces. For simplicity of notations, we assume $\hat{M}^0$ is a finite cover of $M$ with a pair of  embedded totally geodesic hypersurfaces $\hat{\Sigma}_{1}^0, \hat{\Sigma}_{2}^0$ which transversally intersect into a codimension-$2$ surface $L^0$. 
	
	Since totally geodesic submanifolds of an arithmetic hyperbolic manifold \oto are arithmetic \oton, $\hat{\Sigma}_{1}^0, \hat{\Sigma}_{2}^0$ are arithmetic, and $L^0$ is an arithmetic hypersurface of $\hat{\Sigma}_{1}^0, \hat{\Sigma}_{2}^0$. By \Cref{totally geodesic hypersurface virtual homology}, $\hat{\Sigma}_{1}^0, \hat{\Sigma}_{2}^0$ have finite covers $\hat{\Sigma}_1, \hat{\Sigma}_2 $ in $ \hat{M}$ that covers $\hat{M}^0$, such that $L^0$ lifts to $L$ nonseparating in both $\hat{\Sigma}_1, \hat{\Sigma}_2$ since $L$ represents homology. 
\end{proof}

\subsubsection{Cutting and pasting to obtain the $\pi_1$-injectivity}
By the discussion below, we may further arrange that the dihedral angle between $\hat{\Sigma}_1$ and $\hat{\Sigma}_2$ along $L$ is bounded below by a fixed constant $\theta>0$ (Lemma~\ref{lem_angle density}).

We then cut both hypersurfaces along $L$ and reglue them by exchanging the complementary pieces. In the universal cover $\mathbb{H}^{n+1}$, this provides a pleated hypersurface obtained by successively bending totally geodesic hyperplanes along lifts of $L$. Crucially, we choose the covers so that the distinct lifts of the bending locus are separated by distance at least a constant depending on $\theta$. The details are stated in Lemma~\ref{r vs theta for the bending locus} and Proposition~\ref{prop_reglue}.

This separation condition ensures that the convex hull of the resulting pleated hypersurface is simply connected and remains at uniformly bounded distance from the bent plane. By Proposition~\ref{prop_pi1 injectivity}, we obtain a $\pi_1$-injective hypersurface in $M$. 

Finally, we smooth the hypersurface in a small neighborhood of the bending loci. This smoothing does not alter the fundamental group and preserves $\pi_1$-injectivity (Corollary~\ref{cor_pi1 injectivity}). 
\begin{lem}\label{lem_angle density}
	Let $M$ be a closed arithmetic hyperbolic manifold of type I. Given any open interval $I\subset (0,\pi)$, there are infinitely many pairs of totally geodesic hypersurfaces $\Sigma_1, \Sigma_2$ which transversally intersect somewhere at an angle $\theta\in I$. Here we can take $\Sigma_1=\Sigma_2$. 
\end{lem}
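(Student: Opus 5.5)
The plan is to use the arithmetic structure directly: totally geodesic hypersurfaces of $M$ come from $K$-rational hyperplanes for the admissible form $f$, and dihedral angles between such hyperplanes can be prescribed densely in $(0,\pi)$. After conjugating, we may assume $\Gamma$ is commensurable with $O'(f,R_K)$, where $f$ is an admissible form over a totally real field $K$ on $V=K^{n+1}$. Every vector $v\in V$ with $f(v)>0$ has orthogonal complement $v^\perp$ (with respect to $f$) of signature $(n-1,1)$ over $\R$. The restriction $f|_{v^\perp}$ is again admissible: at the non-identity embeddings the form is definite, so the restriction stays definite. Hence the stabilizer of $v^\perp$ in $\Gamma$ is a lattice in $\tn{SO}(v^\perp)\cong \SONO$, and $v^\perp\cap\hs$ projects to a closed totally geodesic hypersurface of $M$. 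Given two such vectors $v,w$ with $f(v),f(w)>0$, the corresponding hyperplanes in $\hs$ meet transversally exactly when the plane $\tn{span}(v,w)$ is positive definite. In that case the dihedral angle $\theta$ satisfies
\[
\cos\theta=\frac{\langle v,w\rangle_f}{\sqrt{f(v)f(w)}}.
\]

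The key step is to show that the set of values $\langle v,w\rangle_f/\sqrt{f(v)f(w)}$, for $v,w\in K^{n+1}$ with $f(v),f(w)>0$, is dense in $(-1,1)$. Fix one positive $K$-vector $v$. The set $V\otimes_K\R$ is the real points of $V$ under the identity embedding, and $K^{n+1}$ is dense in $\R^{n+1}$. So we can choose $w\in K^{n+1}$ arbitrarily close to any real vector $w_0$ with $f(w_0)>0$ making a prescribed angle $\theta_0\in I$ with $v$. Since the formula above is continuous in $w$ on the open set where $f(w)>0$ and $\tn{span}(v,w)$ is positive definite, rational $w$ realise angles in $I$. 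This gives infinitely many such $w$, projectively distinct, whose hyperplanes meet $v^\perp\cap\hs$ at angles in $I$ (using $\pi-\theta$ as well if needed). Projecting to $M$ gives pairs of closed totally geodesic hypersurfaces $\Sigma_1=\pi(v^\perp\cap\hs)$ and $\Sigma_2=\pi(w^\perp\cap\hs)$ that intersect transversally along the image of $v^\perp\cap w^\perp\cap\hs$ with angle in $I$.

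For infinitude, note that only finitely many totally geodesic hypersurfaces of $M$ have bounded area, while the $\Gamma$-orbits of rational hyperplanes are infinite in number. We therefore need to check that infinitely many of the chosen $w$ give pairwise non-$\Gamma$-equivalent hypersurfaces. We can arrange this by noting that a fixed closed $\Sigma_2$ meets $\Sigma_1$ at only finitely many angles up to finitely many intersection components. Alternatively, we can use the equidistribution of $\pi_1$ of totally geodesic hypersurfaces from \Cref{ag implies asymptotically dense}: since $\Sigma_1$ is fixed and the $\Sigma_2$ become dense in $\gm M$, they cross $\Sigma_1$ at all angles. That second argument also yields the case $\Sigma_1=\Sigma_2$. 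The density of rational points is easy. I expect the main obstacle to be verifying that the restricted form stays admissible, so that the hypersurfaces are closed, and ensuring infinitely many distinct commensurability classes. For the latter I would first try the Ratner--Shah density argument, taking $\Sigma_2$ from an infinite sequence converging to all of $\gm M$.
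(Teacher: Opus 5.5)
Your argument for distinct $\Sigma_1\neq\Sigma_2$ is correct, and it takes a different route from the paper. The paper never uses the quadratic form. It fixes $(x,\Pi_1),(x,\Pi_2)$ in $\mathcal{G}_nM$ meeting at an angle $\alpha\in I$. It then uses that the Grassmann bundles of the infinitely many closed totally geodesic hypersurfaces $\Sigma_j$ become dense in $\mathcal{G}_nM$ (Ratner--Shah, Mozes--Shah). Sheets of $\Sigma_j,\Sigma_k$ near $x$ that are $C^1$-close to $\Pi_1,\Pi_2$ therefore cross transversally at an angle near $\alpha$.

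You replace the homogeneous dynamics by three ingredients: the density of $K$ in $\mathbb{R}$, the admissibility of $f|_{v^\perp}$ (so every $K$-rational $v$ with $f(v)>0$ gives a closed hypersurface), and the explicit angle formula. This is more elementary. The paper's version gives more: all sufficiently large $\Sigma_j$ realize the angle, which is what is used later for an equidistributing sequence with angles tending to $\pi$. Two small corrections:
\begin{itemize}
\item For $M^{n+1}$ you want $V=K^{n+2}$, so that $v^\perp$ has signature $(n,1)$.
\item Projectively distinct $w$ do not yet give infinitely many pairs. The translates of $w^\perp$ by $\mathrm{Stab}_\Gamma(v^\perp)$ are projectively distinct but project to the same pair at the same angle. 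Your observation that a fixed pair has finitely many intersection components, hence finitely many angles, fixes this once you choose the $w$ to realize pairwise distinct angles in $I$. The density allows that choice.
\end{itemize}

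The gap is the clause $\Sigma_1=\Sigma_2$. The rational-vector argument cannot produce it. The hyperplanes $w^\perp\cap\hs$ and $v^\perp\cap\hs$ project to the same hypersurface only if $w\in\Gamma v$ up to scaling, and density of $K$-points says nothing about a single $\Gamma$-orbit. In fact the $\Gamma$-translates of a hyperplane with closed image form a locally finite family. So each fixed closed $\Sigma$ has only finitely many self-intersection angles, and you must vary $\Sigma$ while controlling its self-intersection angles.

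Your ``second argument'' does not do this as stated either, since it keeps $\Sigma_1$ fixed and moves $\Sigma_2$. The missing step is to apply density to one hypersurface at a time. Since $\mathcal{G}_n\Sigma_j\to\mathcal{G}_nM$ in the Hausdorff sense, for $j$ large the single set $\mathcal{G}_n\Sigma_j$ meets both small neighbourhoods $\mathcal{U}_1\ni(x,\Pi_1)$ and $\mathcal{U}_2\ni(x,\Pi_2)$. The two corresponding local sheets of $\Sigma_j$ near $x$ then cross transversally at an angle close to $\alpha$. This is the paper's argument, phrased there via $\mu_j(\mathcal{U}_1)\mu_j(\mathcal{U}_2)\to\mu_{\mathcal{L}}(\mathcal{U}_1)\mu_{\mathcal{L}}(\mathcal{U}_2)>0$. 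Once it is in place it also covers $\Sigma_1\neq\Sigma_2$, so your arithmetic part becomes an optional, dynamics-free alternative for that case only.
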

\begin{proof}
	Choose a point $x\in M$ and a pair of hyperplanes $\Pi_1,\Pi_2$ in $T_xM$ that intersect transversally and form an angle $\alpha\in I$. Since a sequence of totally geodesic hypersurfaces $\Sigma_j$ equidistribute, the Grassmann bundles $\gnmo \Sigma_j$ converge in the Hausdorff metric to $\gnmo M$. We will approximate the angle by two intersecting hypersurfaces in this sequence. 
	
	There exist small neighborhoods $\mathcal{U}_i$ of $(x,\Pi_i)$ in $\gnmo M$ ($i=1,2$) and indices $j,k$, such that $x_j\in \Sigma_j$ and  $x_k\in \Sigma_k$ with $(x_j,T_{x_j}\Sigma_j)\in \mathcal{U}_1$ and $(x_k,T_{x_j}\Sigma_k)\in \mathcal{U}_2$. Moreover, we assume that $x_j,x_k\in B_\epsilon(x)$. When $\epsilon$ is sufficiently small, $\Sigma_j\cap B_\epsilon(x)$ and $\Sigma_k\cap B_\epsilon(x)$ can be viewed as smooth graphs over $\Pi_1$ and $\Pi_2$, respectively, with small $C^1$ norms. Transversality and the angle between two hyperplanes are open conditions in the $C^1$ topology. As a consequence, given a fixed $\delta>0$ arbitrarily small, we can find $\Sigma_j$ and $\Sigma_k$ which intersect in a lower-dimensional subspace near $x$, and the angle satisfies $\theta\in (\alpha-\delta, \alpha+\delta)\subset I$. 
	
	Since we can find infinitely many indices $j$ approximating $\Pi_1$ and infinitely many $k$ approximating $\Pi_2$, we can form infinitely many distinct pairs $(\Sigma_j,\Sigma_k)$ with angles in the given interval $I$. 
	
	Let $\mu_j$ be the probability measure induced by $\Sigma_j$ on $\gnmo M$. By \cite{msErgodicInvariantMeasure}, $\mu_j$ converge to the Liouville measure. Thus $$\mu_j(\mathcal{U}_1) \mu_j(\mathcal{U}_2) \rightarrow \mu_{\mathcal{L}}(\mathcal{U}_1)\mu_{\mathcal{L}}(\mathcal{U}_2)>0.$$ Thus all sufficiently large totally geodesic hypersurface must have self-intersections with angle closed to $\angle \Pi_1, \Pi_2$. 
\end{proof}
Later we lift $\Sigma_1,\Sigma_2$ to some finite-covers so they are embedded and have controlled intersections. If $\Sigma_1$ and $\Sigma_2$ are chosen from distinct commensurability classes, then by \cite{gpsNonarithmeticHyperbolic}, the hypersurface obtained from cut-and-paste will have non-arithmetic hyperbolic path metric. If we take $\Sigma_1=\Sigma_2$, we obtain an arithmetic $n$-manifold. 

\begin{lem}\label{r vs theta for the bending locus}
	Let $Q_1, Q_2\subset\mathbb{H}^{n+1}$ be totally geodesic half-hyperplanes intersecting transversally along an $(n-1)$-dimensional totally geodesic subspace $L=Q_1\cap Q_2$, with dihedral angle $\theta\in (0,\pi)$ along $L$. Fix a  sufficiently small $\epsilon>0$ and set $B=M_\epsilon(Q_1\cup Q_2)$. For $p\in\partial M_\epsilon(Q_i)$, let $P_p$ denote the unique totally geodesic hyperplane with $P_p\cap M_\epsilon(Q_i) = \{p\}$. 
	Then with \begin{equation}\label{equ_min_radius}
		r(\theta) = \arccosh \left(\cosh(\epsilon)\sqrt{1+\cot^2\left(\frac{\theta}{2}\right)\coth^2\epsilon}\right),
	\end{equation}
	the following holds: 
	\begin{center}
		if $\dist(p, L)>r(\theta)$, then $P_p\cap B = \{p\}$.
	\end{center}
\end{lem}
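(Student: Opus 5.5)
The plan is to reduce everything to a two-dimensional computation in a totally geodesic plane orthogonal to $L$, and then settle that computation with a Lambert quadrilateral.

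\textbf{Step 1: the part of $B$ coming from $Q_i$.} Since $\dist(p,L)>r(\theta)>\epsilon$, the point $p$ lies on the "flat'' part of $\partial M_\epsilon(Q_i)$, not on the half-tube around $L$. So $p$ is at distance exactly $\epsilon$ from a point $q$ in the interior of $Q_i$, along the normal geodesic to $Q_i$ at $q$. Therefore $P_p$ is the hyperplane through $p$ orthogonal to the geodesic $[q,p]$. The set $M_\epsilon(Q_i)$ is convex, and $P_p$ is a supporting hyperplane of it at $p$. Hence $P_p\cap M_\epsilon(Q_i)=\{p\}$ holds automatically. It remains to show $P_p\cap M_\epsilon(Q_j)=\emptyset$ for $j\neq i$, that is, $\dist(P_p,Q_j)>\epsilon$.

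\textbf{Step 2: reduction to $\mathbb{H}^2$.} Let $\Pi$ be the totally geodesic $2$-plane through $q$ orthogonal to $L$, and let $O=\Pi\cap L$. The normal vector of $P_p$ lies in $\Pi$, and so does the normal vector of the hyperplane $\hat Q_j\supset Q_j$. Hence both $P_p$ and $\hat Q_j$ are orthogonal to $\Pi$, and $\dist(P_p,\hat Q_j)$ is realized inside $\Pi$. Since $Q_j\subset\hat Q_j$, we have $\dist(P_p,Q_j)\ge\dist(P_p,\hat Q_j)$, so it suffices to bound the latter. Isometries fixing $L$ setwise act transitively along $L$, so the configuration depends only on $s:=d(q,O)=d(q,L)$. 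The hyperbolic Pythagorean theorem gives $\cosh\dist(p,L)=\cosh s\cosh\epsilon$.

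In $\Pi\cong\mathbb{H}^2$ we then have:
\begin{itemize}
\item rays $\ell_i,\ell_j$ from $O$ at angle $\theta$;
\item the point $q\in\ell_i$ with $d(O,q)=s$;
\item the line $P$ perpendicular to the normal at $q$, at distance $\epsilon$ from $q$.
\end{itemize}
The worst case is when $p$ lies on the side of $Q_i$ facing $Q_j$; on the other side $P$ moves away from $\ell_j$. Only in the worst case is there anything to check.

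\textbf{Step 3: the critical configuration.} Set $f(s)=\dist(P,\ell_j)$. This function is increasing in $s$: the line $P$ moves monotonically outward along $\ell_i$, and the lines at distance $\epsilon$ from $\ell_i$ form a nested family. So it suffices to find the critical $s_0$ with $f(s_0)=\epsilon$.

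At $s_0$, the line $P$ is tangent to both $\epsilon$-equidistant curves, of $\ell_i$ and of $\ell_j$. By the reflection symmetry across the angle bisector of $\ell_i,\ell_j$, $P$ is then perpendicular to the bisector.

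Consider the quadrilateral with vertices $O$, $q$, $p$, and the point $m=P\cap(\text{bisector})$. Its angles at $q$, $p$, $m$ are right angles and its angle at $O$ is $\theta/2$, so it is a Lambert quadrilateral. Its sides are $d(O,q)=s_0$ and $d(q,p)=\epsilon$.

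I will apply the Lambert-quadrilateral trigonometric identities, adjacent to the acute angle $\theta/2$. The expected relation is
$$\sinh s_0=\cot(\theta/2)\coth\epsilon.$$
Substituting into $\cosh\dist(p,L)=\cosh s_0\cosh\epsilon$ gives exactly $\cosh r(\theta)=\cosh\epsilon\sqrt{1+\cot^2(\theta/2)\coth^2\epsilon}$, which is \eqref{equ_min_radius}. Monotonicity of $f$ then gives: $\dist(p,L)>r(\theta)$ implies $s>s_0$, hence $\dist(P_p,\hat Q_j)>\epsilon$, hence $P_p\cap B=\{p\}$.

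\textbf{Main obstacle.} The delicate point is Step 3. First, I must pick the correct Lambert identity: it relates $\tanh$ and $\sinh$ of the sides adjacent to the acute angle. Second, I must rigorously justify that the critical tangency is symmetric, so that $P$ is perpendicular to the bisector. I would argue this via uniqueness of a common tangent line to the two equidistant curves on the relevant side. If that argument proves awkward, the fallback is to compute $\dist(P,\hat Q_j)$ directly in the hyperboloid model. There, $\sinh$ of the distance between two ultraparallel lines equals the absolute value of the Minkowski product of their unit normals. Setting that quantity equal to $\sinh\epsilon$ and solving for $s$ gives the same threshold.
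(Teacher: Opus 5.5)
There is a genuine gap, and it sits on the side of $Q_i$ facing away from $Q_j$. Your treatment of the side facing $Q_j$ matches the paper's argument and is correct: the critical line perpendicular to the bisector, the Lambert quadrilateral $Oqpm$ with $\cot(\theta/2)=\sinh s_0\tanh\epsilon$, and $\cosh\dist(p,L)=\cosh s\cosh\epsilon$. Your reduction of distances to the normal plane $\Pi$ is also fine. The problem starts in Step~2, where you replace $Q_j$ by the full hyperplane $\hat Q_j$ and reduce to proving $\dist(P_p,\hat Q_j)>\epsilon$. But $\hat Q_j\setminus Q_j$ lies exactly on the far side of $\hat Q_i$, at dihedral angle $\pi-\theta$ from $Q_i$. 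With unit normals $T$ of $P_p$ and $N_j$ of $\hat Q_j$ in the hyperboloid model, one finds $|\langle T,N_j\rangle|=|\cosh\epsilon\cos\theta\mp\sinh s\,\sinh\epsilon\,\sin\theta|$, with $-$ on the facing side and $+$ on the far side. So on the far side, $\dist(P_p,\hat Q_j)>\epsilon$ holds if and only if $\sinh s>\tan(\theta/2)\coth\epsilon$. For $\theta>\pi/2$, and in particular for $\theta\to\pi$ where the lemma is used, this threshold exceeds your $s_0$. For $s_0<s<\operatorname{arsinh}(\tan(\theta/2)\coth\epsilon)$ the inequality your argument needs is simply false. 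Relative to $\hat Q_j$, the far side is therefore the worst case, not the easy one. Your remark that there ``$P$ moves away from $\ell_j$'' is true only for the half-hyperplane $Q_j$, and you give no proof of it.

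What is missing is a separation argument; the paper handles this side by working with the half-plane $\Sigma_1^+$ bounded by $\overline{\alpha}_1$. On the far side, $P_p$ lies in the open half-space of $\mathbb{H}^{n+1}\setminus\hat Q_i$ not containing $Q_j$. Also $P_p\cap M_\epsilon(\hat Q_i)=\{p\}$, because $[q,p]$ is the common perpendicular of $P_p$ and $\hat Q_i$. For $y\in P_p$ and $z\in Q_j$, the segment $[y,z]$ meets $\hat Q_i$ at some point $w$, so $d(y,z)\ge d(y,w)\ge d(y,\hat Q_i)\ge\epsilon$. Equality throughout would force $y=p$, $w=q$ and $z=w$, hence $q\in\hat Q_i\cap Q_j=L$, contradicting $d(q,L)=s>0$. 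Hence $P_p\cap M_\epsilon(Q_j)=\emptyset$ for every $s>0$ on that side.

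Two smaller points. First, your justification of the monotonicity of $f$ via a ``nested family'' is not right: tangent lines to the $\epsilon$-equidistant curve at nearby points actually intersect. However, the facing-side expression $\cosh\epsilon\cos\theta-\sinh s\,\sinh\epsilon\,\sin\theta$ is strictly decreasing in $s$ and reaches $-\cosh\epsilon$ exactly at $\sinh s=\cot(\theta/2)\coth\epsilon$. This gives both the monotonicity and the threshold, with no symmetry or uniqueness argument needed. Second, in your fallback, for ultraparallel lines it is $\cosh$ of the distance, not $\sinh$, that equals $|\langle N_1,N_2\rangle|$. You must set it equal to $\cosh\epsilon$; using $\sinh\epsilon$ gives the wrong threshold.
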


\begin{proof}
	Let $\Sigma$ be the totally geodesic 2-plane that is orthogonal to $L$ and contains $p$.  Then it suffices to consider the following objects in $\Sigma\cong\mathbb{H}^2$:
	\begin{itemize}
		\item $\alpha_i:=Q_i\cap\Sigma$ are geodesic segments intersecting with the same angle $\theta$ at $o:=\alpha_1\cap\alpha_2 = L\cap\Sigma$,
		\item $B\cap\Sigma = M_\epsilon(\alpha_1\cup \alpha_2)$,
		\item $P_p\cap \Sigma$ is the supporting geodesic to $B\cap \Sigma$ at $p$ on the exterior side.
	\end{itemize}
	Note that $P_p\cap B=\{p\}$  if and only if $(P_p\cap \Sigma)\cap (B\cap \Sigma)=\{p\}$, so it remains to prove the latter. 
	
	We assume $p\in \partial_\epsilon(\alpha_1)$ and consider two components of $\partial M_\epsilon (\alpha_1)\setminus B_\epsilon(o)$ separately. Let $\partial^+M_\epsilon(\alpha_1)$ be the component lying in the convex component of $\Sigma\setminus(\alpha_1\cup \alpha_2)$, and denote the other by $\partial^-M_\epsilon(\alpha_1)$.
	
	Suppose $p\in \partial^-M_\epsilon(\alpha_1)$. The half-line $\alpha_1$ extends to a geodesic $\overline{\alpha}_1$ in $\Sigma$, and let $\Sigma^+_1$ be the half-plane bounded by $\overline{\alpha}_1$ that contains $p$. When $r>\epsilon$, $M_\epsilon(\overline{\alpha}_1)$ can only intersect $P_p\cap \Sigma$ at $p$, so we must have $(P_p\cap\Sigma)\subset \Sigma^+_1$. Since $M_\epsilon(\alpha_2)\cap \Sigma^+_1\subset B_\epsilon(o)$ and $\dist(p,o)>r>\epsilon$, $M_\epsilon(\alpha_2)$ never touches $P_p\cap\Sigma$, which leads to the result. 
	
	Suppose $p\in \partial^+M_\epsilon(\alpha_1)$. Let $\beta\subset \Sigma_1^+$ be a geodesic ray from $o$ bisecting the angle $\theta$, and let $\{c(t)\}$ be a sequence of geodesics intersecting $\beta$ orthogonally with distance $t$ from $o$. There exists $t_0>0$ such that $c(t_0)$ is tangent to $B\cap\Sigma$ at two distinct points, lying respectively on $\partial_\epsilon(\alpha_1)$ and $\partial_\epsilon(\alpha_2)$.
	
	Now we calculate $r(\theta)$ in \eqref{equ_min_radius}. Let $c=c(t_0)\cap \beta$, and let $pq$ be 
    the common orthogonal between $c(t)$ and $\alpha_1$, where $p\in c(t), q\in \alpha_1$. 
     We then have a Lambert quadrilateral $ocpq$, where the angles at the vertices $c,p,q$ are right angles, and $\angle coq=\frac\theta2$ is an acute angle. We will express the diagonal $r=|op|$ in terms of $\theta$. Since $|pq|=\epsilon$,  by [\citenum{buser2010geometry}, 2.3.1 (vi)] we have the following
    \begin{equation}\label{equ_Lambert quadrilateral}
		\cot\frac\theta2 = \sinh(|oq|)\tanh\epsilon.
	\end{equation} 
	Moreover, [\citenum{buser2010geometry}, 2.2.2 (i)] applies to the hyperbolic right triangle $\triangle opq$ and implies that \begin{equation}\label{equ_Pythagoras_right_triangle}
		\cosh r = \cosh\epsilon\cosh(|oq|).
	\end{equation}
	Then \eqref{equ_min_radius} follows from \eqref{equ_Lambert quadrilateral} and \eqref{equ_Pythagoras_right_triangle}. Therefore, $r(\theta)$ decreases as $\theta$ increases for $\theta\in (0,\pi)$.
\end{proof}
The following is a consequence of [\citenum{buser2010geometry}, 2.3.1 (iii)]. 
\begin{cor}[Estimate of width]\label{width of convex hull}
    Let $Q=\hat{\Sigma}_1\cup \hat{\Sigma}_2$ and $w$ be the width of the convex hull $\ch(Q)$. Then $w$ satisfies $$
        \cosh w = \frac{\cosh\epsilon}{\sin\frac{\theta}{2}}.
    $$
    Hence $w$ tends to $0$ as $\theta\to\pi$ and $\epsilon\to 0$.
\end{cor}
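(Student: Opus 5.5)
The plan is to reduce the width computation to the same two-dimensional picture used in the proof of Lemma~\ref{r vs theta for the bending locus}. I will interpret $\ch(Q)$ as the convex hull of the $\epsilon$-neighborhood $B=M_\epsilon(Q_1\cup Q_2)$ of the bent hyperplane near a lift of $L$. I will take $w$ to be the distance from the bending locus to the supporting hyperplane on the convex side. This is the quantity that stays finite and depends on $\epsilon$, as the formula indicates.

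\textbf{Reduction to a plane.} The configuration is invariant under the isometries of $\hs$ preserving $Q_1$, $Q_2$ and $L$. These act transitively on $L$ and preserve the totally geodesic $2$-planes $\Sigma$ orthogonal to $L$. So it suffices to work in one such $\Sigma\cong\mathbb{H}^2$, where $Q_i\cap\Sigma=\alpha_i$ are geodesic rays from $o=L\cap\Sigma$ meeting at angle $\theta$. On the concave side, $B\cap\Sigma$ is already bounded by the convex curves $\partial M_\epsilon(\alpha_i)$. On the convex side, the convex hull adds exactly the region cut off by the geodesic $c(t_0)$. This geodesic is orthogonal to the bisector $\beta$ and tangent to both $\partial M_\epsilon(\alpha_1)$ and $\partial M_\epsilon(\alpha_2)$, as constructed in the proof of Lemma~\ref{r vs theta for the bending locus}. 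Hence $w=t_0=|oc|$, where $c=c(t_0)\cap\beta$.

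\textbf{The computation.} As in that proof, $o,c,p,q$ form a Lambert quadrilateral with right angles at $c,p,q$ and acute angle $\angle coq=\theta/2$ at $o$. The side $pq$ has length $\epsilon$ and is opposite $oc$. The Lambert relation [\citenum{buser2010geometry}, 2.3.1 (iii)] relates a side at the acute vertex to the opposite side via $\cosh|oc|\,\sin\tfrac\theta2=\cosh|pq|$. This gives $\cosh w=\cosh\epsilon/\sin\frac\theta2$.

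As a cross-check, and as an alternative if the indexing of Buser's formula is ambiguous, I would combine the two relations already used: $\cot\frac\theta2=\sinh|oq|\tanh\epsilon$ and $\cosh r=\cosh\epsilon\cosh|oq|$. To these I would add the right triangle $ocp$ with hypotenuse $r$ and angle $\angle cop=\frac\theta2-\angle poq$. Eliminating $|oq|$ and $r$ should recover the same formula. A second sanity check is the limit $\epsilon\to0$: the relation becomes $\sin\frac\theta2=1/\cosh w$, which is the angle of parallelism formula, as it should be when $c(t_0)$ becomes asymptotic to $\alpha_1$.

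\textbf{Limits and main obstacle.} The limit statement is then immediate. As $\theta\to\pi$ we have $\sin\frac\theta2\to1$, and as $\epsilon\to0$ we have $\cosh\epsilon\to1$, so $\cosh w\to1$ and $w\to0$. I expect the only real subtlety to be justifying that the convex hull near $L$ is exactly the region bounded by $c(t_0)$ and the two $\epsilon$-boundaries. Lemma~\ref{r vs theta for the bending locus} handles this: beyond distance $r(\theta)$ from $L$, the tangent hyperplanes $P_p$ meet $B$ only at $p$, so the hull adds nothing there. The other point needing care is matching the correct sides in the Lambert formula.
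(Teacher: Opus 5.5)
Your proposal is correct and follows essentially the paper's route. The paper's entire justification is Buser's Lambert-quadrilateral relation 2.3.1(iii) applied to the quadrilateral $ocpq$ from the proof of Lemma~\ref{r vs theta for the bending locus}, giving $\cosh|oc|\,\sin\frac{\theta}{2}=\cosh|pq|=\cosh\epsilon$ with $w=|oc|$, exactly as you do. Your explicit reading of $w$ as the distance from the bending locus to the supporting hyperplane through $c(t_0)$ on the convex side is the one that makes the stated formula hold, and your $\epsilon\to 0$ angle-of-parallelism check is consistent with it.
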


\begin{prop}\label{prop_reglue}
	Successively bending hyperplanes with width $\geq r(\theta)$ and angle $\geq \theta$ makes a pleated plane $Q$ whose convex hull $\ch(Q)$ is simply connected and has bounded Hausdorff distance to $Q$. 
\end{prop}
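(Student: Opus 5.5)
The plan is to build the convex hull of the pleated plane $Q$ inductively from local models, and to control it with Lemma~\ref{r vs theta for the bending locus} and Corollary~\ref{width of convex hull}.

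Write $Q$ as a union of totally geodesic pieces $F_\alpha$ (regions of hyperplanes) glued along bending loci $L_\beta$, which are $(n-1)$-dimensional geodesic subspaces. Each pair of adjacent pieces meets at dihedral angle $\geq\theta$ on the convex side, and distinct bending loci are at distance $\geq r(\theta)$. The dual graph (vertices $F_\alpha$, edges $L_\beta$) is a tree, because each $L_\beta$ separates the hyperplane containing it. Around each $L_\beta$ the local model is a pair of half-hyperplanes $Q_1,Q_2$ meeting at angle $\theta_\beta\geq\theta$. By Corollary~\ref{width of convex hull}, the convex hull of this model lies in the $w$-neighborhood $M_w(Q_1\cup Q_2)$, with $\cosh w=\cosh\epsilon/\sin(\theta/2)$. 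In particular the hull is a wedge-shaped region thickening $L_\beta$ of bounded width.

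The key step is to construct a closed set $B:=M_{\epsilon'}(Q)$, with $\epsilon'$ comparable to $w$, and to show that $B$ is convex. Since $\ch(Q)\subset B$ and $Q\subset\ch(Q)$, this gives Hausdorff distance $\leq\epsilon'$ between $\ch(Q)$ and $Q$. For convexity I would use local-to-global convexity in $\mathbb{H}^{n+1}$: a connected closed set which is locally supported at every boundary point by a hyperplane $P_p$ with $P_p\cap B=\{p\}$ is convex.

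Boundary points of $B$ fall into two cases. If $p$ is within distance $r(\theta)$ of some $L_\beta$, then the spacing hypothesis (width $\geq r(\theta)$) means only the two pieces adjacent to $L_\beta$ are nearby. Locally $B$ then agrees with the model neighborhood around $L_\beta$, which is convex near $p$ because it contains the model convex hull. If $p$ is farther than $r(\theta)$ from every bending locus, then Lemma~\ref{r vs theta for the bending locus}, applied with the two neighboring half-planes, shows that the supporting hyperplane $P_p$ meets the neighborhood only at $p$. The tree structure then lets me propagate this: I would show $P_p$ separates $p$'s piece from all further pieces, so $P_p$ misses the rest of $B$ as well. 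This propagation is the main obstacle. One must check that successive bends all curve toward the same side, or at least never come back to hit $P_p$. I expect to handle this by induction along the tree: each further piece lies in the half-space beyond a hyperplane through its bending locus, and these half-spaces are nested because the loci are disjoint and $r(\theta)$-separated.

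Simple connectivity then follows for free. $B$ is convex, hence contractible. Moreover $\ch(Q)$ is itself convex, being the intersection of half-spaces, and therefore contractible, in particular simply connected. So the real content of the statement is the Hausdorff bound, and that bound follows from the inclusion $\ch(Q)\subset B\subset M_{\epsilon'}(Q)$.
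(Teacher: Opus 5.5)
Your key step fails. The set $B=M_{\epsilon'}(Q)$ is not convex for any $\epsilon'>0$, because every bend has angle $\theta_\beta<\pi$.

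Near a bending locus $L_\beta$, $Q$ consists of two half-hyperplanes $Q_1,Q_2$. In the normal cross-section they are rays $\alpha_1,\alpha_2$ from $o$ at angle $\theta_\beta$. On the concave side, the equidistant curves $\{d(\cdot,\alpha_1)=\epsilon'\}$ and $\{d(\cdot,\alpha_2)=\epsilon'\}$ cross transversally on the bisector, at the point $z$ with $\sinh d(o,z)\,\sin(\theta_\beta/2)=\sinh\epsilon'$. Near $z$, $B$ is the union of the two convex sets $M_{\epsilon'}(Q_1)$ and $M_{\epsilon'}(Q_2)$, so its complement is locally a curvilinear wedge of opening angle less than $\pi$. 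Hence $B$ has a reentrant crease there. A short geodesic joining points of the two equidistant hypersurfaces on either side of $z$ leaves $B$, and no hyperplane supports $B$ at $z$.

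So the local-to-global principle cannot be invoked. Your sentence that the model neighborhood is ``convex near $p$ because it contains the model convex hull'' is a non sequitur: containing a convex set says nothing about being convex. As a result, the inclusion $\ch(Q)\subset M_{\epsilon'}(Q)$, which is the whole content of the Hausdorff bound, is not proved by your argument.

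The missing idea, which the paper uses following Masters, is to fill in each crease before testing convexity. At every bending locus, adjoin to $M_\epsilon(Q)$ the saw-wing region on the concave side. In each normal cross-section this is the region between the crease and the geodesic $c(t_0)$ that is orthogonal to the bisector and tangent to both $\partial M_\epsilon(\alpha_i)$, exactly as in the proof of Lemma~\ref{r vs theta for the bending locus}. The tangency points lie at distance exactly $r(\theta)$ from $L_\beta$; that is how $r(\theta)$ is computed. Now check supporting hyperplanes at each boundary point of the enlarged set $B_s$:
\begin{itemize}
\item Points farther than $r(\theta)$ from every locus are supported by Lemma~\ref{r vs theta for the bending locus}.
\item Points on the caps are supported by Masters' argument.
\item On the convex side, $M_\epsilon(Q)$ is already locally convex near $L_\beta$.
\end{itemize}
Every point of a saw-wing is within distance $w$ of $Q$, where $\cosh w=\cosh\epsilon/\sin(\theta/2)$ is the distance from $L_\beta$ to the cap along the bisector (Corollary~\ref{width of convex hull}). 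This gives $\ch(Q)\subset B_s\subset M_w(Q)$.

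Some smaller remarks:
\begin{itemize}
\item The paper's proof takes the loci at mutual distance $\geq 2r(\theta)+\eta$. Your claim that only the two pieces adjacent to $L_\beta$ matter near $p$ needs this, not just $r(\theta)$.
\item Once you use the local-to-global principle, the propagation along the tree that you flag as the main obstacle is unnecessary. This is just as well, since consecutive bends of the cut-and-paste surface need not be concave toward the same side.
\item Your observation that simple connectivity is automatic for a convex set is correct.
\end{itemize}
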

\begin{proof}
	Since we inter-breed along an embedded codimension-$2$ \totallygeodesic submanifold $F$ of two hypersurfaces, in the universal cover $\hs$ we have a family of planes $\{P_i\}$ such that there are no triple-intersections between $3$ planes. 
	Let $Q$ be a pleated plane formed from half-planes of $\{P_i\}$ whose bending loci consist of \totallygeodesic codimension-$2$ subspaces of $\hs$ such that the subspaces have pairwise distance $\geq 2r(\theta)+\eta$ for $\eta>0$. 
	
	Consider the $\epsilon$-neighborhood $M_{\epsilon}(Q)$ of $Q$. We show that the convex hull $\ch(Q)$ can be constructed by adding a \emph{disjoint} union of saw-wing regions to $M_{\epsilon}(Q)$. By \Cref{r vs theta for the bending locus}, if $p\in M_{\epsilon}(Q)$ has distance $\geq r(\theta)$ from all the bending loci, there exists a unique tangent totally geodesic plane $P_p \cap M_{\epsilon}(Q)=p$. If $p$ has distance $\leq r(\theta)$ to some of the bending locus, Masters in proving [\citenum{mjCountingImmersedSurfaces}, Lemma 4.2] has shown that adding a saw-wing part to the union of the $\epsilon$-neighborhood of a union of two intersecting half-planes results in a convex set (see also [\citenum{mjCountingImmersedSurfaces}, Figure 3]), by proving the key property that each point $p$ on the cap of a saw-wing region has a tangent hyperplane whose intersection with the saw-wing is $\{p\}$. 
	
	The saw-wing regions added near each locus depend only on $\theta$ and $r$ and have bounded distance to the bent planes. By adding one saw-wing region to $M_{\epsilon}(Q)$ corresponding to each bending locus, we make a set $B_s$ where each point $\{p\}$ on its boundary has a tangent hyperplane $P$ such that $P\cap B=\{p\}.$ Thus $B_s$ is convex. 
\end{proof}

Next, we complete the proof of $\pi_1$-injectivity following the strategy of Lemma 4.3 in \cite{mjCountingImmersedSurfaces}.
\begin{prop}\label{prop_pi1 injectivity}
    Let $X$ be a metrically complete hyperbolic $(n+1)$-manifold with convex boundary. Then any locally isometric immersion $f\colon  X\to M$ induces an injective homomorphism $f_*\colon \pi_1(X)\to \pi_1(M) $. 

    In particular, $$X:=\mathrm{Stab}(\ch(Q))\backslash\ch(Q)=\mathrm{Stab}(Q)\backslash\ch(Q)$$
    is a metrically complete hyperbolic $(n+1)$-manifold with convex boundary.
    Thus $\mathrm{Stab}(Q)\to \pi_1(M)$ is injective and $S_0:=\mathrm{Stab}(Q)\backslash Q,$
    the quotient of the pleated hypersurface $Q$ by its stabilizer, is a $\pi_1$-injective pleated hypersurface in $M$. 
\end{prop}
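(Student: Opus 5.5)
The argument is the standard developing-map argument for hyperbolic manifolds with locally convex boundary (as in Masters, Lemma 4.3), carried out in three steps.

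\textbf{Step 1: develop $X$ into $\hs$.} Let $\tilde X$ be the universal cover of $X$ with the pulled-back hyperbolic metric, which is still metrically complete with locally convex boundary. The immersion $f$ is a local isometry, so its lift to universal covers gives a local isometry $\tilde f\colon \tilde X\to \hs$ that is equivariant with respect to $f_*$. I will show that $\tilde f$ is injective. Then an element of $\ker f_*$ acts on $\tilde X$ as a deck transformation $g$ with $\tilde f\circ g=\tilde f$. Injectivity forces $g=\mathrm{id}$, so $f_*$ is injective.

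\textbf{Step 2: injectivity of $\tilde f$.} This is the main step, and I will prove it by a convexity argument. Because $\tilde X$ is complete and has locally convex boundary, any two points $x,y\in\tilde X$ are joined by a shortest path in $\tilde X$ (Hopf--Rinow for length spaces, using completeness and local compactness). Local convexity of the boundary means a shortest path cannot bend along $\partial\tilde X$. Hence it is a local geodesic, i.e.\ locally a hyperbolic geodesic segment. Its image under the local isometry $\tilde f$ is therefore a local geodesic of $\hs$, which is a genuine geodesic because $\hs$ is uniquely geodesic and has no conjugate points. It follows that $d_{\hs}(\tilde f(x),\tilde f(y))$ equals the length of that path, which is $d_{\tilde X}(x,y)>0$ for $x\neq y$. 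So $\tilde f$ is an isometric embedding, in particular injective, and its image is a convex subset of $\hs$.

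The delicate point is exactly the claim that shortest paths in a manifold with locally convex boundary are local geodesics. I plan to handle it by local charts. Near a boundary point $x$, a neighborhood of $x$ in $\tilde X$ is isometric to a neighborhood of a point in a convex subset of $\hs$. Inside a convex set the chord between two points lies in the set and is strictly shorter than any other path. So a shortest path that is not straight near $x$ could be shortened, a contradiction. In addition, $\tilde X$ is simply connected with a local-isometric developing map, so it is a CAT$(-1)$ space by Cartan--Hadamard for locally convex spaces (Alexander--Bishop). This gives uniqueness of the geodesics and rules out any ambiguity.

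\textbf{Step 3: the application to $\ch(Q)$.} By Proposition~\ref{prop_reglue}, $\ch(Q)$ is a closed convex, hence complete, simply connected subset of $\hs$. Its frontier is convex, and its interior is nonempty because $\ch(Q)$ contains the $\epsilon$-neighborhood $M_\epsilon(Q)$. Since $Q$ and $\ch(Q)$ lie at bounded Hausdorff distance, $\mathrm{Stab}(\ch(Q))=\mathrm{Stab}(Q)$. This group acts freely, because it is a subgroup of the torsion-free group $\pi_1(M)$, and properly discontinuously, so $X=\mathrm{Stab}(Q)\backslash\ch(Q)$ is a complete hyperbolic manifold with convex boundary. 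The covering projection $\hs\to M$ restricted to $\ch(Q)$ descends to a local isometry $f\colon X\to M$. By Steps 1 and 2, $\pi_1(X)\cong\mathrm{Stab}(Q)$ injects into $\pi_1(M)$. Finally, $Q\subset\ch(Q)$ is $\mathrm{Stab}(Q)$-invariant and a deformation retract of $\ch(Q)$ (using nearest-point retraction in the fibers of the bounded neighborhood), so $S_0=\mathrm{Stab}(Q)\backslash Q\hookrightarrow X$ is a $\pi_1$-isomorphism. Composing with $f$ shows that $S_0$ is $\pi_1$-injective in $M$.
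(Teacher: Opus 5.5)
Your proposal is correct and follows essentially the paper's route: lift $f$ to the universal cover, use convexity to show the lift is injective, and conclude that any element of $\ker f_*$ is a deck transformation with a fixed point. The difference is that you prove directly the isometric-embedding fact the paper quotes from Canary--Epstein--Green (shortest paths in the universal cover are local geodesics, and their images are global geodesics of $\hs$), which makes the injectivity step cleaner than the paper's.

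Two small points in Step 3:
\begin{itemize}
\item Bounded Hausdorff distance does not by itself give $\mathrm{Stab}(\ch(Q))=\mathrm{Stab}(Q)$; naturality of hulls yields only $\mathrm{Stab}(Q)\subseteq\mathrm{Stab}(\ch(Q))$. Nothing downstream needs the equality if you let $\mathrm{Stab}(Q)$ act on $\ch(Q)$ throughout.
\item To get $\pi_1(S_0)\cong\mathrm{Stab}(Q)$ you only need $Q$ to be simply connected, not a nearest-point deformation retraction.
\end{itemize}
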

\begin{proof}
    Since $\partial X$ is convex, the universal cover of $X$ is isometrically embedded as a convex domain $C$ in $\mathbb{H}^{n+1}$ (see [\citenum{cegNotesNotesThur}, I.1.4.2]). Let $\Tilde{f}\colon  C\to \mathbb{H}^{n+1}$ be a $\pi_1(X)$-equivariant local isometry lifted by $f$. Since $C$ is a convex subset of $\mathbb{H}^{n+1}$, it is $\cat(-1)$ and therefore there exist a unique geodesics between any two points. This implies that $\Tilde{f}$ must be distance non-increasing, and because the target is simply connected, it must be injective. 

    For any $g \in \ker(f_*)$ and any $x\in C$, we have $\Tilde{f}(g \cdot x)=f_*(g )\cdot \Tilde{f}(x)=\Tilde{f}(x)$. By injectivity of $\Tilde{f}$, it follows that $g \cdot x=x$. As the $\pi_1(X)$ action on $C$ is free, the only deck transformation with a fixed point is the identity. Therefore, $g =1$ and $f_*$ is injective. 
\end{proof}

Since isotopy preserves the induced map on $\pi_1$, we derive the following corollary.
\begin{cor}[$\pi_1$-injectivity]\label{cor_pi1 injectivity}
    Let $S\subset M$ be the hypersurface obtained by smoothing the hypersurface $S_0$ in the previous proposition, where the smoothing is achieved by a small isotopy supported in a tubular neighborhood of the bending locus. Then $S$ is $\pi_1$-injective in $M$.
\end{cor}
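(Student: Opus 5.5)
The plan is to reduce \Cref{cor_pi1 injectivity} to \Cref{prop_pi1 injectivity} by a homotopy argument. The smoothing changes the pleated hypersurface $S_0$ only inside a small tubular neighborhood $U$ of the bending locus $L$. I will realize it as the time-one map of an isotopy $F\colon S_0\times[0,1]\to M$, with $F_0$ the inclusion of $S_0$, $F_1$ a parametrization of $S$, and $F_t$ equal to the identity outside $U$ for every $t$. In practice I would write $S_0$ near $L$ as a graph over one of the two totally geodesic sheets in Fermi coordinates $(-\epsilon,\epsilon)\times_{\cosh}\hat\Sigma$ along $L$. The corner is a Lipschitz graph, and I would replace it by a smooth graph agreeing with it outside a thinner collar. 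The straight-line homotopy between the two graph functions is then an isotopy through embeddings in $\hat M$, and composing with the covering projection to $M$ gives immersions.

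Next, $F_1$ and $F_0$ are homotopic as maps $S_0\to M$. Here $S$ is diffeomorphic to $S_0$ via $F_1\circ F_0^{-1}$, and $S_0$ is a topological manifold since it is obtained by cut-and-paste of closed manifolds along $L$. Therefore $(F_1)_*$ and $(F_0)_*\colon\pi_1(S_0,x)\to\pi_1(M)$ agree up to the change-of-basepoint isomorphism along the track $t\mapsto F_t(x)$. I can pick the basepoint $x$ outside $U$, so the track is constant and the two maps coincide. The map $(F_0)_*$ is the composition $\pi_1(S_0)\cong\mathrm{Stab}(Q)\to\pi_1(M)$, and this is injective by \Cref{prop_pi1 injectivity}. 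Hence $(F_1)_*$, which is the map induced by $S\hookrightarrow M$, is injective.

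The only step needing care is the identification $\pi_1(S_0)\cong\mathrm{Stab}(Q)$. This requires that $Q$ be a simply connected lift of $S_0$ in $\hs$ on which $\mathrm{Stab}(Q)$ acts freely and properly. Simple connectivity follows because $Q$ is a pleated plane, homeomorphic to $\R^n$ by nearest-point projection to $\ch(Q)$ and its boundary, which \Cref{prop_reglue} shows is at bounded distance. Freeness and properness follow because $\mathrm{Stab}(Q)\le\pi_1(M)$ is discrete and torsion-free. I expect the main obstacle to be ensuring the smoothing really is a small isotopy, that is, a homotopy through immersions supported in $U$. This matters because the bending loci are $2r(\theta)$-separated by \Cref{r vs theta for the bending locus}: if $U$ is chosen of width less than $r(\theta)$, the local modifications near distinct lifts of $L$ do not interact and can be carried out equivariantly.
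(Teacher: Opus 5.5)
Your proposal is correct and follows the same route as the paper, which simply notes that the smoothing isotopy (indeed any homotopy, and a basepoint-fixing one if the basepoint lies outside the support) leaves the induced map on $\pi_1$ unchanged, so injectivity is inherited from \Cref{prop_pi1 injectivity}. Your extra discussion of $\pi_1(S_0)\cong\mathrm{Stab}(Q)$ and of taking $U$ narrower than $r(\theta)$ is unnecessary: the proposition already asserts that $S_0$ is $\pi_1$-injective, only the existence of the homotopy matters, and the paper's actual smoothing in \Cref{subsection_asymptotically geodesic} even uses a radius $r>r(\theta_i)$.
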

The $\pi_1$-injectivity of the constructed hypersurfaces $S$ can also be deduced from the \ag property of $S$ in \Cref{prop_asymp_geodesic} and [\citenum{lcSmallCurvatureSurface}, Proposition 5.1] that \nege  hypersurfaces are \poin. However, the proof above also works for hypersurfaces that are not nearly geodesic. 

\subsubsection{Geometric properties of the resulting hypersurface}
To conclude this section, we present several additional properties of the $\pi_1$-injective hypersurface $S\subset M$ constructed in the previous subsection. We prove that $S$ is an immersion of closed hyperbolic $(n+1)$-manifold that is not homotopic to a totally geodesic hypersurface.

Yi Liu points out such surgered hypersurfaces are hyperbolizable. 
\begin{prop}\label{supporting real hyperbolic}
	Our constructions, which come from cutting and regluing two embedded totally geodesic hypersurfaces along one of their embedded totally geodesic hypersurface, are immersions of closed hyperbolic manifold. 
\end{prop}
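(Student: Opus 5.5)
We have to show that the closed $n$-manifold $S_0$ obtained by cutting $\hat{\Sigma}_1$ and $\hat{\Sigma}_2$ along $L$ and regluing admits a hyperbolic metric, and that the immersion $S\hookrightarrow M$ is homotopic to an immersion of this hyperbolic manifold. The plan is to build the hyperbolic structure on $S_0$ directly from the induced path metric and then identify its fundamental group with $\mathrm{Stab}(Q)$.

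First, we analyse a neighbourhood of $L$. Since $L$ is an embedded, two-sided, totally geodesic codimension-$1$ submanifold of each $\hat{\Sigma}_i$, and is nonseparating by the lemma on lifting to finite covers, cutting $\hat{\Sigma}_i$ along $L$ gives a compact connected hyperbolic manifold $\hat{\Sigma}_i'$. Its totally geodesic boundary consists of two copies $L_i^{\pm}$ of $L$. Regluing matches $L_1^{+}$ with $L_2^{-}$ and $L_2^{+}$ with $L_1^{-}$ by the identity of $L$, which is an isometry. A standard fact (gluing hyperbolic manifolds with totally geodesic boundary along isometries of the boundary) then gives a smooth hyperbolic metric on the glued manifold $S_0^{\mathrm{hyp}}$. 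Near the seam, the two half-collars $[0,\delta)\times_{\cosh} L$ and $(-\delta,0]\times_{\cosh} L$ fit into a collar $(-\delta,\delta)\times_{\cosh} L$, using the warped-product description of tubular neighbourhoods recalled in the preliminaries. So $S_0^{\mathrm{hyp}}$ is a closed hyperbolic $n$-manifold; it is connected and orientable once the orientations of $\hat{\Sigma}_1$ and $\hat{\Sigma}_2$ are chosen compatibly.

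Second, we realise $S_0^{\mathrm{hyp}}$ as $\mathrm{Stab}(Q)\backslash Q$. The pleated map $S_0^{\mathrm{hyp}}\to \hat M$ is a local isometry on each piece and is bent only along $L$. Developing it to $\hs$ sends the universal cover $\widetilde{S_0^{\mathrm{hyp}}}\cong\mathbb{H}^n$ onto the pleated plane $Q$, with the intrinsic metric on $Q$ equal to the hyperbolic one. Unbending along the lifts of $L$ gives an isometry $Q\cong\mathbb{H}^n$, and it is equivariant for the deck group. Hence $\pi_1(S_0^{\mathrm{hyp}})\cong \mathrm{Stab}(Q)$ acts on $\mathbb{H}^n$ as a cocompact lattice of $\SONO$, and by Proposition~\ref{prop_pi1 injectivity} it injects into $\pi_1(M)$.

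Third, we pass to the smoothing. By Corollary~\ref{cor_pi1 injectivity}, the smoothing $S$ is a small isotopy of $S_0$ supported near $L$. Therefore $\iota\colon S\hookrightarrow M$ is homotopic to the pleated immersion of $S_0^{\mathrm{hyp}}$, and the smooth source manifold $S$ is diffeomorphic to $S_0^{\mathrm{hyp}}$.

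The main obstacle is the second step: one must check that the developing map is well defined and that the deck action is by isometries of the intrinsic hyperbolic metric. This holds because the bending angle is constant along each lift of $L$, and distinct lifts are disjoint and far apart by Proposition~\ref{prop_reglue}. Given this, the intrinsic path metric on $Q$ is locally isometric to $\mathbb{H}^n$ at every point, including the points on the bending loci.
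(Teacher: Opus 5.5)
Your proposal is correct and follows the paper's route: the paper's entire proof is your first step, namely cutting $\hat\Sigma_1$ and $\hat\Sigma_2$ along the nonseparating totally geodesic $L$ and regluing the totally geodesic boundary copies by isometries, which yields a closed hyperbolic manifold (your warped-product collar remark just makes the smoothness across the seam explicit). Your second and third steps, identifying the deck group with $\mathrm{Stab}(Q)$ and passing to the smoothed $S$, are extra bookkeeping that the paper handles separately in Proposition~\ref{prop_pi1 injectivity} and Corollary~\ref{cor_pi1 injectivity}, and the local hyperbolicity at the bending loci that you flag as the main obstacle already follows from the first step.
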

\begin{proof}
	Take two closed $n$-dimensional hyperbolic manifolds $M_1, M_2$, with an embedded nonseparating totally geodesic hypersurfaces $L_1\hookrightarrow M_1$, $L_2\hookrightarrow M_2$ such that $L_1$ is isometric to $L_2$. 
	We glue $M_1-L_1$ and $M_2-L_2$ along their isometric boundary. 
	Note that gluing two hyperbolic manifolds along totally geodesic boundary via isometry makes an authentic hyperbolic manifold. 
\end{proof}

\begin{prop}\label{non totally geodesic}
    The hypersurface $S$ is neither totally geodesic nor homotopic to a totally geodesic hypersurface of $M$.
\end{prop}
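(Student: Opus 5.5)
The plan is to compare areas and use \Cref{totally geodesic area-minimizing}. Write $S_0=\mathrm{Stab}(Q)\backslash Q$ for the pleated hypersurface of \Cref{prop_pi1 injectivity}, and $S$ for its smoothing from \Cref{cor_pi1 injectivity}. By \Cref{supporting real hyperbolic}, $S_0$ with its intrinsic path metric is a closed hyperbolic $n$-manifold. The inclusion $S_0\to M$ is piecewise totally geodesic, hence a local isometry on each piece.

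\textbf{$S$ is not totally geodesic.} Near a point of the bending locus $L$, the smoothing of \Cref{cor_pi1 injectivity} replaces the two half-hyperplanes, which meet at dihedral angle $\theta<\pi$, by a hypersurface. That hypersurface interpolates between two distinct tangent hyperplanes. Its unit normal therefore rotates by the angle $\pi-\theta\neq 0$ across the smoothing region, so $\sff_S\neq 0$ somewhere.

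\textbf{$S$ is not homotopic to a totally geodesic hypersurface.} Suppose $S$ were homotopic to a totally geodesic immersion $\iota_0\colon S'\to M$. Since $S$ is $\pi_1$-injective, so is $\iota_0$, and $S'$ is a closed hyperbolic manifold with $\pi_1(S')\cong\pi_1(S)\cong\pi_1(S_0)$. By Mostow rigidity ($n\ge 3$) the hyperbolic structures on $S'$ and $S_0$ agree up to isometry in the homotopy class, so
\[
\area(S_0)=\area(S').
\]
For $n=2$ we do not need this equality, since the Gauss--Bonnet theorem gives area as a topological invariant. On the other hand, \Cref{totally geodesic area-minimizing} gives $\area(S')\le\area(S)$.

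It remains to show $\area(S)<\area(S_0)$, which yields a contradiction. Choose the smoothing as a graph over the pleat inside the $\epsilon$-tube around $L$, cutting the corner toward the convex side, as in [\citenum{kmsGeometricallyTopologicallyRandomsurface}, Figure 2].

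\textbf{Main obstacle: the strict area decrease.} The computation reduces to the 2-plane $\Sigma$ orthogonal to $L$ through each point. There, the replaced profile is a curve cutting the corner of an angle $\theta<\pi$. The area of the smoothed region is obtained by integrating the profile length against the warped volume factor along $L$, which is $\cosh$ of the distance to $L$ in Fermi coordinates. The key fact is that a chord-like curve cutting a corner in $\mathbb H^2$ is strictly shorter than the two legs, by the strict triangle inequality. One must check that this survives the weighting by the warped factor; choosing the smoothing profile inside a small symmetric neighborhood and comparing with the nearest-point projection onto the bent surface should suffice. If the weighted comparison proves awkward, the fallback is to use the strict monotonicity of $\area$ under the $1$-Lipschitz nearest-point projection onto $\ch(Q)$ from \Cref{prop_reglue}.
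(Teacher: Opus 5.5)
Your proposal is correct and follows the paper's overall strategy. $\sff_S\not\equiv 0$ because the smoothing must interpolate between two distinct hyperplanes. A totally geodesic representative $S'$ would then give $\area(S')\le\area(S^\flat)<\area(S_0)=\area(S')$ for some smoothing $S^\flat$, by \Cref{totally geodesic area-minimizing}.

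Two sub-steps are justified differently from the paper.

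\textbf{Volume equality.} The paper gets $\area(S_0)=\area(S')$ from $\vol=v_n\Vert\cdot\Vert$ (simplicial volume), which handles all $n\ge 2$ at once. Your Mostow-plus-Gauss--Bonnet argument is equivalent.

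\textbf{Strict area decrease.} The paper appeals to a strictly contracting nearest-point projection onto the convex ``circular part.'' That argument only sees the normal $2$-plane. Off that plane, the smoothed hypersurface curves toward $L$ just as the equidistant tube does, so neither side of it is convex. You correctly flag that the growth along $L$ has to be beaten. Note that the weight is $\cosh^{n-1}\rho$, not $\cosh\rho$, because the Fermi metric is $d\rho^2+\sinh^2\rho\,d\phi^2+\cosh^2\rho\,g_L$.

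Your ``should suffice'' closes in two lines. Let $\gamma$ be the geodesic chord joining the points at distance $s$ from $L$ on the two legs. Then $\sinh(c/2)=\sinh s\,\sin(\theta/2)$ and $\gamma$ stays in the $s$-ball, so for $s$ small
\[
\int_\gamma\cosh^{n-1}\rho\,ds\le 2\cosh^{n-1}(s)\sinh(s)\sin(\theta/2)<2s\le\int_{\mathrm{legs}}\cosh^{n-1}\rho\,ds .
\]
Then round off the two corners of $\gamma$; this costs arbitrarily little. Equivalently, you are cutting a corner of angle $\theta<\pi$ in the smooth conformal metric $\cosh^{2(n-1)}\rho\,g_{\mathbb H^2}$.

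The smallness is not cosmetic. At a fixed scale $r$, and to second order in $\pi-\theta$, a chord cut decreases area if and only if
\[
(n-1)\int_0^r\cosh^{n-1}t\,dt<\frac{\cosh^n r}{\sinh r}.
\]
For $n=3$ this reads $r\tanh r<1$.

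Choosing your own small smoothing is legitimate even though \Cref{subsection_asymptotically geodesic} smooths at a fixed scale. Only the homotopy class of $S_0$ enters the contradiction, and every smoothing lies in it.

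Drop the fallback. Since $Q\subset\ch(Q)$, projecting onto $\ch(Q)$ moves nothing.
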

\begin{proof}
    We first prove that $S$ itself is non totally geodesic, which is equivalent to showing that the second fundamental form $\sff_S$ is not identically zero. Assume for contradiction that $\sff_{\hat{S}} \equiv 0$ on $\hat{S}\cap U$, where $U\subset\mathbb{H}^{n+1}$ is a tubular neighborhood of the bending locus that contains the smoothing region in the above construction. Then $\hat{S}\cap U$ is totally geodesic, hence it is contained in a unique totally geodesic hyperplane $P\subset\mathbb{H}^{n+1}$. Since $P$ coincides with $Q_1$ and $Q_2$ in two open subsets of $U\setminus L$, we must have $Q_1\subset P$ and $Q_2\subset P$, and therefore $Q_1=Q_2$. This contradicts $\theta\subset (0,\pi)$.

We now prove that $S$ is not homotopic to smoothly immersed totally geodesic hypersurfaces. Recall that $S_0$ is the hypersurface constructed in Proposition~\ref{prop_pi1 injectivity} by cutting and regluing two totally geodesic pieces. We have that $S_0$ is homotopic to $S$, and $S_0$ is pleated whose volume satisfies
$$
\vol(S_0) = v_n  \Vert S_0 \Vert,
$$
where $\Vert S_0 \Vert$ denotes the simplicial volume that is homotopy invariant, and $v_n $ is the maximal volume of an ideal $n$-simplex in $\mathbb H^n $. 

By \Cref{totally geodesic area-minimizing}, a totally geodesic immersion minimizes area within its homotopy class. The construction replaces a neighborhood of the locus by circular smoothing. The circular part is convex. By [\citenum{bhMetricSpaceNonpositive}, II.2.5], the nearest point projection to a convex subset is $1$-Lipschitz. Moreover, in hyperbolic space, the distance function between two geodesics is strictly convex. There is a nearest point projection from the neighborhood to the circular part, which is strictly contracting. Thus the area decreases after the surgery. 
This constructs a smooth hypersurface $S_0'\subset M$ homotopic to $S_0$ with $\vol(S_0') < \vol(S_0)$. If $S_0'$ were homotopic to a smooth totally geodesic hypersurface $S_0''$, then it would produce a contradiction to Theorem~\ref{totally geodesic area-minimizing}:
$$
\vol(S_0') < \vol(S_0) = v_n  \Vert S_0 \Vert = \vol(S_0'').
$$
\end{proof}
By \Cref{nearly geodesic hypersurfaces}, $S$ is negatively curved. Moreover, as a consequence of \Cref{nearly geodesic hypersurfaces} and \Cref{almost geo of arithmetic are virtually embedded}, the sequence of hypersurfaces constructed is eventually virtually embedded and virtually homologically injects. 

\subsection{The equidistribution}\label{subsection_equidistribution}
In this subsection, we begin with a sequence of totally geodesic hypersurfaces $\Sigma_i$ that equidistribute in $M$. Applying the construction of Subsection~\ref{subsection_construction of pi1-injective hypersurface} to each $\Sigma_i$, we obtain a sequence of $\pi_1$-injective hypersurfaces $S_i$. We then establish the equidistribution properties of the sequence $\{S_i\}$.

Let $M$ be a closed arithmetic hyperbolic $(n+1)$-manifold and let 
$\Sigma_i\subset M$ be a sequence of immersed totally geodesic hypersurfaces whose
associated probability Radon measures $\mu_{\Sigma_i}$ on the Grassmannian bundle 
$\gm M$ equidistribute.  This means that $\mu_{\Sigma_i}$ converges to the Liouville measure $\mu_{\mathcal{L}}$ in the weak-* topology.
For each $i$, let $L_{i}^0$ be a closed totally geodesic 
submanifold of codimension $2$ arising from a transverse self-intersection 
of $\Sigma_i$, that is, $\Sigma_i$ lifts to embedded finite covers $\hat{\Sigma}_{i,1}^0$ and $\hat{\Sigma}_{i,2}^0$ such that $\hat{\Sigma}_{i,1}^0\cap \hat{\Sigma}_{i,2}^0 = L_i^0$.
After passing to suitable finite covers $\hat{\Sigma}_{i,1}, \hat{\Sigma}_{i,2}$ of $\hat{\Sigma}_{i,1}^0, \hat{\Sigma}_{i,2}^0$, the associated cover $L_i$ of $L_i^0$ is nonseparating. 

Following the above procedures, one can construct a sequence of smooth immersed $\pi_1$-injective hypersurfaces $S_i\subset M$. The lift $\hat{S}_i$ is obtained by cutting and regluing along $L_i$ and smoothing inside a tubular neighborhood $M_{r}(L_i)$.
By Lemma~\ref{lem_angle density}, we may assume that the angle between $\hat{\Sigma}_{i,1}$ and $\hat{\Sigma}_{i,2}$ converges to $\pi$ as $i\to\infty$. It then follows from Lemma~\ref{r vs theta for the bending locus} that the radius $r>r_i$ is a uniform constant for sufficiently large $i$. 

In the lemma below, we prove the equidistribution of the probability Radon measures $\mu_{S_i}$ associated with $S_i$ on $\gm M$.

\begin{prop}[Equidistribution property]\label{prop_equidistribution}
Let $B_i:= \hat{S}_i\cap M_r(L_i)$ be the smoothing region. Then $$ 
    \frac{\vol(B_i)}{\vol(\hat{S}_i)} \to 0, \quad i\to \infty.
$$ 
\end{prop}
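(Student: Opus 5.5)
We need $\vol(B_i)/\vol(\hat S_i)\to 0$. The plan is to bound the numerator by a constant times $\vol_{n-1}(L_i)$, and to bound the denominator from below by the area of the totally geodesic pieces. Then we compare the area of the codimension-$2$ locus with that of the hypersurfaces, using that $r$ is a uniform constant.

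\textbf{Step 1 (numerator).} Inside the tubular neighborhood $M_r(L_i)$, the smoothed hypersurface $B_i$ is a graph over the union of the two totally geodesic half-collars $(\hat\Sigma_{i,1}\cup\hat\Sigma_{i,2})\cap M_r(L_i)$ with uniformly bounded $C^1$ norm, since the angle is close to $\pi$ and the smoothing is a fixed model profile in the $2$-plane orthogonal to $L_i$. In Fermi coordinates about the totally geodesic $L_i$, the hyperbolic metric on a collar of width $r$ is warped by factors $\cosh(t)$ bounded in terms of $r$ alone, as in the lemma from \cite{baTubularNeighborhoodsTotallyGeodesic}. Hence
\[
\vol(B_i)\le C(r,n)\,\vol_{n-1}(L_i)
\]
with $C$ independent of $i$, using that $r$ is uniform for large $i$.

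\textbf{Step 2 (denominator).} By construction $\hat S_i$ contains $\hat\Sigma_{i,1}\cup\hat\Sigma_{i,2}$ minus $M_r(L_i)$. Therefore
\[
\vol(\hat S_i)\ge \vol(\hat\Sigma_{i,1})+\vol(\hat\Sigma_{i,2})-C'(r,n)\,\vol_{n-1}(L_i).
\]
So it suffices to show $\vol_{n-1}(L_i)/\vol(\hat\Sigma_{i,k})\to 0$ for $k=1,2$. All the objects live in finite covers, so we compare the degrees of the covers. We choose the covers so that $\hat\Sigma_{i,k}\to\Sigma_i$ and $L_i\to L_i^0$ have comparable degrees; this holds when $L_i$ is a single lift, as in the construction via \cite{bhwHyperplaneSection}, Corollary 1.6. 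The ratio then reduces to $\vol_{n-1}(L_i^0)/\vol(\Sigma_i)$, up to a bounded factor.

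\textbf{Step 3 (main obstacle).} We must show the intersection locus is small relative to $\Sigma_i$. Equidistribution of $\Sigma_i$ \cite{msErgodicInvariantMeasure} gives $\vol(\Sigma_i)\to\infty$. For the locus itself we do not take the whole self-intersection of $\Sigma_i$, whose volume is comparable to $\vol(\Sigma_i)^2$. Instead we take a single component $L_i^0$, for example one found in the $\epsilon$-ball of Lemma~\ref{lem_angle density} with angle near $\pi$. We then argue that its volume is $o(\vol(\Sigma_i))$.

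The approach I would try first is to fix the component so that it ranges over finitely many, or at least slowly growing, commensurability classes. Equivalently, we pass to covers in which $\hat\Sigma_{i,k}$ has much larger degree than $L_i$. This is possible by taking further finite covers of $\hat\Sigma_{i,k}$ in which $L_i$ lifts isomorphically but the total volume multiplies: by separability of the arithmetic subgroup $\pi_1(L_i)$ (Theorem~\ref{separable and virtually embedded}), we can take covers of large degree in which $L_i$ still embeds nonseparatingly. Performing the cut-and-reglue only along one such lift makes the ratio in Step 2 as small as we like, and choosing the degree $d_i\to\infty$ gives the limit. Verifying that this extra covering is compatible with the regluing and with equidistribution is the delicate point. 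The smoothing region then has proportion at most $C\,\vol(L_i)/(d_i\vol(\Sigma_i))\to 0$.
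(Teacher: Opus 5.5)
Your final argument is exactly the paper's proof. You bound $\vol(B_i)$ by $c(r)\,\vol(L_i)$, bound $\vol(\hat S_i)$ below by the volume of the totally geodesic pieces, and use separability of $\pi_1(L_i)$ to pass to covers of arbitrarily large degree $d_i$ in which $L_i$ lifts isomorphically. You are in fact more explicit than the paper that the large-index subgroup must contain $\pi_1(L_i)$. The ``comparable degrees'' detour in Step 2 and the question of whether $\vol(L_i^0)=o(\vol(\Sigma_i))$ are unnecessary, because $d_i$ is chosen after $\Sigma_i$ and $L_i^0$ are fixed; and the ``delicate point'' you flag is harmless, since both covers contain an isometric copy of the same $L_i$ (so the abstract regluing goes through) and finite covers of $\Sigma_i$ induce the same normalized measure on $\gm M$.
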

This implies that the measure induced by $B_i$ is negligible in $\mu_{S_i}$. Thus $\mu_{S_i}$ shares the same limit with $\mu_{\Sigma_i}$. Therefore, $$ 
    \mu_{S_i} \overset{\ast}{\rightharpoonup} \mu_{\mathcal{L}}, \quad i\to\infty.
$$  

\begin{proof}
Given the choices of $\hat{\Sigma}_{i,1}^0, \hat{\Sigma}_{i,2}^0, L_i^0$, we now discuss the precise construction of $\hat{\Sigma}_{i,1}$, $\hat{\Sigma}_{i,2}$, $L_i$.

By subgroup separability for geometrically finite subgroups of arithmetic 
lattices, there exists a finite index subgroup 
$H_i< \pi_1(\hat{\Sigma}_{i,j}^0)$ such that the inclusion 
$\pi_1(L_i^0)\cap \pi_1(\hat{\Sigma}_{i,j}^0) \hookrightarrow \pi_1(\hat{\Sigma}_{i,j}^0)$ lifts injectively to the cover 
$\hat{\Sigma}_{i,j}^1\to \hat{\Sigma}_{i,j}^0$ corresponding to $H_i$, where $j=1,2$. 
Equivalently, a lift $L_i^1\subset \hat{\Sigma}_{i,j}^1$ of $L_i^0$ is embedded and nonseparating. Furthermore, this ensures that the degree of the covering restricted to $L_i^0$ is $$ 
   c_i:=[\pi_1(L_i^0): H_i\cap \pi_1(L_i^0)]< \infty,
$$ 
where $c_i$ depends only on the triple $(\hat{\Sigma}_{i,1}^0, \hat{\Sigma}_{i,2}^0, L_i^0)$.

Next, we choose a further finite index subgroup $G_i< H_i$ of large index.  Let $$ 
    d_i: = [\pi_1(\hat{\Sigma}_{i,j}^0):G_i] 
$$ 
which will be determined later. 
The lifted hypersurface $\hat{\Sigma}_{i,j}$ in the cover corresponding to $G_i$ satisfies
$$ 
    \vol(\hat{\Sigma}_{i,j}) = d_i \,\vol(\hat{\Sigma}_{i,j}^0).
$$ 
The lifted submanifold $L_i\subset \hat{\Sigma}_{i,j}$ satisfies
$$ 
    \vol(L_i) \leq c_i\, \vol(L_i^0).
$$ 

In tubular coordinates, $M_r(L_i) \cong L_i \times D^2_{r}$, and $\hat{S}_i$ is obtained from $\hat{\Sigma}_{i,j}$ by performing the cut-and-paste construction along $L_i$ and smoothing only inside $M_r(L_i)$.
By the construction of the smoothing region, $B_i=\hat{S}_i \cap M_r(L_i)$
is a uniformly Lipschitz graph over $L_i$, contained in $L_i \times G _i$, where $G _i\subset D^2_r$ is a smooth curve whose length admits an upper bound depending only on $r$. Hence,
$$ 
    \vol(B_i) = \vol\left(\hat{S}_i\cap M_r(L_i)\right)
\leq c(r)\,\vol(L_i),
$$ 
for a constant $c(r)$ depending only on $r$.

Combining the above estimates yields
$$ 
    \vol(B_i)\leq c(r) \,c_i\,\vol(L_i^0).
$$ 
On the other hand,
$$ 
    \vol(\hat{S}_i)> \vol(\hat{\Sigma}_{i,j})= d_i\, \vol(\hat{\Sigma}_{i,j}^0).
$$ 
Thus
$$ 
    \frac{\vol(B_i)}{\vol(\hat{S}_i)}\leq \frac{c(r)\, c_i \, \vol(L_i^0)}{d_i\,\vol(\hat{\Sigma}_{i,j}^0)}.
$$ 

Since $d_i$ can be chosen arbitrarily large, while 
$c_i$, $\vol(L_i^0)$ and $\vol(\hat{\Sigma}_{i,j}^0)$ are predetermined, 
we may arrange $\vol(B_i)/\vol(\hat{S}_i) < \frac{1}{i}$. Hence $\vol(B_i)/\vol(\hat{S}_i)\to 0$, completing the proof.
\end{proof}

\subsection{The asymptotically geodesic property}\label{subsection_asymptotically geodesic}
In this subsection, we describe a more precise smoothing procedure near the codimension-$2$ intersection locus. This refined smoothing ensures that the sequence $\{S_i\}$ is asymptotically geodesic. 

Fix $i$ and let $M_{r}(L_i)$ be the tubular neighborhood of $L_i$, where $r$ is a uniform constant to be determined later.
The normal exponential map provides a diffeomorphism $$\exp^\perp\colon  L_i\times D^2_{r} \xrightarrow{\cong} 
M_{r}(L_i),$$ where $D^2_{r}\subset\mathbb{H}^2$ denotes the geodesic disk of radius 
$r$ in the normal $2$-plane.
In these tubular coordinates, the totally geodesic hypersurfaces 
$\hat \Sigma_{i,1}$ and $\hat \Sigma_{i,2}$ correspond to two totally geodesic
rays in each normal $2$-plane based at the origin.  
More precisely, for each $x\in L_i$, the intersection $(\hat \Sigma_{i,1}\cup\hat \Sigma_{i,2})\cap 
(\{x\}\times D^2_{r})$ is the union of two geodesic rays in $D^2_{r}$ meeting at an angle 
$\theta_i\in(0,\pi)$ at the origin.  Assume that $\theta_i\to\pi$ as $i\to\infty$. Furthermore, let $$r_i:=r(\theta_i)$$ be the constant chosen in Lemma~\ref{r vs theta for the bending locus}. Since $r(\theta)$ decreases by $\theta$, we may choose a radius $r$ of the tubular neighborhood so that $r>r_i$ for any sufficiently large $i$.

After cutting and regluing along $L_i$ as in the previous section, we obtain a $C^0$ hypersurface $\hat{S}_i^{(0)}$ which coincides with $\hat{\Sigma}_{i,1}\cup \hat{\Sigma}_{i,2}$ outside $M_{r}(L_i)$ and whose intersection with each normal $2$-plane is a union of two rays 
meeting with angle $\theta_i$.

We now smooth the corner of $\hat{S}_i^{(0)}$ inside the tubular 
neighborhood. Fix $x\in L_i$ and consider the geodesic rays in
$\{x\}\times D^2_{r}$. 
On each of the rays we mark the point at distance $r$ from the origin. There exists a unique Euclidean circle $S^1_{i}$ whose center lies on the angle bisector and which is tangent to both rays at these marked points. 

The radius of $S^1_{i}$ is 
$$
R_i = r\tan\left(\frac{\theta_i}{2}\right).$$
Since $\theta_i\to\pi$, we have $\tan\left(\frac{\theta_i}{2}\right)\to\infty$, and hence $\lim_{i\to\infty} R_i=\infty$.

We now construct a smooth curve $G _{i,x}\subset D^2_{r}$ which agrees with the union of the two geodesic rays outside $D_r$ and coincides inside $D_r$ with $S^1_{i}\cap D_r$. In the Euclidean metric the curvature of this arc is $1/R_i$, and by the uniform $C^2$-closeness of the hyperbolic metric to the Euclidean metric on $D^2_{r}$, we obtain a constant $C$ that is independent of $i$, such that the geodesic curvature of $G _{i,x}$ in the hyperbolic metric satisfies
\begin{equation}\label{equ_geodesic_curvature_bound}
    k_g(G _{i,x}) \leq \frac{C}{R_i}.
\end{equation}

By construction the map $x\mapsto G _{i,x}$ can be chosen to 
depend smoothly on $x\in L_i$, so that the union
$$ 
    B_i:= \bigcup_{x\in L_i} \{x\}\times G _{i,x}\subset L_i\times D^2_{r}
$$ 
is a smooth embedded hypersurface in the tubular neighborhood.  
Outside $B_i$ we keep $\hat{S}_i^{(0)}$ unchanged.  
This yields a smooth embedded hypersurface $\hat{S}_i$ which agrees with $\hat{\Sigma}_{i,1}\cup\hat{\Sigma}_{i,2}$ outside 
$M_{r}(L_i)$ and coincides with $B_i$ inside the tubular 
neighborhood.

We consider the second fundamental form on the smoothing region $B_i$. Along $L_i$ the hypersurface is totally geodesic in the tangential directions. Thus the only nonzero principal curvature comes from the normal $2$-plane direction, and is controlled by the geodesic curvature $k_g(G _{i,x})$.

\begin{prop}[Asymptotically geodesic property]\label{prop_asymp_geodesic}
Let $S_i\subset M$ be the smooth hypersurfaces constructed
above by surgery on the totally geodesic hypersurfaces 
$\hat \Sigma_{i,1},\hat \Sigma_{i,2}$ intersecting along $L_i$.  
Let $\theta_i\in(0,\pi)$ denote the intersection angle of 
$\hat \Sigma_{i,1}$ and $\hat \Sigma_{i,2}$ with $\theta_i\to\pi$.
Then the second fundamental form of $S_i$ satisfies
$$ 
    \sup_{S_i} |\sff_{S_i}| \to 0
\quad\text{as } i\to\infty.
$$ 
Thus $S_i$ are asymptotically geodesic.
\end{prop}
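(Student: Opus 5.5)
We bound $|\sff_{S_i}|$ region by region. Outside the tubular neighborhood $M_r(L_i)$, the lift $\hat S_i$ coincides with pieces of the totally geodesic hypersurfaces $\hat\Sigma_{i,1}\cup\hat\Sigma_{i,2}$, so $\sff_{S_i}\equiv 0$ there. Since $S_i$ is the image of $\hat S_i$ under a local isometry, it suffices to show
\[
\sup_{B_i}|\sff_{\hat S_i}|\le \frac{C'}{R_i}
\]
for a constant $C'$ independent of $i$. Because $R_i=r\tan(\theta_i/2)\to\infty$ as $\theta_i\to\pi$ with $r$ fixed (possible since $r>r_i$ uniformly, by Lemma~\ref{r vs theta for the bending locus}), this gives $\sup_{S_i}|\sff_{S_i}|\to0$.

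On $B_i=\bigcup_{x\in L_i}\{x\}\times G_{i,x}$ we work in Fermi coordinates $\exp^\perp\colon L_i\times D^2_r\to M_r(L_i)$. In the universal cover, a lift of $L_i$ is a totally geodesic $\mathbb H^{n-1}$, and the metric takes the form
\[
ds^2=\cosh^2\rho\,g_{\mathbb H^{n-1}}+d\rho^2+\tfrac14\sinh^2(2\rho)\,d\phi^2
\]
(up to the standard warping). We choose $G_{i,x}=G_i$ independent of $x$; this is allowed because $\theta_i$ is constant along $L_i$, both hypersurfaces being totally geodesic. Then $\hat B_i=\mathbb H^{n-1}\times G_i$ is invariant under the isometries of $\mathbb H^{n+1}$ preserving $\tilde L_i$ and acting on $\tilde L_i$; these act transitively on $\tilde L_i$ and commute with the normal-disk coordinates. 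So $\sff$ at a point $(x,y)$ depends only on $y\in G_i$.

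We compute the shape operator in the splitting $T_{(x,y)}\hat B_i=T_x\text{-horizontal}\oplus T_yG_i$.
\begin{enumerate}
\item In the $T_yG_i$ direction, the normal 2-plane $\{x\}\times D^2_r$ is totally geodesic, being the exponential image of the normal space to a totally geodesic submanifold. Hence $\sff(\dot\gamma,\dot\gamma)=k_g(G_i)$, which is at most $C/R_i$ by \eqref{equ_geodesic_curvature_bound}.
\item In the horizontal directions $e$ tangent to the equidistant level sets $\{\rho=\text{const}\}$, we use that these level sets are umbilic with principal curvature $\tanh\rho$ in the directions parallel to $L_i$. We get $\sff(e,e)=\tanh\rho\,\langle\partial_\rho,\nu\rangle$, where $\nu$ is the unit normal of $\hat B_i$. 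Here $\langle\partial_\rho,\nu\rangle=\sin\psi$, with $\psi$ the angle between $G_i$ and the radial direction, and the arc $G_i$ turns by $\pi-\theta_i$ in total. So $|\sin\psi|\lesssim\pi-\theta_i$, which by Lemma~\ref{r vs theta for the bending locus} is comparable to $1/R_i$ for fixed $r$.
\item The mixed terms $\sff(e,\dot\gamma)$ vanish by the reflection symmetry in the $L_i$ directions, or equivalently because $\nu$ lies in the normal disk and the warped product has no cross terms.
\end{enumerate}
Hence all principal curvatures are $O(1/R_i)+O(\pi-\theta_i)$ uniformly.

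Finally, at the two seams where $G_i$ meets the geodesic rays, $\sff$ is only piecewise continuous, jumping between $0$ and $O(1/R_i)$. The $L^\infty$ bound is unaffected. If $C^2$ smoothness is required, we would mollify the curvature function of $G_i$ along the arc, which changes the sup of $k_g$ by at most a factor of $2$.

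The main obstacle is the estimate in step 2: making precise that the horizontal principal curvatures are controlled by the tilt angle of $G_i$ relative to the radial direction, rather than only by $k_g$. The paper's assertion that "the only nonzero principal curvature comes from the normal 2-plane" is false as stated, since equidistants to $L_i$ are not totally geodesic. I would first verify the warped-product formula above for the shape operator of a rotationally product hypersurface $\mathbb H^{n-1}\times\gamma$. If the Euclidean-circle parametrization makes the tilt bound messy, I would replace $G_i$ by a hyperbolic curve of constant geodesic curvature tangent to both rays, for which $\psi$ is bounded explicitly by $\pi-\theta_i$.
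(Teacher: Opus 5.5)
You correctly spotted the flaw in the paper's proof. Write $\mathbb{H}^{n+1}=\mathbb{H}^2\times_{\cosh\rho}\mathbb{H}^{n-1}$; the normal disk is a totally geodesic $\mathbb{H}^2$, so its metric is $d\rho^2+\sinh^2\rho\,d\phi^2$, not $\tfrac14\sinh^2(2\rho)\,d\phi^2$. Then $\tilde L_i\times G_i$ has principal curvature $k_g(G_i)$ along $\dot\gamma$, no mixed terms, and principal curvature $\tanh\rho\,\langle\partial_\rho,\nu\rangle$ with multiplicity $n-1$ in the $L_i$-directions. That last curvature does not vanish on the arc, so the paper's claim that only one principal curvature is nonzero is false. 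Your items 1 and 3 and this formula are correct.

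\textbf{The gap is your bound for the horizontal term.} The claim $|\sin\psi|\lesssim\pi-\theta_i$ is false. The angle $\psi$ is measured against $\partial_\rho$, which is not a fixed direction. Along the arc from the tangency point $p_1$ on one ray to $p_2$ on the other, $\partial_\rho$ rotates by $\theta_i\approx\pi$, while $\dot\gamma$ rotates only by $\pi-\theta_i$. Where $G_i$ crosses the angle bisector, $\dot\gamma\perp\partial_\rho$, so $|\langle\partial_\rho,\nu\rangle|=1$ for every $i$. Your fallback curve of constant geodesic curvature has the same defect, as does any curve symmetric about the bisector. So the turning-angle argument does not control the horizontal curvatures.

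\textbf{The missing idea is to bound the product.} The factor $\tanh\rho$ is small exactly where $\sin\psi$ is large. Let $h$ be the distance from the origin $o$ of the normal disk to the geodesic $\ell$ tangent to $G_i$ at $y$. The right triangle with hypotenuse $oy$ gives $\sinh h=\sinh\rho\,|\sin\psi|$, hence $\tanh\rho\,|\sin\psi|\le\sinh h$.

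The smoothed curve is $C^1$, made of the outer ray pieces and an arc with $k_g\ge 0$. It therefore bounds a convex region $K$ inside the sector with $o\notin K$, and $\ell$ supports $K$. Moreover $o$ lies on the other side of $\ell$. If $o$ were on the side of $K$, then, since $K$ contains the outer pieces of both rays, convexity would put both full rays and hence the whole sector on that side. That contradicts the fact that $y$ is an interior point of the sector lying on $\ell$.

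Let $m$ be the midpoint of $[p_1,p_2]$; then $m\in K$ by convexity, so $h\le d(o,m)$. The right triangle $o\,m\,p_1$ gives $\tanh d(o,m)=\tanh r\cos(\theta_i/2)=O(\pi-\theta_i)$. Hence the horizontal principal curvatures are at most $\sinh\bigl(\operatorname{artanh}(\tanh r\cos(\theta_i/2))\bigr)\to 0$, uniformly along $B_i$. With this replacement for step 2 your proof goes through.

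Two minor points. The level sets $\{\rho=\mathrm{const}\}$ are not umbilic: their principal curvature in the circle direction is $\coth\rho$, though you only use the $L_i$-directions. Also, $\pi-\theta_i\approx 2r/R_i$ follows from $R_i=r\tan(\theta_i/2)$ rather than from Lemma~\ref{r vs theta for the bending locus}.
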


\begin{proof}
As observed above, at each point of $B_i$, all but one principal curvature vanish, and the remaining nonzero principal curvature is comparable to the geodesic 
curvature $k_g(G _{i,x})$ in the normal $2$-plane (up to a uniform constant depending on $r$).  
Thus by \eqref{equ_geodesic_curvature_bound}, we derive that $$ 
    |\sff_{S_i}| \lesssim k_g(G _{i,x})\leq \frac{C}{R_i}
\to 0.
$$ 

Furthermore, all derivatives of $\gamma_{i,x}$ of order $\geq 2$ are uniformly bounded, so $\gamma_{i,x}$ converges smoothly to a geodesic arc in $D^2_r$ as $\theta$ tends to $\pi$, uniformly in $x$. It then follows that the exponential map has uniformly bounded derivatives on $D_r^2$. Hence $\hat{S}_i\cap B_i$ converges smoothly on compact subsets in $M_r(L_i)$. Outside this tubular neighborhood, $\hat{S}_i$ coincides identically with the totally geodesic hypersurfaces $\hat{\Sigma}_{i,1}\cup \hat{\Sigma}_{i,2}$, which also ensures the smooth convergence. Therefore, $\hat{S}_i$  is  asymptotically geodesic. 
\end{proof}

Theorem~\ref{thm_main_of_sec4} follows by combining Corollary~\ref{cor_pi1 injectivity} and Propositions~\ref{prop_equidistribution} and~\ref{prop_asymp_geodesic}. By choosing $\Sigma_{i,1}=\Sigma_{i,2}$ in the construction, one obtains arithmetic $n$-manifolds $S_i$. By choosing $\Sigma_{i,1}$ and $\Sigma_{i,2}$ from distinct commensurability classes of totally geodesic hypersurfaces, one obtains non-arithmetic examples by~\cite{gpsNonarithmeticHyperbolic}. Moreover, the proof yields infinitely many examples for which $\pi_1(S_i)$ is arithmetic, and  infinitely many examples for which $\pi_1(S_i)$ is non-arithmetic. This proves the final claim of \Cref{arithmetic of type I contains nice hypersurfaces}.

\begin{rem}
	From the proof it is clear that \Cref{arithmetic of type I contains nice hypersurfaces} also holds for finite-volume arithmetic hyperbolic manifolds which contain a closed totally geodesic hypersurfaces. However, for dimensions $n \geq 5$, no non-compact arithmetic hyperbolic $n$-manifolds contain a co-dimension-$1$ closed totally geodesic hypersurface. The fundamental groups of such manifolds are defined by quadratic forms of signature $(n,1)$ over $\Q$. Meyer's theorem says that all such forms of signature $(n,1)$ over $\Q$ are isotropic whenever $n\geq 4$. Since a co-dimension-$1$ totally geodesic hypersurface comes from a form over $\Q$ of signature $(n-1,1)$, whenever $n-1\geq 4$, the form is isotropic, and so the hypersurface is noncompact.
\end{rem}
A natural question which clarifies the construction is the necessity of making $\theta$ the bending angle tend to $\pi$ in order to construct a sequence of \ag hypersurfaces. Both constructions in [\citenum{mjCountingImmersedSurfaces}, Section 4] and [\citenum{kmCountingEssentialSurfaces}, Section 4] in a closed hyperbolic $3$-manifold $M$ rely on cut-and-paste on a pair of (nearly) geodesic surfaces which are intersecting with uniformly bounded angle. First of all, \cite{alAsymptoticallyGeodesicsurfaces,hxhNearlyGeodesicFilling} prove that \ag surfaces in hyperbolic $3$-manifold are asymptotically dense and by \Cref{ag implies eventually strongly filling} are linked with every geodesic. In [\citenum{mjCountingImmersedSurfaces}, Section 4] Master takes a single self-intersecting totally geodesic surface $S$, and apply surgery to increasingly higher degree covers of $S$. If $S$ is disjoint from a closed geodesic $\gamma$, then Master's construction will produce a sequence $S_n$ of surfaces that is also homotopically disjoint from $\gamma$, and thus cannot be strongly filling or \agn. 

If we let $p_n\in M$ be a convergence sequence of points in the bending locus of $S_n$ with bending angle $\geq \theta$, then in [\citenum{mjCountingImmersedSurfaces}, Section 4] the lifts of the $R_n$-neighborhood of $p_n$ to the universal cover \emph{limit} to two intersecting totally geodesic half-planes $Q_1\cup Q_2$ with uniformly bounded $\theta$. If $S_n$ were \agn, the convex core should have width tending to $0$, and any bent geodesic $\beta$ on the limiting pleated plane $Q_1\cup Q_2$ should be arbitrarily close to the actual geodesic $\beta'$ with endpoints $\beta(\pm \infty)$. However with the uniform bent angle results in a uniform gap between $\beta$ and $\beta'$ and a uniform lower bound of the width of the convex cores by \Cref{width of convex hull}.

\subsection{Proof of \Cref{non superrigid}}\label{proof of 1.2}
\begin{proof}
	Let $\Gamma$ be a cocompact arithmetic lattice of $\SOPO$ of type I (with $n\geq 2$). Let $H$ be a cocompact lattice of $\SONO$ arising from the constructions above, so that $H$ may be arithmetic or non-arithmetic. Let $\iota_*\colon  H\to \Gamma$ be an injective homomorphism corresponding to a non totally geodesic immersion $S\hookrightarrow M$, where $H=\pi_1(S), \Gamma=\pi_1(M)$. Denote the inclusion maps as $\iota_H\colon  H \to \SONO$ and $\iota_\Gamma\colon \Gamma \to \SOPO$. 
Suppose there is a Lie group homomorphism $\alpha\colon  \SONO\to \SOPO$ that extends $\iota_*\colon  H\to \Gamma$. Then the following diagram commutes: 
$$
\begin{tikzcd}
	\SONO \arrow[r,"\alpha"]  &
	\SOPO  \\
	H \arrow[u,"\iota_H"'] \arrow[r,"\iota_*"'] & \Gamma\arrow[u,"\iota_\Gamma"].
\end{tikzcd}
$$
\begin{lem}\label{lie group homomorphism from SONO to SOPO}
The extension $\alpha$ is a Lie group embedding. 
\end{lem}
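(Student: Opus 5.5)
The plan is to show that $\ker\alpha$ is trivial and that $\alpha$ is an immersion; together these say $\alpha$ is an injective Lie group homomorphism, i.e.\ an embedding. Everything rests on the fact that $\SONO$ is connected and simple with $n\geq 2$. More precisely, its Lie algebra $\mathfrak{so}(n,1)$ is simple, and its center is trivial; for $n$ odd one should double-check the center of the identity component, but the argument below only needs discreteness of central subgroups.

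\textbf{Step 1: the kernel is discrete.} The kernel $\ker\alpha$ is a closed normal subgroup of $\SONO$, so its Lie algebra $\ker d\alpha$ is an ideal of $\mathfrak{so}(n,1)$. By simplicity this ideal is either $0$ or all of $\mathfrak{so}(n,1)$.

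\textbf{Step 2: the kernel is not everything.} If $\ker d\alpha$ were all of $\mathfrak{so}(n,1)$, then $\alpha$ would be trivial on the identity component, which is the whole group. In particular $\alpha$ would be trivial on $H$. But $\alpha\circ\iota_H=\iota_\Gamma\circ\iota_*$ is injective and $H$ is nontrivial, being a cocompact lattice. This is a contradiction, so $d\alpha$ is injective and $\alpha$ is an immersion.

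\textbf{Step 3: the kernel is trivial.} By Step 1, $\ker\alpha$ is a discrete normal subgroup of a connected group, hence central. The center of $\SONO$ is trivial (when $n+1$ is even one checks that $-I\notin\SONO$ for the Lorentzian form, since $-I$ swaps the two sheets of the hyperboloid). Therefore $\ker\alpha=\{1\}$.

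Alternatively, Step 3 can be done using only the lattice. Since $\ker\alpha$ is a discrete normal subgroup, $H\cap\ker\alpha$ is trivial by injectivity of $\iota_*$. Also, $\ker\alpha$ is centralized by $H$, which is Zariski dense by Borel density. This again forces $\ker\alpha$ to be central, hence trivial.

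The only step that needs real care is the center computation for the identity component. Falling back on Borel density together with the injectivity on $H$ avoids that computation. Finally, an injective Lie group homomorphism with injective differential is an injective immersion, which is what "embedding" means for Lie subgroups. If a closed embedding is wanted, we can add that $\alpha(\SONO)$ is a connected semisimple subgroup of $\SOPO$ and is therefore closed.
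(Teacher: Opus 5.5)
Your proposal is correct and is essentially the paper's argument: simplicity of $\mathfrak{so}(n,1)$ makes $\ker d\alpha$ either $0$ or everything, nontriviality of $\alpha$ on the lattice $H$ rules out the latter, and the resulting discrete (hence central) kernel is trivial because $\SONO$ is center-free, which both you and the paper check via $-I$. One caution: your Borel-density aside does not actually avoid the center computation, so you should drop it. A discrete normal subgroup of a connected group is already central, and passing from central to trivial still requires $Z(\SONO)=\{1\}$, since injectivity on $H$ alone cannot exclude a central kernel that meets $H$ trivially.
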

\begin{proof}
	A Lie group homomorphism $\alpha$ between Lie groups $G$ and $G'$ induces a Lie algebra homomorphism $d\alpha$. Moreover, when $G$ and $G'$ are connected, $\alpha$ is determined by $d\alpha$ (see, e.g., [\citenum{kaLieGroupBeyond}, 1.10]). 
The Lie algebras $\mathfrak{so}(n,1)$ are simple ([\citenum{kaLieGroupBeyond}, Theorem 6.105]). Thus any closed normal subgroup of $SO(n,1)^\circ$ is either trivial, finite central, or all of $SO(n,1)^\circ$. If \(n\) is even, \(-I \) is not in \(SO(n,1)\).
If \(n\) is odd, $-I$ is in $ SO(n,1)$, but not in $SO(n,1)^\circ$. Thus in both cases, $SO(n,1)^\circ$ is center-free. Since $\alpha \circ \iota_H(H)=\iota_*\circ \iota_\Gamma(H)$, $\ker\alpha$ must be $\{e\}$ and the differential
$
	d\alpha_e \colon  \mathfrak{so}(n,1) \to \mathfrak{so}(n+1,1)
$
	is a nonzero Lie algebra homomorphism. Since $\mathfrak{so}(n,1)$ is simple, any nonzero Lie algebra homomorphism is injective. Hence we obtain an embedding
	$\mathfrak{so}(n,1)\hookrightarrow\mathfrak{so}(n+1,1)$ and an embedding of $\SONO$ to $\SOPO$. 
\end{proof}
The rest of the proof follows from the idea of proof of [\citenum{mdArithmeticGroups}, (16.2.5) Proposition]. We provide some more details for readers unfamiliar with the connection between invariance under Cartan involution and totally geodesic submanifold. 

Let \(S = H\backslash G/K\), \(M = \Gamma \backslash G'/K'\) be symmetric spaces, where \(G,G'\) are center–free semisimple Lie groups with no compact factors, and \(K\subset G\), \(K'\subset G'\) are maximal compact subgroups. Let \(\mathfrak g = \mathrm{Lie}(G)\), \(\mathfrak g' = \mathrm{Lie}(G')\), and 
$
\theta\colon \mathfrak g\to\mathfrak g,
\theta'\colon \mathfrak g'\to\mathfrak g'
$
 Cartan involutions with fixed–point sets \(\mathfrak k = \mathrm{Lie}(K)\) and \(\mathfrak k' = \mathrm{Lie}(K')\). Then we have Cartan decompositions
$
\mathfrak g = \mathfrak k \oplus \mathfrak p,
\mathfrak g' = \mathfrak k' \oplus \mathfrak p',
$
with \(\theta|_{\mathfrak k} = +\mathrm{Id}\), \(\theta|_{\mathfrak p} = -\mathrm{Id}\), and similarly for \(\theta'\). Suppose there is a continuous embedding $\alpha\colon G \to G'$ and an injective homomorphism $\iota_*\colon H \to \Gamma$. 
\begin{lem}
	After possibly conjugating \(\alpha \) by an element of \(G'\), we have
	\[
	d\alpha _e\circ \theta
	=
	\theta'\circ d\alpha _e \tn{ or equivalently } d\alpha _e(\mathfrak k)\subset \mathfrak k',
	d\alpha _e(\mathfrak p)\subset \mathfrak p'.
	\]
\end{lem}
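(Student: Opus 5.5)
The plan is to use the standard fact that Cartan involutions of a semisimple Lie algebra are all conjugate under the inner automorphism group, combined with the existence of a Cartan involution of $\mathfrak g'$ that preserves the subalgebra $d\alpha_e(\mathfrak g)$. Set $\mathfrak h := d\alpha_e(\mathfrak g)\subset \mathfrak g'$. By \Cref{lie group homomorphism from SONO to SOPO}, $d\alpha_e$ is injective, so $\mathfrak h\cong \mathfrak g$ is semisimple. The first key input is the theorem of Mostow (see also Karpelevich): a semisimple subalgebra $\mathfrak h$ of a real semisimple Lie algebra $\mathfrak g'$ is \emph{compatible} with some Cartan involution $\theta''$ of $\mathfrak g'$. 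Compatibility means $\theta''(\mathfrak h)=\mathfrak h$, and then $\theta''|_{\mathfrak h}$ is a Cartan involution of $\mathfrak h$.

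Geometrically, this is the statement that the orbit $\alpha(G)\cdot o$ of a suitable basepoint $o\in G'/K'$ is a totally geodesic copy of $G/K$. One can prove it directly by letting the compact group $\alpha(K)$ act on the CAT(0) space $G'/K'$ and taking a fixed point $o$, for instance the circumcenter of an orbit (Cartan fixed point theorem). In our case $G=\SONO$ and $G'=\SOPO$, and everything can also be checked by hand with the block embedding.

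Next, Cartan involutions of $\mathfrak g'$ are all conjugate by $\mathrm{Ad}(g)$ for some $g\in G'$, since $G'$ acts transitively on $G'/K'$, whose points correspond to maximal compact subgroups. Choose $g$ with $\mathrm{Ad}(g)\circ\theta''\circ \mathrm{Ad}(g)^{-1}=\theta'$, and replace $\alpha$ by $c_g\circ\alpha$. This replaces $\mathfrak h$ by $\mathrm{Ad}(g)\mathfrak h$, which is $\theta'$-stable, and $\theta'|$ restricts to a Cartan involution of it.

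Finally, $\theta_1:=d\alpha_e^{-1}\circ\theta'\circ d\alpha_e$ is a Cartan involution of $\mathfrak g$. Cartan involutions are again conjugate, so $\theta_1=\mathrm{Ad}(h)\theta\,\mathrm{Ad}(h)^{-1}$ for some $h\in G$. Replacing $\alpha$ by $\alpha\circ c_h$, equivalently conjugating further by $\alpha(h)\in G'$, gives $d\alpha_e\circ\theta=\theta'\circ d\alpha_e$. The equivalence with $d\alpha_e(\mathfrak k)\subset\mathfrak k'$ and $d\alpha_e(\mathfrak p)\subset\mathfrak p'$ is immediate from the $\pm1$ eigenspace descriptions: if the intertwining holds, then eigenspaces map to eigenspaces; conversely, if both inclusions hold, then the two sides agree on $\mathfrak k$ and on $\mathfrak p$.

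The main obstacle is the compatibility step, i.e. finding a $\theta''$ preserving $\mathfrak h$. I would handle it via the fixed point argument: the fixed point $o$ of $\alpha(K)$ gives $K'':=\mathrm{Stab}(o)\supset\alpha(K)$, so $d\alpha_e(\mathfrak k)\subset\mathfrak k''$. To get $d\alpha_e(\mathfrak p)\subset\mathfrak p''$ as well, I would use that $\mathfrak p$ is an irreducible $K$-module here. Its image then lies in the $\alpha(K)$-invariant complement of $\mathfrak k''\cap\mathfrak h$ in $\mathfrak h$. One still has to rule out a mixed component in $\mathfrak k''$, and I would do this by decomposing $\mathfrak g'$ explicitly as a $\mathfrak g$-module: $\mathfrak{so}(n+1,1)=\mathfrak{so}(n,1)\oplus\R^{n,1}$. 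Alternatively, one can simply cite Mostow's theorem.
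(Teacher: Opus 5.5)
\textbf{Verdict.} Your main argument is correct. It follows the same skeleton as the paper's proof:
\begin{itemize}
\item obtain a Cartan involution of $\mathfrak g'$ that preserves $d\alpha_e(\mathfrak g)$ and restricts to a Cartan involution there;
\item move it to $\theta'$ by $\mathrm{Ad}(G')$;
\item match $\theta$ using conjugacy of Cartan involutions of $\mathfrak g$, realized as conjugation by $\alpha(h)\in G'$. You write this last step out more cleanly than the paper does.
\end{itemize}
The real difference is the compatibility step. The paper first conjugates so that $\alpha(K)\subset K'$. It then simply asserts that the restriction of a Cartan involution of $\mathfrak g'$ to $d\alpha_e(\mathfrak g)$ is a Cartan involution. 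You instead invoke the Karpelevich--Mostow theorem, which is exactly the input needed. So your version closes a step the paper leaves unjustified.

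\textbf{The fallback sketch fails.} It fails for the same reason that $\alpha(K)\subset K'$ alone does not suffice in the paper. A fixed point of $\alpha(K)$ need not give a compatible Cartan involution, and the mixed $\mathfrak k''$-component cannot be ruled out, because it genuinely occurs.

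Take the block embedding. The fixed set of $\alpha(K)\cong\mathrm{SO}(n)$ in $\mathbb H^{n+1}$ is a whole geodesic $\ell$ meeting the invariant hyperplane $P_0$ orthogonally. The circumcenter of an $\alpha(K)$-orbit of $x$ is just the projection of $x$ to $\ell$. For $o\in\ell\setminus P_0$ the following hold.
\begin{itemize}
\item The orbit $\alpha(G)\cdot o$ is an equidistant hypersurface, not a totally geodesic one.
\item The geodesic symmetry at $o$ carries $P_0$ to a different hyperplane.
\item Hence $\theta_o$ sends $d\alpha_e(\mathfrak g)$, the Lie algebra of the stabilizer of $P_0$, to a different subalgebra.
\end{itemize}
Irreducibility of $\mathfrak p$ cannot detect this. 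As an $\mathrm{SO}(n)$-module, $\mathfrak k''\cong\mathfrak{so}(n+1)\cong\mathfrak{so}(n)\oplus\mathbb R^n$ contains a copy of $\mathfrak p$. So the fixed point has to be chosen well, and that choice is precisely the content of Mostow's theorem.

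\textbf{Avoiding the citation.} If you want to avoid citing Mostow in this special case, do the following. Show that the standard representation $\mathbb R^{n+1,1}$, pulled back by $d\alpha_e$, splits as $\mathbb R^{n,1}\oplus\mathbb R$ with spacelike trivial summand. For small $n$ you must exclude spinor-type summands, for example the realified $\mathbb C^2$ of $\mathfrak{so}(3,1)\cong\mathfrak{sl}_2(\mathbb C)$, which carries no invariant symmetric form; do this by checking invariant symmetric forms and their signatures. Then $\alpha(G)$ preserves a totally geodesic hyperplane and acts transitively on it. Taking $o$ on that hyperplane gives the compatible Cartan involution.
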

\begin{proof}
	By [\citenum{kaLieGroupBeyond}, Proposition 6.61.], any two maximal compact subgroups in a semisimple Lie group are conjugate. By conjugating \(\alpha \) by an element of \(G'\), we have \(\alpha (K)\subset K'\), i.e., $
	d\alpha _e(\mathfrak k)\subset \mathfrak k'.
	$
	
Let \(\tilde\theta'\) be a Cartan involution of \(\mathfrak g'\) whose fixed subgroup is conjugate to \(K'\). The restriction of \(\tilde\theta'\) to the subalgebra \(d\alpha _e(\mathfrak g)\) is itself a Cartan involution. On the domain \(\mathfrak g\), we have \(\theta\). By [\citenum{kaLieGroupBeyond}, Corollary 6.19], any two Cartan involutions on $\mathfrak g$ are conjugate via inner automorphisms.  Thus we can conjugate \(d\alpha _e(\mathfrak g)\) inside \(\mathfrak g'\) so that \(\tilde\theta'|_{d\alpha _e(\mathfrak g)}\) matches the pushforward of \(\theta\), since $\alpha(K)$ is a subset of $K'$. 
	Lifting this conjugation to \(G'\) gives the desired adjustment of \(\alpha \) by an inner automorphism of \(G'\).
	
	Since $G$ is connected, by exponentiating, we have $
	\theta'(\alpha (g)) = \alpha (\theta(g))$ for any $ g\in G $, which implies that $ \theta'(\alpha (G)) \subset \alpha (G).
$ 
	Since \(\theta'\) is an involution (and therefore bijective), 
	$
	\theta'(\alpha (G)) = \alpha (G).
	$
\end{proof}
	Here \(\theta'\)-invariant subgroups correspond to totally geodesic, symmetric subspaces of the target symmetric space \(G'/K'\), which is why \(\alpha \) induces a totally geodesic embedding of \(G/K\) into \(G'/K'=M\).
Let \(U \subset G'\) be a connected Lie subgroup such that
\(\theta'(U) = U\) and  \(K_U := U \cap K'\) is compact (it will be a maximal compact subgroup of \(U\)).

We claim that the natural map
\[
U/K_U \longrightarrow G'/K',\quad hK_U \mapsto hK'
\]
is a totally geodesic isometric immersion of Riemannian symmetric spaces.

The tangent space at the basepoint \(o = eK'\in X'\) can be identified with
$
T_o X' \cong \mathfrak p'.
$
By assumption, \(\mathfrak u = \mathrm{Lie}(U)\) is \(\theta'\)-invariant as a subalgebra of \(\mathfrak g'\), so
$
\mathfrak u = \mathfrak k_U \oplus \mathfrak p_U,
$
where
$
\mathfrak k_U := \mathfrak u \cap \mathfrak k',\quad
\mathfrak p_U := \mathfrak u \cap \mathfrak p'.
$
This is the Cartan decomposition of \(\mathfrak u\) corresponding to the symmetric pair \((U,K_U)\). The symmetric space of \(U\) with respect to this Cartan involution is
$
X_U = U/K_U,
$
whose tangent space at the basepoint \(o_U = eK_U\) is
$
T_{o_U} X_U \cong \mathfrak p_U.
$

Under the natural map
$
\iota \colon  U/K_U \longrightarrow G'/K',\quad hK_U \mapsto hK',
$
the derivative at the basepoint identifies
\[
d\iota_{o_U} \colon  \mathfrak p_U \hookrightarrow \mathfrak p'.
\]
Thus \(\iota\) is an isometric immersion at the basepoint (for the standard symmetric space metrics).

In \(X' = G'/K'\), geodesics through the basepoint \(o\) are exactly the curves
\[
\gamma_X(t) = \exp(tX)\cdot o,\quad X\in \mathfrak p'.
\]
Similarly, geodesics in \(X_U = U/K_U\) through its basepoint are
$
\gamma_Y^U(t) = \exp(tY)\cdot o_U, Y\in \mathfrak p_U.
$

Because \(\mathfrak p_U \subset \mathfrak p'\) and exponentials in \(U\) and \(G'\) coincide on \(\mathfrak u\),
\[
\exp_U(Y) = \exp_{G'}(Y)\quad\text{for }Y\in\mathfrak u,
\]
we have
$
\iota\bigl(\gamma_Y^U(t)\bigr)
= \iota\bigl(\exp(tY)\cdot o_U\bigr)
= \exp(tY)\cdot o,
$
and the right-hand side is a geodesic in \(X'\). Thus every geodesic in \(X_U\) through the basepoint is carried by \(\iota\) to a geodesic in \(X'\). 
This proves that \(\iota\) is an isometric immersion at all points, and it preserves geodesics.

By \Cref{supporting real hyperbolic}, each such $H$ is a cocompact lattice in $SO(n,1)^\circ$. By \Cref{non totally geodesic}, the corresponding immersion is not homotopic to any totally geodesic hypersurface of $M$. Hence $\iota_*$ cannot extend to a Lie group homomorphism.
Finally, by the construction earlier in this section, there are infinitely many arithmetic and infinitely many non-arithmetic cocompact lattices in $SO(n,1)^\circ$. This completes the proof of \Cref{non superrigid}.

\end{proof}
\subsection{Some questions}
\begin{q}
	Are there minimal hypersurfaces in the homotopic class of a \poi hypersurface in a hyperbolic manifold? Essentially, generalizing \cite{syExistIncomMinTopo,suMinimalImmersionClosedRiemann}\
to higher-dimensional hyperbolic manifolds. 
\end{q}

\begin{q}
	What is an upper bound on the growth rate of \poi hypersurfaces in a finite-volume hyperbolic manifold? \cite{kmCountingEssentialSurfaces} counts the number of homotopy classes of \poi surfaces in a closed hyperbolic $3$-manifold, by realizing every surface as a pleated surface (piecewise hyperbolic structure), and discretizing both the surfaces and the manifold as graphs. There is a clean description of thick graphs which are $\pi_1$-isomorphic to the surface. 
	Moreover, it is not clear whether there exist \poi immersions of Gromov-Thurston type negatively curved manifolds (which do not support hyperbolic structure) into hyperbolic manifolds. 
\end{q}

\section*{Acknowledgment}
 We are grateful for Ben Lowe, Alan W. Reid, and Wenyuan Yang for many helpful discussions. We are in debt to \vl \mkv for suggesting \Cref{ag implies asymptotically dense} and most surfaces Jeremy Kahn and he construct in \cite{kmImmersingAlmostGeodesic} should be filling. 
This work initiated in a discussion between Mark Hagen and XHH when both were attending the conference ``Geometric Group Theory and Related Topics" at Peking University. We would like to thank Mark Hagen and the organizers of the conference for many inspirations.  
XHH benefits significantly from the reading seminars on \cite{ukclosedminSurfaceHyperbolic,cmnCountingminimal,kmsGeometricallyTopologicallyRandomsurface} at Yau Mathematical Sciences Center at Tsinghua University, mentored by Yunhui Wu. He is truly grateful for the participants. This work is partially completed when XHH visited California Institute of Technology, and he thanks the institute for its hospitality and thank Antoine Song for many helpful discussions. 
XHH is partially supported by the start-up grants at Shanghai Institute of Mathematics and Interdisciplinary Sciences and  NSFC No. 12501084.

\bibliographystyle{alpha}
\bibliography{9-Reference.bib}

\end{document}